\newtheorem{theorem}{Theorem}[section]
\newtheorem{corollary}[theorem]{Corollary}
\newtheorem{proposition}[theorem]{Proposition}
\theoremstyle{definition}
\newtheorem{definition}[theorem]{Definition}
\theoremstyle{remark}
\newtheorem{remark}[theorem]{Remark}
\theoremstyle{lemma}
\newtheorem{lemma}[theorem]{Lemma}
\theoremstyle{example}
\newtheorem{example}[theorem]{Example}
\newcommand{\scHom}{\mathscr{H}\text{\kern -5pt {\calligra\large om}}\,}
\newcommand{\shtensor}{\otimes_{\mathbf{sh}}}
\newcommand{\grtensor}{\otimes_{\mathbf{gr}}}
\newcommand{\R}{\mathbb{R}}
\DeclareMathOperator{\Hom}{Hom}
\DeclareMathOperator*{\colim}{colim}
\newcommand{\uHom}{\underline{\Hom}}
\newcommand{\Homk}{\Hom_{\mathbf{k}}}
\newcommand{\cat}{\mathbf}
\newcommand{\Open}{\cat{Open}}
\newcommand{\shRMod}[1]{{#1}\text{-}\mathbf{Mod}}
\newcommand{\shModR}[1]{\mathbf{Mod}\text{-}{#1}}
\definecolor{ffqqqq}{rgb}{1,0,0}
\definecolor{NavyBlue}{rgb}{0.0,0.0,0.5}
\title{Convolution of Persistence Modules}
\author{Nikola Mili\'cevi\'c}
\tikzset{%
    symbol/.style={%
        draw=none,
        every to/.append style={%
            edge node={node [sloped, allow upside down, auto=false]{$#1$}}}
    }
}
\begin{document}

\maketitle


\begin{abstract}
We conduct a study of real-valued multi-parameter persistence modules as sheaves and cosheaves. Using the recent work on the homological algebra for persistence modules, we define two different convolution operations between derived complexes of persistence modules. We show that one of these operations is canonically isomorphic to the derived tensor product of graded modules. We give formulas for computing convolutions between single-parameter interval decomposable modules. Our convolution operations are analogous to the convolution of derived complexes of sheaves on $\mathbb{R}^n$ introduced by Schapira and Kashiwara. In our setting, $\mathbb{R}^n$ has a non-standard topology. We show our convolution operation satisfies analogous properties to the convolution of sheaves on $\mathbb{R}^n$ with the standard topology. We define a convolution distance for derived complexes of persistence modules and show that it extends the classical interleaving distance. We also prove stability results from the sheaf and cosheaf points of view. 
\end{abstract}

\section{Introduction}
Sheaves and cosheaves have found many applications in data science problems of the local-to-global character \cite{ghrist2011applications,hansen2019distributed,hansen2020opinion,robinson2013understanding,robinson2014topological,robinson2017sheaves}. A common perspective in applications is to study sheaves and cosheaves on partially ordered sets often valued in vector spaces over a field $\mathbf{k}$. The thesis work of Curry \cite{curry2013sheaves} showed that functors from a partially ordered set into a ``nice" category are equivalent to sheaves and cosheaves on open and closed sets of the Alexandrov topology \cite{alexandrov1998combinatorial} on the partially ordered set, respectively, valued in said category. An example of this are cellular sheaves and cosheaves \cite{curry2013sheaves,curry2018dualities}. On an arbitrary topological space sheaf cohomology is well defined and studied in the derived setting for any sheaf. On the other hand, cosheaf homology is only defined for constant or locally constant cosheaves. However, on finite partially ordered sets one can construct a rich sheaf cohomology and cosheaf homology theory in the framework of derived functors for any sheaf and cosheaf  \cite{sanchez2020homology}. One can even study entropy and information theory from this point of view \cite{baudot2015homological}.  
  
A particular area of interest is topological data analysis, where sheaves have found applications \cite{de2016categorified,brown2017sheaf,
yoon2020persistence,kashiwara2018persistent,
berkouk2018derived,bubenik2019homological}. In topological data analysis, data is often encoded as a diagram of topological spaces. An appropriate homology functor with field coefficients is applied to obtain a diagram of vector spaces. When data is parametrized by a number of real variables, this diagram of vector spaces is indexed by $\R^n$ and is called a (real-valued multi-parameter) persistence module.   
  
Even though persistence modules have been studied extensively, it is a relatively recent endeavor to study them using homological algebra techniques \cite{miller2020homological,bubenik2019homological,
botnan2020relative,berkouk2019stable,gakhar2019k,
carlsson2020persistent,govc2020persistent}. This work reconsiders classical results such as the defininition of interleaving distance and its stability theorems from the homological algebra perspective. We consider persistence modules from the sheaf and cosheaf points of view. Motivated by the work done by Schapira and Kashiwara in \cite{kashiwara2018persistent}, where a convolution operation derived complexes of sheaves of vector spaces is used to define a distance, we define two different operations for derived complexes of persistence modules; the sheaf and cosheaf convolutions. We show these convolution operations satisfy properties analogous to the original definition. We use the convolutions to define two distances for derived complexes of persistence modules, that turn out to be equivalent. Furthermore, when restricted to persistence modules thought of as complexes concentrated in degree $0$, the convolution distance recovers the classical interleaving distance of persistence modules. In particular, if we denote by $d_C$ the convolution distance between $M$ and $N$ and by $d_I$ the  interleaving distance of persistence modules we have the following result:

\begin{theorem}
Let $M$ and $N$ be persistence modules thought of as complexes concentrated in degree $0$, then $d_C(M,N)=d_I(M,N)$.
\end{theorem}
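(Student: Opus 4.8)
The plan is to prove the equality $d_C(M,N)=d_I(M,N)$ by a two-sided inequality, showing that an $\varepsilon$-interleaving of persistence modules is the same data as an $\varepsilon$-isomorphism in the convolution sense, and conversely. The essential point is that the convolution distance is built by convolving a complex with a ``fattening'' kernel associated to the parameter $\varepsilon$, and that for modules concentrated in degree $0$ this fattening should reduce to the shift functor $M \mapsto M(\varepsilon)$ (the $\varepsilon$-translate of the persistence module along the diagonal of $\R^n$). Concretely, I expect the convolution with the distinguished kernel $K_\varepsilon$ to be canonically isomorphic to the $\varepsilon$-shift, so that a $2\varepsilon$-roundtrip in the convolution framework unwinds precisely into the pair of interleaving morphisms satisfying the usual triangle/compatibility identities.

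\medskip

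First I would recall the definition of $d_I$: an $\varepsilon$-interleaving between $M$ and $N$ consists of morphisms $M \to N(\varepsilon)$ and $N \to M(\varepsilon)$ whose composites (after shifting) recover the internal structure morphisms $M \to M(2\varepsilon)$ and $N \to N(2\varepsilon)$, and $d_I(M,N)$ is the infimum of such $\varepsilon$. Next I would unpack the definition of $d_C$ given earlier in the paper, identifying the convolution kernel that implements an $\varepsilon$-perturbation and verifying the key lemma that, on a module placed in cohomological degree $0$, convolving by this kernel is naturally isomorphic to the shift functor. This is the technical heart: I would need to compute the convolution explicitly on degree-$0$ objects and check that no higher cohomology is introduced, so that the derived convolution collapses to the underived shift. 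Here the results asserted in the abstract — in particular that one convolution operation is canonically isomorphic to the derived tensor product of graded modules — should give a clean handle on this computation.

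\medskip

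With that lemma in hand, the two inequalities follow formally. For $d_C(M,N) \le d_I(M,N)$, I would take an $\varepsilon$-interleaving and directly read off from it an $\varepsilon$-isomorphism between the convolved complexes, using the identification of convolution with the shift to translate the interleaving triangle identities into the data witnessing closeness in $d_C$. For the reverse inequality $d_I(M,N) \le d_C(M,N)$, I would start from a witness of $d_C$-closeness, apply the degree-$0$ truncation (cohomology $H^0$, which is exact enough in the relevant range since both objects are concentrated in degree $0$), and recover honest interleaving morphisms between $M$ and $N$; one must check that passing to $H^0$ preserves the compatibility conditions and does not lose the factorization through the structure maps.

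\medskip

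The main obstacle I anticipate is the reverse inequality, specifically controlling what happens to higher cohomology when extracting an interleaving from a convolution-distance witness: a priori a complex that is $\varepsilon$-close to $M$ in $d_C$ need not itself be concentrated in degree $0$, so the truncation step must be justified carefully, and one must ensure the recovered morphisms genuinely satisfy the interleaving identities rather than merely identities up to higher homotopy. I expect this to hinge on the precise definition of $d_C$ (whether it quantifies over all complexes or is realized by a canonical kernel) and on the naturality of the convolution-equals-shift isomorphism, which guarantees that the homotopies assembling the convolution witness descend to strict equalities after applying $H^0$.
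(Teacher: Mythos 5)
Your proposal is correct and follows essentially the same route as the paper: the technical heart you identify is exactly the paper's key lemma that convolving with the kernel ($\mathbf{k}[U_{\bm{\epsilon}}]$, which is $\grtensor$-flat, resp.\ $\mathbf{k}[D_{\bm{\epsilon}}]$, which is injective) introduces no higher derived terms and is canonically the shift $M\mapsto M(-\bm{\epsilon})$. The obstacle you anticipate in the reverse inequality does not arise, because the convolved objects are themselves concentrated in degree $0$ and the localization functor $\cat{Vect}_{\cat{k}}^{\cat{R}^n}\to D(\cat{Vect}_{\cat{k}}^{\cat{R}^n})$ is fully faithful, so the derived-category morphisms and composition identities witnessing $\epsilon$-isomorphism are literally module morphisms satisfying the interleaving identities --- which is precisely the paper's stated argument.
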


The original motivation for this work, were two bifunctors of persistence modules, $\grtensor$ and $\uHom$, the graded module tensor product and its adjoint internal hom,  whose homological algebra was studied with detail in \cite{bubenik2019homological}. In sheaf theory, the six Grothendieck operations are well known and often a functor of interest is canonically isomorphic to some combination of them \cite{kashiwara1990sheaves}. In this work we show that $\grtensor$ is canonically isomorphic to the composition of the external tensor product, $\boxtimes$, and a direct image functor of cosheaves. Similarly, we show that $\uHom$ is canonically isomorphic to a composition of inverse image functors, the sheaf hom, $\scHom$, and a direct image functors of sheaves. More specifically we have the following two theorems:

\begin{theorem}
Let $M$ and $N$ be bounded derived complexes of persistence modules. We define the cosheaf convolution of $M$ and $N$, $M\bullet^LN:=Ls_{\dagger}(M\boxtimes N)$. There exists a canonical isomorphism
\[M\bullet^LN\cong M\grtensor^LN.\] 
\end{theorem}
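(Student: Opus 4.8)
The plan is to pass from the cosheaf side to the underlying diagram category and recognize $Ls_{\dagger}$ as a derived left Kan extension, after which the statement reduces to the defining colimit formula for $\grtensor$. Under the equivalence between persistence modules and cosheaves on $\R^n$ with its Alexandrov topology, a persistence module $M$ is simply the covariant functor $M\colon\R^n\to\Vectk$, and the cosheaf direct image $s_{\dagger}$ is the left adjoint of the inverse image $s^{-1}$. Since for a monotone map the inverse image is just restriction of functors $s^{*}$, which is exact and admits both adjoints, its left adjoint on the level of functors is precisely the left Kan extension $\Lan_s$ along the sum map $s\colon\R^n\times\R^n\to\R^n$, $s(a,b)=a+b$. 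I would therefore first record the identification $s_{\dagger}\isom\Lan_s$ and pass to derived functors to obtain $Ls_{\dagger}\isom L\Lan_s$.

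Next I would compute the underived left Kan extension explicitly. The pointwise formula for $\Lan_s$ evaluated at $t\in\R^n$ is the colimit over the comma category $s/t=\{(a,b):a+b\le t\}$, so that
\[ \Lan_s(M\boxtimes N)(t)\;\isom\;\colim_{a+b\le t} M(a)\ktensor N(b). \]
This is exactly the colimit that defines the graded tensor product $M\grtensor N$, yielding a canonical isomorphism $\Lan_s(M\boxtimes N)\isom M\grtensor N$ at the underived level. Here I use that, because $\mathbf{k}$ is a field, the external tensor product $M\boxtimes N$ is computed by the objectwise tensor $(a,b)\mapsto M(a)\ktensor N(b)$ with no correction term.

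Finally I would promote this to the derived statement. Choose projective (free) resolutions $P_\bullet\to M$ and $Q_\bullet\to N$ by persistence modules that are direct sums of representables. Because $\mathbf{k}$ is a field, $-\boxtimes-$ is exact in each variable and a K\"unneth argument shows that $P_\bullet\boxtimes Q_\bullet\to M\boxtimes N$ is again a projective resolution. The key observation is that $\Lan_s$ preserves projectives: it is left adjoint to the exact functor $s^{*}$, and on representables it satisfies $\Lan_s(P_a\boxtimes P_b)\isom P_{a+b}$, which is again projective. Consequently $P_\bullet\boxtimes Q_\bullet$ is $Ls_{\dagger}$-acyclic, and
\[ Ls_{\dagger}(M\boxtimes N)\;\isom\;s_{\dagger}(P_\bullet\boxtimes Q_\bullet)\;\isom\;\Lan_s(P_\bullet\boxtimes Q_\bullet)\;\isom\;P_\bullet\grtensor Q_\bullet\;\isom\;M\grtensor^L N, \]
where the last isomorphism is the definition of the derived graded tensor product from \cite{bubenik2019homological}. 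Tracking the naturality of each identification yields the canonical isomorphism.

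I expect the main obstacle to be the first step: rigorously matching the cosheaf-theoretic direct image $s_{\dagger}$ with the left Kan extension $\Lan_s$, and then checking that this identification is compatible with passage to bounded derived categories, so that $Ls_{\dagger}$ really is computed by projective resolutions of the above form. The subsidiary technical point is verifying that the external tensor product of free persistence modules is free with the expected index $a+b$, so that $P_\bullet\boxtimes Q_\bullet$ is simultaneously a resolution of $M\boxtimes N$ and adapted to $s_{\dagger}$; once this is in place, the comparison of the two colimit formulas is routine.
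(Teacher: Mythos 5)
Your proposal is correct and follows essentially the same route as the paper: the heart of both arguments is the pointwise computation identifying the costalk $(s_{\dagger}(M\boxtimes N))_x=(M\boxtimes N)(s^{-1}(D_x))\cong\colim_{a+b\le x}(M_a\otimes_{\mathbf{k}}N_b)$ with the defining colimit of $(M\grtensor N)_x$ (your left Kan extension formula is exactly this computation in different clothing). The only divergence is in the derived upgrade, where the paper simply invokes the uniqueness property of derived functors while you verify acyclicity of $P_\bullet\boxtimes Q_\bullet$ explicitly via representables; your version spells out a detail the paper leaves implicit, and both are valid.
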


\begin{theorem}
Let $M$ and $N$ be bounded derived complexes of persistence modules. We define $R\scHom^*(M,N):=R\pi_{2*}\scHom(\pi_1^{-1}M,s^{-1}N)$.  There exists a canonical isomorphism
\[R\scHom^*(M,N)\cong R\uHom (M,N).\]
\end{theorem}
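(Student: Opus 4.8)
The plan is to prove the final theorem, the canonical isomorphism
\[R\scHom^*(M,N)\cong R\uHom(M,N),\]
by the same adjunction-and-reduction strategy that the previous theorem uses for the tensor side, since $\uHom$ is the adjoint of $\grtensor$ and $R\scHom^*$ should be the adjoint of $M\bullet^LN=Ls_{\dagger}(M\boxtimes N)$.

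I would proceed in stages. First I would unwind the definition $R\scHom^*(M,N)=R\pi_{2*}\scHom(\pi_1^{-1}M,s^{-1}N)$, identifying each of the functors involved: $\pi_1,\pi_2\colon \R^n\times\R^n\to\R^n$ are the two projections, $s\colon \R^n\times\R^n\to\R^n$ is the sum map, $\scHom$ is the internal sheaf-hom on the product, and $R\pi_{2*}$ is the derived direct image. Because everything is built from the Grothendieck operations, the natural move is to verify a tensor-hom style adjunction at the level of derived complexes of persistence modules: for bounded complexes $M,N,P$ one should have
\[\Hom\bigl(P,R\scHom^*(M,N)\bigr)\cong \Hom\bigl(P\bullet^LN\text{-type expression},\,N\bigr),\]
and then combine this with the analogous tensor-hom adjunction between $\grtensor^L$ and $R\uHom$ established for $\grtensor$ in \cite{bubenik2019homological}. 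If $\bullet^L\cong\grtensor^L$ (the previous theorem) and the two adjoints agree, Yoneda forces the desired isomorphism. Concretely, the key steps in order are: (1) establish the adjunctions $(\pi_1^{-1},R\pi_{1*})$, $(s^{-1},Rs_*)$ and dually $(Ls_{\dagger},s^{-1})$ in this non-standard (Alexandrov-type) topology on $\R^n$; (2) use the projection/base-change compatibilities of $\scHom$ with $\boxtimes$ to rewrite $R\scHom^*(M,N)$ as the right adjoint of $-\bullet^LN$; (3) invoke the preceding theorem to replace $\bullet^L$ by $\grtensor^L$; and (4) conclude by uniqueness of adjoints that $R\scHom^*(M,N)\cong R\uHom(M,N)$.

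The main obstacle I expect is step (1)–(2): verifying that the sheaf-theoretic adjunctions and the interplay between $\scHom$, $\pi_i^{-1}$, $s^{-1}$, and the direct images behave as in the classical Kashiwara–Schapira setting \emph{despite} the non-standard topology on $\R^n$. In particular, one must confirm that $s$ and the $\pi_i$ are suitably well-behaved (e.g.\ that $Rs_*$ or $Ls_{\dagger}$ computes correctly and that a projection formula / base change holds) so that the internal-hom adjunction survives passage to the derived category. A secondary technical point is checking that the complexes remain in the bounded derived category throughout, so that the adjunction isomorphisms are genuine isomorphisms of complexes and not merely valid after truncation. Once these compatibilities are in place, the Yoneda/uniqueness-of-adjoints argument in steps (3)–(4) is formal.
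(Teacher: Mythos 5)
Your route is genuinely different from the paper's. The paper proves this theorem by a direct stalkwise computation: since persistence modules are sheaves on an Alexandrov space, the stalk at $x$ is the value on the principal up-set $U_x$, so
\[
\scHom^*(M,N)_x\cong \scHom(\pi_1^{-1}M,s^{-1}N)(\pi_2^{-1}(U_x))=\Hom\bigl(\pi_1^{-1}M|_{\pi_2^{-1}(U_x)},\,s^{-1}N|_{\pi_2^{-1}(U_x)}\bigr),
\]
which it then unwinds into the limit $\lim_{c+d-a\ge x}\Hom_{\mathbf{k}}(M_a,N_{c+d})\cong \uHom(M,N)_x$, and the derived statement follows by uniqueness of derived functors. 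No adjunctions are needed at any point. Your plan instead identifies $R\scHom^*(M,-)$ and $R\uHom(M,-)$ as right adjoints of the same functor and invokes Yoneda; this is the standard Kashiwara--Schapira-style argument and would work, with the bonus that it produces the adjunction $\Hom(M\bullet^L N,P)\cong\Hom(N,R\scHom^*(M,P))$ as a byproduct. The paper's computation, by contrast, avoids every derived-adjunction technicality and only needs the elementary description of stalks, sections, and $\scHom$ on posets.

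Two concrete cautions. First, in the paper the adjunction you want to use as an \emph{input} is stated only as a \emph{corollary} of this theorem (combined with the known adjunction $-\grtensor M\dashv\uHom(M,-)$ from \cite{bubenik2019homological}), so you cannot cite it; you must build it independently from the chain
\[
\Hom(s_{\dagger}(\pi_1^{-1}M\shtensor\pi_2^{-1}N),P)\cong\Hom(\pi_1^{-1}M\shtensor\pi_2^{-1}N,s^{-1}P)\cong\Hom(\pi_2^{-1}N,\scHom(\pi_1^{-1}M,s^{-1}P))\cong\Hom(N,\scHom^*(M,P)),
\]
using the cosheaf adjunction $s_{\dagger}\dashv s^{-1}$, the tensor--hom adjunction for sheaves of $\mathbf{k}$-vector spaces, and $\pi_2^{-1}\dashv\pi_{2*}$ (note the Hom-sets of sheaves and cosheaves coincide here because both are just natural transformations of functors on the poset, and $s^{-1}$ agrees for sheaves and cosheaves on preordered sets). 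This is exactly the part you defer as ``step (1)--(2),'' and it is the entire content of the proof; in particular you must check that each adjunction survives passage to $D^b$, e.g.\ that $Ls_{\dagger}$ is computed on projective resolutions while $R\pi_{2*}\scHom(\pi_1^{-1}M,s^{-1}(-))$ is computed on injective resolutions of $N$, and that the two resolutions can be matched inside a single Hom-set identity. Second, watch the variables: the chain above exhibits $\scHom^*(M,-)$ as right adjoint to $-\bullet M$ in the \emph{second} tensor factor, so you need the symmetry $M\bullet N\cong N\bullet M$ (or naturality in $N$ of $M\bullet N\cong M\grtensor N$ from the preceding theorem) before Yoneda applies. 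None of these points is fatal, but until the adjunction chain is actually verified your argument is a program rather than a proof, whereas the paper's computation is self-contained.
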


 We provide explicit formulas for interval modules arising from the persistent homology of sublevel sets of functions, for the sheaf and cosheaf convolutions. In applications, single-parameter persistence modules decompose into direct sums of finitely many such interval modules. Since the sheaf and cosheaf convolutions are additive functors and thus preserve finite direct sums, the general case reduces to that of interval modules.

\begin{proposition}
Let $M=\mathbf{k}[a,b)$ and $N=\mathbf{k}[c,d)$ be interval modules. The cohomology complex of the sheaf convolution $M*^RN$ is given by: 
\begin{itemize}
\item $(M*^RN)^0=\mathbf{k}[\max(a+d,b+c),b+d)$
\item $(M*^RN)^1=\mathbf{k}[a+c,\min(a+d,b+c))$
\item $(M*^RN)^i=0$ for $i\ge 2$.
\end{itemize}
The homology complex of the cosheaf convolution $M\bullet^{L}N$ is given by:
\begin{itemize}
\item $(M\bullet^LN)_0=\mathbf{k}[a+c,\min(a+d,b+c))$
\item $(M\bullet^LN)_1=\mathbf{k}[\max(a+d,b+c),b+d)$
\item $(M\bullet^LN)_i=0$ for $i\ge 2$.
\end{itemize}
\end{proposition}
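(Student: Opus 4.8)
The plan is to reduce everything to a single pair of interval modules, which is legitimate because both convolutions are additive and single-parameter modules decompose into intervals, and then to compute each convolution from an explicit short resolution. The concrete content lives in the cosheaf case; the sheaf case I would obtain by a dual argument.

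First I would treat the cosheaf convolution. By the theorem identifying $M\bullet^{L}N$ with $M\grtensor^{L}N$, it suffices to compute the derived graded tensor product. The interval module $M=\mathbf{k}[a,b)$ admits the canonical length-one free resolution
\[
0\longrightarrow \mathbf{k}[b,\infty)\xrightarrow{\ \iota\ }\mathbf{k}[a,\infty)\longrightarrow \mathbf{k}[a,b)\longrightarrow 0,
\]
where $\mathbf{k}[a,\infty)$ and $\mathbf{k}[b,\infty)$ are the free (hence flat) rank-one modules generated in degrees $a$ and $b$, and $\iota$ is the degree-$(b-a)$ shift inclusion. Since free modules are flat, $M\grtensor^{L}N$ is computed by applying $-\grtensor N$ to this resolution and dropping the $M$ term. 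Using the shift identity $\mathbf{k}[p,\infty)\grtensor\mathbf{k}[c,d)=\mathbf{k}[p+c,p+d)$, the resulting two-term complex is
\[
0\longrightarrow \mathbf{k}[b+c,b+d)\xrightarrow{\ f\ }\mathbf{k}[a+c,a+d)\longrightarrow 0,
\]
with $\mathbf{k}[b+c,b+d)$ placed in homological degree $1$ and $f$ the shift map induced by $\iota$. Hence $(M\bullet^{L}N)_{0}=\operatorname{coker}f$ and $(M\bullet^{L}N)_{1}=\ker f$.

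The key step is then to identify $f$. Both source and target are interval modules, $f$ is the canonical shift, and it is an isomorphism exactly on the overlap of the two supports, which is nonempty precisely when $b+c<a+d$. A short case analysis on the sign of $(a+d)-(b+c)$ shows that $\ker f$ is the interval $\mathbf{k}[\max(a+d,b+c),b+d)$ (the part of the source lying beyond the target) and $\operatorname{coker}f$ is the interval $\mathbf{k}[a+c,\min(a+d,b+c))$ (the part of the target lying below the image). Both the overlapping case $b+c<a+d$ and the disjoint case $b+c\ge a+d$, where $f=0$, are uniformly covered by these two formulas, which yields the stated homology complex. For the sheaf convolution I would argue dually: since $*^{R}$ is the sheaf-theoretic counterpart of $\bullet^{L}$, built from a derived sheaf pushforward rather than $Ls_{\dagger}$, the relevant resolution is now an injective (cofree) coresolution of $\mathbf{k}[a,b)$ dual to the free resolution above, or equivalently one applies the duality exchanging the sheaf and cosheaf descriptions. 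This duality sends intervals to intervals and interchanges kernels with cokernels while transposing homological and cohomological degree, so that $(M*^{R}N)^{0}=\mathbf{k}[\max(a+d,b+c),b+d)$ and $(M*^{R}N)^{1}=\mathbf{k}[a+c,\min(a+d,b+c))$, exactly the degree-transpose of the cosheaf answer.

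The hard part will be bookkeeping rather than conceptual: pinning down the induced map $f$ between the shifted intervals and verifying rigorously that it is an isomorphism precisely on the support overlap, including the boundary behaviour at the half-open endpoints, and, on the sheaf side, setting up the cofree coresolution or the duality functor carefully enough that the degree transposition is genuinely justified rather than merely asserted. Once $f$ is understood and the duality is in place, the two case analyses are routine.
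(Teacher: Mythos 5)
Your argument is correct and, on the cosheaf side, it is essentially the computation the paper delegates to the literature: the paper simply invokes Theorem \ref{theorem:convolution_of_cosheaves_is_graded_tensor} together with the known $\grtensor$-computations for interval modules, whereas you make that computation explicit via the flat resolution $0\to \mathbf{k}[b,\infty)\to\mathbf{k}[a,\infty)\to\mathbf{k}[a,b)\to 0$; your identification of $\ker f$ and $\operatorname{coker} f$ matches the stated answer. On the sheaf side the paper does use exactly your coresolution $0\to\mathbf{k}[a,b)\to\mathbf{k}(-\infty,b)\to\mathbf{k}(-\infty,a)\to 0$ to get $(M*^RN)^1$, but it obtains $(M*^RN)^0=M*N$ by a separate direct stalk computation, analyzing the limit $\lim_{s+t\ge x}M_s\otimes N_t$ antidiagonal by antidiagonal (Figure \ref{fig:2}); you instead read off degree $0$ as the kernel in the coresolved complex, which is cleaner but shifts the real work into computing $\mathbf{k}(-\infty,b)*\mathbf{k}[c,d)=\mathbf{k}[b+c,b+d)$. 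Two cautions there: first, Lemma \ref{lemma:thickening_alexandrov_sheaf} only covers principal (closed) down-sets $D_s=(-\infty,s]$, and the injectives in the coresolution are the open down-sets $(-\infty,b)$, so that convolution still requires a limit computation of the same flavor as the paper's Figure \ref{fig:2} argument (the answer happens to coincide with the principal case, but that must be checked, not assumed); second, your alternative phrasing via ``the duality exchanging the sheaf and cosheaf descriptions'' should not be leaned on, since no exact contravariant duality fixing the interval $[a,b)$ and exchanging $*^R$ with $\bullet^L$ is set up in this framework (a Matlis-type dual would reverse the poset and change the interval type). Sticking with the concrete injective coresolution, as you offer, makes the sheaf half sound.
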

\noindent

The paper is structured as follows. In \cref{section:background} we recall the necessary background for the rest of the paper. In \cref{section:convolution} we define the sheaf and cosheaf convolution for derived complexes of persistence modules and the induced convolution distance. We prove various properties of these functors and compute examples for single-parameter interval decomposable modules. In \cref{section:stability} we prove stability results for the convolution distance. In \cref{section:concluding_remarks} we summarize the work done in the paper and discuss possible future work. We also give a short background on sheaf theory in the Appendix.

\subsection*{Related work}
A distance on the derived complexes of persistence modules, developed from the homological algebra perspective has been considered recently by Berbouk in \cite{berkouk2019stable}. The convolution distance defined in this paper agrees with the derived interleaving distance defined in \cite{berkouk2019stable}. Berbouk in \cite{berkouk2019stable} relies on the translation functor for his definition, which turns out to be an exact functor. On the other hand, we show that we recover the translation functor by convolving with particular persistence modules. Thus, you can also find Theorem 1.1 in \cite{berkouk2019stable}. Our approach is therefore complementary and examines the homological algebra of different functors that can be used to define the same distance. 
Thanks to our (co)sheaf-theoretic approach we are able to state stability theorems for derived direct images of sheaves and cosheaves of vector spaces on a topological space $X$, relative to a map $f:X\to \mathbb{R}^n$. In \cite{berkouk2019ephemeral}, Berbouk and Petit study various derived interleaving distances on persistence modules, sheaves on $\mathbb{R}^n$ and $\gamma$ sheaves. Berbouk and Petit show isometry theorems between these distances. It is likely that some of our stability theorems can be recovered implicitly from the isometry theorems in \cite{berkouk2019ephemeral}. Our formulation of the functor $R\scHom^{*}$ drew inspiration from the functor $\scHom^{\star}$ introduced as a right adjoint to the convolution operation for derived complexes of sheaves by Tamarkin in \cite{tamarkin2013microlocal} and further studied by Schapira and Guillermou in \cite{guillermou2014microlocal}. However, even though analogous in symbols the underlying definition is different. 

\section{Background}
\label{section:background}
In this section we introduce the necessary background for the rest of the paper. We assume that the reader is familiar with basic definitions and facts about sheaves, cosheaves, homological algebra and derived functors, although some background will also be provided in the paper. For a more detailed exposition, see \cite{kashiwara1990sheaves,Bredon:SheafTheory}. We first introduce notation that will be used throughout the paper. 

\begin{itemize}
\item If $\cat{C}$ and $\cat{D}$ are two categories, we denote by $\cat{D}^{\cat{C}}$ the \emph{functor category} of functors $F:\cat{C}\to \cat{D}$ and natural transformations.
\item If $\mathbf{k}$ is a field, we denote by $\mathbf{Vect}_{\mathbf{k}}$ the category of $\mathbf{k}$-vector spaces and $\mathbf{k}$-linear maps. 
\item If $\cat{C}$ is a complete and cocomplete category, we denote by $\cat{Sh}(X;\cat{C})$ the category of $\cat{C}$-valued sheaves and by $\cat{CoSh}(X;\cat{C})$ the category of $\cat{C}$-valued cosheaves on a topological space $X$, respectively. If $\cat{k}$ is a field, for simplicity we denote by $\cat{Sh}(X;\cat{k})$ and by $\cat{CoSh}(X;\cat{k})$ the categories $\cat{Sh}(X;\cat{Vect}_{\cat{k}})$ and $\cat{CoSh}(X;\cat{Vect}_{\cat{k}})$, respectively.
\item If $(P,\le)$ is a preordered set, we denote by $\cat{P}$ the category whose objects are the elements of $P$ and there is a unique morphism $x\to y$ if and only $x\le y$.
\item If $\cat{A}$ is an abelian category we denote by $C^b(\cat{A})$ its category of bounded complexes, by $K^b(\cat{A})$ its bounded homotopy category and by $D^b(\cat{A})$ its bounded derived category.
\item If $M$ is in $C(\cat{A})$ we denote by $H^{\bullet}M$ ($H_{\bullet}M$) the cohomology (homology) complex of $M$, that is $H^{\bullet}M^n:=H^n(M)$ $(H_{\bullet}M:=H_n(M))$ and all the coboundary (boundary) maps are $0$.
\end{itemize}

\subsection{Convolution of sheaves on euclidean space}
We recall the convolution operation , $\star$, and some of its properties for bounded derived complexes of sheaves of $\mathbf{k}$-vector spaces,  $D^b(\cat{Sh}(V;\mathbf{k}))$, for some field $\mathbf{k}$ on $V=\mathbb{R}^n$. For a more detailed introduction and proofs see \cite{kashiwara2018persistent,
berkouk2018derived,guillermou2014microlocal,
tamarkin2013microlocal}. The purpose of this section is to recall results for complexes of sheaves on $\mathbb{R}^n$ with its standard topology. In later sections we define analogous convolution operations for sheaves and cosheaves on an arbitrary preordered set, $(P,\le,+,0)$, with a compatible abelian group structure with the Alexandrov topology (\cref{def:alexandrov}) and show results that are analogous to the ones presented below. Hence, the definitions and results that follow only serve as a reference in order to make the paper somewhat self-contained.

Let $s:\mathbb{R}^n\times\mathbb{R}^n\to \mathbb{R}^n$ be the addition map $s(x,y):=x+y$ and let $\pi_{i}:\mathbb{R}^n\times \mathbb{R}^n\to \mathbb{R}^n$ for $i=1,2$ denote the canonical projections. 

\begin{definition}
\label{def:convolution_of_constructible_sheaves}
Let $M,N\in D^b(\cat{Sh}(V;\mathbf{k}))$. The convolution $M\star N$ is defined to be
\[M\star N:=Rs_{!}(M\boxtimes N),\]
where $\boxtimes$ denotes the \emph{external tensor product of sheaves} and $Rs_!$ is the \emph{right derived functor of the direct image with proper support} of the addition map $s$. See \cref{section:background_sheaves} for more details. 
\end{definition}

The convolution functor has right adjoint $\scHom^{\star}$, first introduced by Tamarkin in \cite{tamarkin2013microlocal} and then reintroduced by Schapira and Guillermou in \cite{guillermou2014microlocal}. It is the functor in the derived category defined below.

\begin{definition}
\label{def:convolution_adjoint}
Let $F$ and $G$ be in $D^b(\cat{Sh}(V,\cat{k}))$. Define $\scHom^{\star}(-,-):(D^b(\cat{Sh}(V,\cat{k})))^{op}\times D^b(\cat{Sh}(V,\cat{k}))\to D^b(\cat{Sh}(V,\cat{k}))$ by
\[\scHom^{\star}(F,G):=R\pi_{1*}R\scHom(\pi_2^{-1}G,s^{!}F),\]
where $R\scHom$ is the \emph{right derived functor of the sheaf hom functor}, $\pi_2^{-1}$ is the \emph{inverse image functor} of the map $\pi_2$, $R\pi_{1*}$ is the \emph{right derived functor of the direct image functor} of the map $\pi_1$ and $s^{!}$ is the \emph{exceptional inverse image functor} of the addition map $s$. See \cite{kashiwara1990sheaves} for more details.
\end{definition}

Let $Z$ be a closed subset of a topological space $X$ and let $j:Z\to X$ denote the inclusion map. If $F$ is a sheaf of abelian groups on $X$ we can define a new sheaf on $X$, $F_Z:=j_*j^{-1}F$, where $j_*$ and $j^{-1}$ are the classical direct and inverse image functors of sheaves respectively. If $\cat{k}$ is a field it is common to denote by $\cat{k}_X$ the constant sheaf on $X$ that assigns to every open set the field $\cat{k}$. If $Z\subseteq X$ is closed it is common to set $\cat{k}_Z:=j_*j^{-1}\cat{k}_X$.

\begin{definition}
\label{def:convolving_object_constructible_sheaf}
For $\epsilon\ge 0$, let $K_{\epsilon}:=K_{\epsilon}:=\mathbf{k}_{B_{\epsilon}}$ where $B_{\epsilon}:=\{x\in V\,|\, ||x||\le \epsilon\}$ and $\mathbf{k}_{B_{\epsilon}}$ is considered as a complex concentrated in degree $0$ in $D^b(\cat{Sh}(V;\cat{k}))$. For $\epsilon<0$, set $K_{\epsilon}:=\mathbf{k}_{\{x\in V\,|\, ||x||<-\epsilon\}}[n]$, where $n$ is the dimension of $V$ (Recall that if $X$ is a chain complex, $X[n]$ is the shifted complex $X[n]^m:=X^{n+m}$).
\end{definition}

\begin{proposition}
\label{prop:properties_of_convolving_object_constr_sheaf}
Let $\epsilon,\delta\in \mathbb{R}$ and let $F$ be a complex in $D^b(\cat{Sh}(V;\cat{k}))$.
\begin{itemize}
\item[1.] There are natural isomorphisms $(F\star K_{\epsilon})\star K_{\delta}\cong F\star K_{\epsilon+\delta}$ and $F\star K_0\cong F$.
\item[2.] If $\delta\ge  \epsilon$, there is a canonical morphism $K_{\delta}\to K_{\epsilon}$ in $D^b(\mathbf{k}_V)$ inducing a canonical morphism $F\star K_{\delta}\to F\star K_{\epsilon}$. 
\item[3.] The canonical morphism $F\star K_{\delta}\to F\star K_{\epsilon}$ induces an isomorphism $R\Gamma(V,F\star K_{\delta})\to R\Gamma(V,F\star K_{\epsilon})$ and hence an isomorphism  in cohomology.
\end{itemize}
\end{proposition}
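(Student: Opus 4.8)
The three statements all rest on one computation, the semigroup law $K_\epsilon\star K_\delta\cong K_{\epsilon+\delta}$, so I would prove that first and bootstrap the rest from it. The plan for (1) is to begin by recording the general associativity of convolution: for $F_1,F_2,F_3\in D^b(\cat{Sh}(V;\mathbf{k}))$ one has $(F_1\star F_2)\star F_3\cong Rs^{(3)}_!(F_1\boxtimes F_2\boxtimes F_3)\cong F_1\star(F_2\star F_3)$, where $s^{(3)}:V\times V\times V\to V$ is the triple addition. This follows formally from the compatibility of $Rs_!$ with the external product on one factor (a Künneth/base-change identity) together with $R(g\circ f)_!\cong Rg_!\circ Rf_!$ applied to $s\circ(s\times\mathrm{id})=s\circ(\mathrm{id}\times s)=s^{(3)}$; these are standard for $Rs_!$ (see \cite{kashiwara1990sheaves}). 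Granting this, $(F\star K_\epsilon)\star K_\delta\cong F\star(K_\epsilon\star K_\delta)$, so (1) reduces to the semigroup law. The unit isomorphism $F\star K_0\cong F$ I would get directly: $K_0=\mathbf{k}_{\{0\}}$, and writing $i:V\hookrightarrow V\times V$, $x\mapsto(x,0)$, the projection formula gives $F\boxtimes\mathbf{k}_{\{0\}}\cong i_*F$, whence $Rs_!(F\boxtimes K_0)\cong Rs_!i_!F\cong R(s\circ i)_!F=F$ since $s\circ i=\mathrm{id}_V$.

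For the semigroup law I would use proper base change: the stalk of $Rs_!(\mathbf{k}_A\boxtimes\mathbf{k}_B)$ at $z$ is $R\Gamma_c\big(A\cap(z-B);\mathbf{k}\big)$, where $A,B$ are the (open or closed) balls underlying $K_\epsilon,K_\delta$ and the intersection is taken inside the fiber $s^{-1}(z)\cong V$. Each set $A\cap(z-B)$ is an intersection of two balls, hence convex, and I would compute $R\Gamma_c$ case by case on the signs of $\epsilon,\delta$: when it is a nonempty closed convex set $R\Gamma_c$ is $\mathbf{k}$ in degree $0$, when it is a nonempty open convex subset of the $n$-dimensional fiber $R\Gamma_c$ is $\mathbf{k}$ in degree $n$, and when it is ``half-open'' (a lens with part of its boundary included and part excluded) $R\Gamma_c$ vanishes. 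Keeping track of the shift $[n]$ built into $K_\epsilon$ for $\epsilon<0$, these stalk computations match exactly the stalks of $K_{\epsilon+\delta}$: the support is $B_{\epsilon+\delta}$ with the class in degree $0$ when $\epsilon+\delta\ge0$, and the open ball of radius $-(\epsilon+\delta)$ with the class in degree $-n$ when $\epsilon+\delta<0$. One then checks the co-restriction maps are identities where both stalks are nonzero, identifying the sheaf as $K_{\epsilon+\delta}$. I expect this case analysis, and in particular the vanishing of $R\Gamma_c$ for the half-open lenses (which I would extract from the long exact sequence for a closed–open decomposition of a closed ball), to be the main obstacle.

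With the semigroup and unit laws in hand, (2) becomes easy and admits a uniform construction that sidesteps the mixed-sign cases. For $\mu\ge0$ there is a canonical morphism $c_\mu:K_\mu=\mathbf{k}_{B_\mu}\to\mathbf{k}_{\{0\}}=K_0$, namely the restriction morphism associated to the inclusion $\{0\}\subseteq B_\mu$ of closed sets. For arbitrary $\delta\ge\epsilon$, set $\mu:=\delta-\epsilon\ge0$ and define $K_\delta\to K_\epsilon$ to be $K_\epsilon\star c_\mu$, read through the identifications $K_\epsilon\star K_\mu\cong K_\delta$ and $K_\epsilon\star K_0\cong K_\epsilon$ of (1). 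Applying the functor $F\star(-)$ and associativity then produces the canonical morphism $F\star K_\delta\to F\star K_\epsilon$, which by construction equals $(F\star K_\epsilon)\star c_\mu$.

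Finally, for (3) I would again reduce to the maps $c_\mu$ with $\mu\ge0$: writing $G:=F\star K_\epsilon$, the morphism $F\star K_\delta\to F\star K_\epsilon$ becomes $G\star c_\mu:G\star K_\mu\to G$, so it suffices to show $R\Gamma(V,G\star K_\mu)\to R\Gamma(V,G)$ is an isomorphism for every $G$ and every $\mu\ge0$. The key observation is that $K_\mu=\mathbf{k}_{B_\mu}$ has compact support, so the restriction of $s$ to the support $\mathrm{supp}(G)\times B_\mu$ of $G\boxtimes K_\mu$ is proper and hence $Rs_!(G\boxtimes K_\mu)\cong Rs_*(G\boxtimes K_\mu)$. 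Applying $R\Gamma(V,-)$ and using $R\Gamma(V,Rs_*(-))\cong R\Gamma(V\times V,-)$ together with the Künneth isomorphism gives $R\Gamma(V,G\star K_\mu)\cong R\Gamma(V,G)\otimes R\Gamma(B_\mu;\mathbf{k})\cong R\Gamma(V,G)$, since $B_\mu$ is contractible. Under this identification $c_\mu$ induces the restriction $R\Gamma(B_\mu;\mathbf{k})\to R\Gamma(\{0\};\mathbf{k})$, which is the identity of $\mathbf{k}$; hence the map on $R\Gamma$ is an isomorphism, and passing to cohomology completes the proof.
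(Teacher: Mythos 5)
You should first be aware that the paper contains no proof of this proposition: it sits in the background subsection on convolution of sheaves on $\mathbb{R}^n$ with its standard topology, which the author states ``only serve[s] as a reference,'' with proofs deferred to \cite{kashiwara2018persistent,guillermou2014microlocal,tamarkin2013microlocal}. So there is nothing in-paper to compare against; what you have reconstructed is, in outline, the standard argument of Kashiwara--Schapira, and it is the right one. Your architecture --- reduce everything to the semigroup law $K_\epsilon\star K_\delta\cong K_{\epsilon+\delta}$ via associativity and the unit $K_0=\mathbf{k}_{\{0\}}$, prove the semigroup law by proper base change on the fibers of $s$, then obtain (2) and (3) uniformly from the single family $c_\mu:K_\mu\to K_0$ with $\mu=\delta-\epsilon\ge 0$ --- is exactly how this is done, and your identification of the half-open lens computation as the real content is accurate. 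Note that the paper's proofs of the \emph{analogous} statements in the Alexandrov setting (its Propositions 3.6 and 3.12) go through an entirely different mechanism, namely the identification of $-\grtensor\mathbf{k}[U_s]$ with the translation functor $M\mapsto M(-s)$; that shortcut has no counterpart here, since $F\star K_\epsilon$ is genuinely not a translate of $F$, which is why the fiberwise $R\Gamma_c$ analysis is unavoidable in the Euclidean case.

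Two steps deserve more care than your sketch gives them, though neither is a fatal gap. First, a stalkwise computation does not by itself identify an object of $D^b(\cat{Sh}(V;\mathbf{k}))$; you should either observe that your computation shows the cohomology sheaves of $Rs_!(K_\epsilon\boxtimes K_\delta)$ are concentrated in a single degree (after which a sheaf is determined by its stalks and restriction maps), or produce the comparison morphism first. For $\epsilon,\delta\ge 0$ the cleanest route is that $s:B_\epsilon\times B_\delta\to B_{\epsilon+\delta}$ is proper with nonempty compact convex fibers, so Vietoris--Begle gives a canonical isomorphism $\mathbf{k}_{B_{\epsilon+\delta}}\xrightarrow{\sim}Rs_*\mathbf{k}_{B_\epsilon\times B_\delta}=K_\epsilon\star K_\delta$; the signed cases still need your lens analysis (e.g.\ to get $K_\epsilon\star K_{-\epsilon}\cong K_0$, after which the remaining cases follow by $\star$-invertibility of $K_\epsilon$). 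Second, the ``K\"unneth isomorphism'' you invoke in (3) is for $R\Gamma$, not $R\Gamma_c$, and is not unconditional; here it is justified because $\pi_1:V\times B_\mu\to V$ is proper with contractible fibers, so $R\pi_{1*}\pi_1^{-1}G\cong G$ and hence $R\Gamma(V\times V, G\boxtimes\mathbf{k}_{B_\mu})\cong R\Gamma(V,G)$. Phrase it that way and the argument is complete.
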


\begin{definition}
\label{def:convolution_distance_constructible_sheaf}
Let $F$ and $G$ be in $D^b(\cat{Sh}(V;\cat{k}))$ and let $a\ge 0$. We say $F$ and $G$ are $a$-isomorphic if there are morphisms $f:K_a\star F\to G$ and $g:K_a\star G\to F$ such that the composition $K_{2a}\star F\xrightarrow{K_a\star f}K_a\star G\xrightarrow{g} F$ coincides with the natural morphism $K_{2a}\star F\to F$ and the composition $K_{2a}\star G\xrightarrow{K_a\star g}K_a\star F\xrightarrow{g} G$ coincides with the natural morphism $K_{2a}\star G\to G$. If $F$ and $G$ are $a$-isomorphic, then they are $b$-isomorphic for any $b\ge a$. Thus, one can define the \emph{convolution distance between $F$ and $G$} by:
\[\text{dist}(F,G)=\inf(\{+\infty\}\cup \{a\in \mathbb{R}_{\ge 0}\,|\, \text{F and G are a-isomorphic}\})\]
\end{definition}

Note that $F$ and $G$ are $0$-isomorphic if and only if $F\cong G$.
\begin{proposition}
\label{prop:convolution_is_a_distance_contructible}
The convolution distance is an extended pseudo metric on $D^b(\mathbf{Sh}(V;\mathbf{k}))$, that is for all $F,G,H\in \text{Ob}(D^b(\mathbf{Sh}(V;\mathbf{k})))$ we have:
\begin{itemize}
\item $\emph{dist}(F,G)=\emph{dist}(G,F)$,
\item $\emph{dist}(F,G)\le \emph{dist}(F,H)+\emph{dist}(H,G)$.
\end{itemize}
\end{proposition}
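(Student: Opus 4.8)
The plan is to treat the two clauses of \cref{prop:convolution_is_a_distance_contructible} separately, with symmetry being essentially formal and the triangle inequality carrying the bulk of the work. For symmetry, I would observe that the condition defining an $a$-isomorphism in \cref{def:convolution_distance_constructible_sheaf} is invariant under simultaneously interchanging $F\leftrightarrow G$ and $f\leftrightarrow g$: the two compatibility equations are carried into one another by this swap. Hence the set of $a$ for which $F$ and $G$ are $a$-isomorphic coincides with the set for which $G$ and $F$ are, and the two infima agree, giving $\mathrm{dist}(F,G)=\mathrm{dist}(G,F)$.

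For the triangle inequality, the key reduction I would establish is the composability statement: if $F,H$ are $a$-isomorphic and $H,G$ are $b$-isomorphic, then $F,G$ are $(a+b)$-isomorphic. Granting this, taking the infimum over all admissible $a,b$ yields $\mathrm{dist}(F,G)\le \mathrm{dist}(F,H)+\mathrm{dist}(H,G)$. To prove the reduction, let $f_1\colon K_a\star F\to H$, $g_1\colon K_a\star H\to F$ witness the first $a$-isomorphism and $f_2\colon K_b\star H\to G$, $g_2\colon K_b\star G\to H$ the second. Using the associativity isomorphism $K_{a+b}\star F\cong K_b\star(K_a\star F)$ of \cref{prop:properties_of_convolving_object_constr_sheaf}(1), I would set
\[f\colon K_{a+b}\star F\cong K_b\star(K_a\star F)\xrightarrow{K_b\star f_1}K_b\star H\xrightarrow{f_2}G,\]
and symmetrically $g\colon K_{a+b}\star G\cong K_a\star(K_b\star G)\xrightarrow{K_a\star g_2}K_a\star H\xrightarrow{g_1}F$.

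It then remains to verify the two compatibility conditions; I would treat $g\circ(K_{a+b}\star f)=\mathrm{nat}$, the other being identical after swapping $f$ and $g$. Expanding both composites and freely reassociating, I would rewrite this composite as the chain
\[K_{2(a+b)}\star F\xrightarrow{K_{a+2b}\star f_1}K_{a+2b}\star H\xrightarrow{K_a\star(g_2\circ(K_b\star f_2))}K_a\star H\xrightarrow{g_1}F.\]
The $b$-isomorphism compatibility $g_2\circ(K_b\star f_2)=\mathrm{nat}$ turns the middle arrow into the canonical morphism $K_{a+2b}\star H\to K_a\star H$ of \cref{prop:properties_of_convolving_object_constr_sheaf}(2); naturality of that canonical morphism against $f_1$ then slides it to the left, producing $K_{2(a+b)}\star F\xrightarrow{\mathrm{nat}}K_{2a}\star F\xrightarrow{K_a\star f_1}K_a\star H\xrightarrow{g_1}F$. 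The $a$-isomorphism compatibility $g_1\circ(K_a\star f_1)=\mathrm{nat}$ collapses the last two arrows to the canonical $K_{2a}\star F\to F$, and transitivity of the canonical morphisms identifies the whole composite with the natural morphism $K_{2(a+b)}\star F\to F$, as required.

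The main obstacle I anticipate is coherence bookkeeping rather than the formal shape of the argument. The chase hinges on three properties of the canonical morphisms $K_\delta\to K_\epsilon$ that are used implicitly above: that convolving a canonical morphism with an identity is again canonical (so that $K_a\star\mathrm{nat}=\mathrm{nat}$), that these morphisms are natural in the convolved object (licensing the sliding step), and that they compose transitively (so $K_{2(a+b)}\to K_{2a}\to K_0$ equals $K_{2(a+b)}\to K_0$). Making these precise, together with careful tracking of the associativity isomorphisms so that every intervening square genuinely commutes, is where the real care lies.
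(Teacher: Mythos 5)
Your argument is correct, but note that the paper itself offers no proof of this proposition: it appears in the background section as a recalled result, cited from the work of Kashiwara and Schapira, and the author explicitly states that these results ``only serve as a reference.'' So there is nothing in the paper to compare against line by line. What you have written is the standard proof from the literature: symmetry by the evident swap of the interleaving data, and the triangle inequality via the composability lemma (``$a$-isomorphic and $b$-isomorphic implies $(a+b)$-isomorphic'') followed by passage to infima, which correctly handles the possibility that neither infimum is attained since any $a>\mathrm{dist}(F,H)$ and $b>\mathrm{dist}(H,G)$ are admissible. The diagram chase is the right one, and the three coherence facts you isolate --- functoriality of $-\star K_{\epsilon}$ on the canonical morphisms, naturality of $K_{\delta}\star F\to K_{\epsilon}\star F$ in $F$, and transitivity $K_{\delta}\to K_{\epsilon}\to K_{\eta}$ --- are exactly what \cref{prop:properties_of_convolving_object_constr_sheaf} packages (the canonical morphisms of part 2 are induced by morphisms of the convolving kernels $K_{\delta}\to K_{\epsilon}$, which makes all three properties automatic). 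No gap.
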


\subsection{Persistence modules}
Here we recall facts about persistence modules that will be necessary for the rest of the paper. Let $(P,\le)$ be a preordered set. A subset $U\subseteq P$ is called an \emph{up-set} if whenever $x\in U$ and $x\le y$ then $y\in U$. A \emph{principal up-set at $x$} is the up-set $U_x$ defined by $U_x:=\{y\,|\, x\le y\}$. A subset $D\subseteq P$ is called a \emph{down-set} if whenever $x\in D$ and $y\le x$ then $y\in D$. A \emph{principal down-set at $x$} is the down-set $D_x$ defined by $D_x:=\{y\,|\, y\le x\}$. Given a preordered set $(P,\le)$ we also have the preordered set $(P,\le^{op})$ where $x\le^{op}y$ if and only of $y\le x$ for all $x,y\in P$. The example we will be dealing the most in this paper is the poset $(\mathbb{R}^n,\le)$ where $x\le y$ if and only if $x_i\le y_i$ for all $1\le i\le n$. 

\begin{definition} 
\label{def:alexandrov}
 Let $(P,\le)$ be a preordered set. Define the \emph{Alexandrov topology} on $P$ to be the topology whose open sets are the up-sets in $P$. Let $\Open(P)$ denote the category whose objects are the open sets in $P$ and whose morphisms are given by inclusions. The \emph{opposite Alexandrov topology} on $P$ is the topology whose open sets are the up-sets with respect to the opposite preorder $\le^{op}$. Equivalently, the open sets of the Alexandrov topology of $(P,\le^{op})$ are the closed sets of the Alexandrov topology of $(P,\le)$.
\end{definition}
 
\begin{lemma}
Let $(P,\leq)$ and $(Q,\leq)$ be preordered sets and consider $P$ and $Q$ together with their corresponding Alexandrov topologies. Let $f: P \to Q$ be a map of sets. Then $f$ is order-preserving  if and only if $f$ is continuous.
\end{lemma}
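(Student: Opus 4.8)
The plan is to prove both implications directly from the definition of the Alexandrov topology, whose open sets are precisely the up-sets. The only structural fact I need is that each principal up-set $U_q=\{r\mid q\le r\}$ is itself an up-set, and hence open; these principal up-sets will serve as test sets that detect the order relation, and they are the crux of the converse direction.

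For the forward direction, I would suppose $f$ is order-preserving and let $V\subseteq Q$ be open, i.e.\ an up-set. I then show $f^{-1}(V)$ is an up-set: given $x\in f^{-1}(V)$ and $x\le y$, order-preservation yields $f(x)\le f(y)$, and since $f(x)\in V$ with $V$ an up-set, it follows that $f(y)\in V$, so $y\in f^{-1}(V)$. Thus the preimage of every open set is open, and $f$ is continuous.

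For the converse, I would suppose $f$ is continuous and take any $x\le y$ in $P$, aiming to conclude $f(x)\le f(y)$. Consider the principal up-set $U_{f(x)}\subseteq Q$, which is open. By continuity $f^{-1}(U_{f(x)})$ is open in $P$, hence an up-set. Since $f(x)\le f(x)$ we have $x\in f^{-1}(U_{f(x)})$, and because this preimage is an up-set containing $x$ with $x\le y$, we also get $y\in f^{-1}(U_{f(x)})$; unwinding the definition gives $f(x)\le f(y)$. Therefore $f$ is order-preserving.

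Both directions reduce to elementary manipulations of up-sets, so I expect no genuine obstacle; the single idea that makes the converse work is the observation that principal up-sets are open, which is exactly what allows continuity to recover the order. I would also remark that the identical argument applies verbatim to the opposite Alexandrov topology by reversing the preorder, so the lemma is insensitive to that choice of convention.
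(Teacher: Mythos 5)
Your proof is correct and complete; both directions are the standard argument, with the key observation that principal up-sets are open doing the work in the converse. The paper states this lemma without proof, and your argument is exactly the one that would be expected there.
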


\begin{example}
Consider $(\mathbb{R},\le)$ with the Alexandrov topology. Then the open sets are $\emptyset$, $\R$, and the intervals $(a,\infty)$ and $[a,\infty)$, where $a \in \R$. Similarly, the open sets of $(\mathbb{R},\le^{op})$ are the $\emptyset$, $\mathbb{R}$ and the intervals  $(-\infty,a)$ and $(-\infty,a]$ where $a\in \mathbb{R}$.
\end{example}


\begin{definition}
\label{def:persistence_modules_as_functors}
A \emph{persistence module} is a functor $M:\cat{P}\to \mathbf{Vect}_{\mathbf{k}}$. A morphims $f:M\to N$  between persistence modules $M$ and $N$ is a natural transformation. 
\end{definition}

Although we defined persistence modules on arbitrary preordered sets, it turns out that when we endow the preordered set with a compatible abelian group structure we get a rich homological algebra theory of persistence modules \cite{miller2020homological,bubenik2019homological}. This is because in this setting persistence modules can be canonically identified with graded modules where the grading is over the preordered set. By a preordered set with a compatible abelian group structure we mean a 4-tuple $(P,\le,+,0)$ where $+$ is an abelian group operation and $0$ is the identity and $a\le b$ implies $a+c\le b+c$ for all $a,b,c\in P$.

\begin{definition}
\label{def:graded_module_tensor_product}
Let $M,N:\cat{P}\to \cat{Vect}_{\cat{k}}$ be persistence modules. Assume that $(P,\le)$ has a compatible abelian group structure. The \emph{graded module tensor product of $M$ and $N$} is the persistence module $M\grtensor N:\cat{P}\to \mathbf{Vect}_{\mathbf{k}}$ defined by $(M\grtensor N)_x:=\colim_{a+b\le x}M_a\otimes_{\mathbf{k}}N_b$. The bifunctor $-\grtensor-$ is right exact is each argument and we can consider its left derived functor, $-\grtensor^L-$. If the functor $M\grtensor -$ or $-\grtensor M$ is exact, we say the persistence module $M$ is \emph{$\grtensor$-flat}. As the name implies, this tensor product is canonically isomorphic with the tensor product of graded modules over a certain $P$ graded ring,  when we think of $M$ and $N$ as graded modules over said ring. It is not important for this paper to identify the graded ring in question as we can give the above functorial definition of $-\grtensor-$.  See for example \cite{bubenik2019homological} for more details involving the graded module point of view.
\end{definition}

\begin{theorem}{\cite[Theorem 4.2.10]{curry2013sheaves}}
\label{theorem:functors_on_posets_are_sheaves_and_cosheaves}
Let $(P,\le )$ be a preordered set and let $\mathbf{C}$ be a complete (respectively cocomplete) category. Then there is an isomorphism of categories $\mathbf{P}^{\mathbf{C}}\cong \mathbf{Sh}(P;\mathbf{C})$ (respectively $\mathbf{P}^{\mathbf{C}}\cong \mathbf{CoSh}(P^{\text{op}};\mathbf{C}))$. 
\end{theorem}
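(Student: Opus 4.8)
The plan is to exhibit explicit functors in both directions and check that they are mutually inverse, exploiting the defining feature of the Alexandrov topology: because arbitrary intersections of up-sets are again up-sets, every point $x\in P$ possesses a \emph{smallest} open neighborhood, namely the principal up-set $U_x=\{y\mid x\le y\}$. First I would record the consequences of this fact that drive the whole argument: the family $\{U_x\}_{x\in P}$ is a basis for the topology, every open set satisfies $U=\bigcup_{x\in U}U_x$, and for $x\le y$ one has $U_y\subseteq U_x$, so that inclusions among principal up-sets correspond contravariantly to the morphisms of $\mathbf{P}$.

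Next I would define the comparison functors. From a sheaf $F$ I produce the functor $\Phi(F)\colon \mathbf{P}\to\mathbf{C}$ with $\Phi(F)(x):=F(U_x)$ and, for $x\le y$, the transition map $F(U_x)\to F(U_y)$ given by restriction along $U_y\subseteq U_x$; a morphism of sheaves is a family of maps commuting with restrictions, hence is sent to a natural transformation, so $\Phi$ is functorial. In the reverse direction, using that $\mathbf{C}$ is complete, I define $\Psi(M)\colon \Open(P)^{\op}\to\mathbf{C}$ on an open (up-)set $U$ by the limit $\Psi(M)(U):=\lim_{x\in U}M(x)$ over the subposet $U\subseteq\mathbf{P}$, with restriction maps induced by the inclusions of diagrams $V\subseteq U$.

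The main obstacle, and the step I would spend the most care on, is verifying that $\Psi(M)$ is genuinely a sheaf rather than merely a presheaf. The key lemma I would prove is that a presheaf $G$ on the Alexandrov space is a sheaf if and only if the canonical map $G(U)\to \lim_{x\in U}G(U_x)$ is an isomorphism for every open $U$. The nontrivial direction uses that every open cover of $U$ is refined by the canonical cover $\{U_x\}_{x\in U}$ and that the overlaps $U_x\cap U_{x'}$ are again open up-sets, so the gluing condition for an arbitrary cover collapses to the single limit over the poset $U$; completeness of $\mathbf{C}$ guarantees this limit exists. Granting this characterization, $\Psi(M)$ is a sheaf for free: since $x$ is the least element of $U_x$ it is initial in the diagram, whence $\Psi(M)(U_x)=\lim_{y\in U_x}M(y)=M(x)$, and therefore $\lim_{x\in U}\Psi(M)(U_x)=\lim_{x\in U}M(x)=\Psi(M)(U)$.

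The same initial-object computation finishes the equivalence. On one side, $\Phi(\Psi(M))(x)=\Psi(M)(U_x)=M(x)$ naturally in $x$; on the other, for a sheaf $F$ the characterization gives $\Psi(\Phi(F))(U)=\lim_{x\in U}F(U_x)=F(U)$, compatibly with restrictions, so $\Phi$ and $\Psi$ are mutually inverse. (Fixing, once and for all, the limit of a diagram with an initial object to be the value at that object makes these identifications strict, upgrading the equivalence to an honest isomorphism of categories.) Finally I would dispatch the cosheaf statement by duality: the opens of $P^{\op}$ are the down-sets of $P$, the minimal neighborhood of $x$ is the principal down-set $D_x$ in which $x$ is now the greatest, hence terminal, element, and replacing limits by colimits throughout---this is where cocompleteness of $\mathbf{C}$ is used---yields the assignment $M\mapsto\bigl(D\mapsto\colim_{x\in D}M(x)\bigr)$ together with its inverse $\check F\mapsto\bigl(x\mapsto\check F(D_x)\bigr)$, establishing $\mathbf{P}^{\mathbf{C}}\cong\mathbf{CoSh}(P^{\op};\mathbf{C})$.
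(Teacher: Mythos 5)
Your proposal is correct and follows essentially the same route as the paper's sketch: one direction is the (right/left) Kan extension along the embedding $x\mapsto U_x$ (respectively $x\mapsto D_x$), i.e.\ $U\mapsto\lim_{x\in U}M(x)$, and the other is evaluation at principal up-sets (stalks/costalks), with the identification $F(U_x)=F_x$ making the extensions honest extensions. You supply more detail than the paper does---notably the characterization of sheaves on an Alexandrov space via the canonical covers $\{U_x\}_{x\in U}$ and the remark about rigidifying limit choices to get an isomorphism rather than a mere equivalence---but the underlying argument is the same.
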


\begin{proof}
We only give the main ideas of the proof as they will be used later in the paper. There are embeddings of pre-ordered sets, $\iota:P \to \Open(P)$ and $j:P\to \Open(P^{op})$ where $\Open(P)$ are the up-sets in $P$ and $\Open({P^{op}})$ are the up-sets in $P^{op}$ or equivalently, down-sets in $P$. The orders on $\Open(P)$ and $\Open(P^{op})$ are given by inclusions of subsets. The embeddings are given by $\iota(x):=U_x$ and $j(x):=D_x$. 

Given a functor $F$ in $\cat{P}^{\cat{C}}$ abusing notation we can define a presheaf, that turns out to be a sheaf, $F$ in $\cat{Sh}(P,\cat{k})$ by a right Kan extension along $\iota$, $F(U):=\text{Ran}_{\iota}F(U):=\lim_{x\in U} F_x$ for every up-set $U\subseteq P$.
Dually, we can define a cosheaf $F$ in $\cat{CoSh}(P^{op},\cat{k})$ by a left Kan extension along $j$, $F(D):=\text{Lan}_jF(D):=\colim_{x\in D} D_x$ for every down-set $D\subseteq P$.

Going the other way, given a sheaf in $\cat{Sh}(P;\cat{C})$ or a sheaf $\cat{CoSh}(P^{op};\cat{C})$ we can define a functor in $\cat{P}^{\cat{C}}$ by considering the sheaf stalks or the cosheaf costalks respectively. These two constructions give us the isomorphisms of categories.

In particular, for every principal down-set and up-set $D_x$ and $U_x$, we have the following equalities, $F(D_x)=F_x=F(U_x)$ and thus these Kan extensions are actual extensions.
\end{proof}

An immediate consequence of \cref{theorem:functors_on_posets_are_sheaves_and_cosheaves}
and the fact that $\mathbf{Vect}_{\mathbf{k}}$ is complete and cocomplete, we have that \cref{def:persistence_modules_as_sheaves,def:persistence_modules_as_cosheaves}
are equivalent to \cref{def:persistence_modules_as_functors}.

\begin{definition}
\label{def:persistence_modules_as_sheaves}
A persistence module $M$ is a sheaf on $(P,\le)$ valued in $\mathbf{Vect}_{\mathbf{k}}$. where the open sets are the up-sets.
\end{definition}

\begin{definition}
\label{def:persistence_modules_as_cosheaves}
A persistence module $M$ is a cosheaf on $(P,\le)$ valued in $\mathbf{Vect}_{\mathbf{k}}$. where the open sets are the down-sets, i.e., the opposite topology of the one in \cref{def:persistence_modules_as_sheaves}. 
\end{definition}

From now on in this paper we assume persistence modules are always functors on a preordered set with a compatible abelian group structure.

Let $M:\cat{P}\to \mathbf{Vect}_{\mathbf{k}}$ be a persistence module. If $a\in P$ we define a persistence module $M(a)$, by $M(a)_x:=M_{x+a}$. A morphism $f:M\to N$ of persistence modules induces an obvious morphism $f(a):M(a)\to N(a)$. If $A\subseteq P$ define $A(a)=\{x\in P\,|\, \exists y\in A, x=y+a\}$. Having the sheaf and cosheaf point of view as in \cref{theorem:functors_on_posets_are_sheaves_and_cosheaves}, it follows that $M(a)(U)=M(U(a))$ and $M(a)(D)=M(D(a))$ for every up-set $U$ and every down-set $D$ in $P$. If $M$ is a complex of persistence modules we can also define $M(a)$ to be the complex whose $n$-term is $M(a)^n:=M^n(a)$. Given a chain map $f:M\to N$ between persistence modules, we get an obvious chain map $f(a):M(a)\to N(a)$.

\begin{definition}
\label{def:internal_hom}
Let $M$ and $N$ be persistence modules. We can construct a new persistence module $\uHom(M,N):\cat{P}\to \mathbf{Vect}_{\mathbf{k}}$ defined by $\uHom(M,N)_x:=\Hom(M,N(x))$. It was shown in \cite[Proposition 4.6]{bubenik2019homological} that there is a canonical isomorphism $\uHom(M,N)_x\cong \lim_{a+b\ge x}\Hom_{\mathbf{k}}(M_{-a},N_b)$.
\end{definition}

The following is perhaps a non-standard definition of the interleaving distance between persistence modules on $\mathbb{R}^n$, but a reader familiar with persistence modules will quickly realize it is equivalent to the standard definition. 

\begin{definition}
\label{def:interleaving_distance_of_persistence_modules}
Let $M,N:\mathbb{R}^n\to \mathbf{Vect}_{\mathbf{k}}$ be persistence modules. Let $\epsilon \in [0,\infty)$ and consider the vector $\bm{\epsilon}\in \mathbb{R}^n$, where $\mathbf{\bm{\epsilon}}_i=\epsilon$ for all $1\le i\le n$. We say $M$ and $N$ are \emph{$\epsilon$-interleaved} if there exists a pair of morphisms of $f:M(-\bm{\epsilon})\to N$ and $g:N(-\bm{\epsilon})\to M$ such that the compositions $M(-2\bm{\epsilon})\xrightarrow{f(-\bm{\epsilon})}N(-\bm{\epsilon})\xrightarrow{g}  M$ and $N(-2\bm{\epsilon})\xrightarrow{g(-\bm{\epsilon})} M(-\bm{\epsilon} )\xrightarrow{f} N$ are equal to the natural transformations whose components are $M_{x-2\bm{\epsilon}\le x}$ and $N_{x-2\bm{\epsilon}\le x}$ for $x\in \mathbb{R}^n$, respectively. If $M$ and $N$ are $\epsilon$-interleaved, then they are $\delta$-isomorphic for any $\delta\ge \epsilon$. Thus, we can define the \emph{interleaving distance} between $M$ and $N$ to be the following extended pseudo-metric
\[d_I(M,N):=\inf(\{+\infty\}\cup\{\epsilon\in \mathbb{R}_{\ge 0}\,|\, \text{M and N are }\epsilon\text{-interleaved}\})\]
\end{definition}

\begin{definition}
\label{def:convex_connected}
Let $(P,\le)$ be a preordered set. A subset $A\subseteq P$ is \emph{convex with respect to} $\le$ if $a\le c\le b$ with $a,b\in A$ implies that $c\in A$. A subset $A$ is \emph{connected with respect to} $\le$ if for any two $a,b\in A$ there exists a sequence $a=p_0\le q_1\ge p_1\le q_2\ge \dots p_n \le q_n=b$ for some $n\in \mathbb{N}$ such that all $p_i,q_i\in A$ for $0\le i\le n$. A connected and convex subset $A\subseteq P$ is called an \emph{interval}. If $A\subseteq P$ is an interval, we will denote by $\mathbf{k}[A]$ the \emph{interval persistence module over $A$}, that is, $\mathbf{k}[A]: \cat{P} \to \mathbf{Vect}_{\mathbf{k}}$ is given by $\mathbf{k}[A]_a=\mathbf{k}$ if $a\in A$ and is $0$ otherwise and all the maps $\mathbf{k}[A]_{a\le b}$, where $a,b \in A$, are identity maps. If $A$ is an interval on the real line $\mathbb{R}$, say $A=[a,b)$, we will write $\mathbf{k}[a,b)$ instead of $\mathbf{k}[[a,b)]$ for brevity. Note that every up-set $U\subseteq \mathbb{R}^n$ and every down-set $D\subseteq \mathbb{R}^n$ is an interval.
\end{definition}

In order to conduct homological algebra computations we need to know if our category of interest has enough projectives and injectives. We are in luck as that is true since $\cat{Vect}_{\cat{k}}^{\cat{P}}$ is a Grothendieck category and furthermore we undestand somewhat which interval modules are injective and which ones are $\grtensor$-flat. See \cite{bubenik2019homological} for more details.

\begin{proposition}
\label{prop:enough_injectives_and_projectives}
The category of persistence modules, $\mathbf{Vect}_{\mathbf{k}}^{\mathbf{R}^n}$, has enough projectives and enough injectives.
\end{proposition}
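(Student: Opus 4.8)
The plan is to treat the two assertions separately: enough injectives will follow from general theory once I identify the ambient category correctly, while enough projectives admits a completely explicit construction via representable modules.

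First I would record that $\Vectk^{\mathbf{R}^n}$ is a Grothendieck abelian category. It is abelian with kernels, cokernels, images and all (co)limits computed objectwise, since this is how the structure is inherited from $\Vectk$. Objectwise computation also shows that filtered colimits are exact, i.e. the axiom AB5 holds, because it holds in $\Vectk$. For the generator I would use the representable modules: for $x\in\mathbb{R}^n$ set $P_x:=\mathbf{k}[U_x]$, the interval module on the principal up-set, so that $(P_x)_y=\mathbf{k}$ when $x\le y$ and $0$ otherwise, with all comparison maps the identity. The linear Yoneda lemma yields a natural isomorphism $\Hom(P_x,M)\cong M_x$ for every persistence module $M$; hence $G:=\bigoplus_{x\in\mathbb{R}^n}P_x$ is a generator, because a nonzero $M$ has some $M_x\neq 0$, producing a nonzero map $P_x\to M$ and therefore a nonzero map $G\to M$. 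Having verified the Grothendieck axioms, enough injectives is exactly the statement that every Grothendieck category has enough injectives, which I would invoke directly.

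For enough projectives I would exploit the same representables. The isomorphism $\Hom(P_x,-)\cong(-)_x$ identifies $\Hom(P_x,-)$ with evaluation at $x$, which is exact since kernels and cokernels are objectwise; hence each $P_x$ is projective. Coproducts of projectives are again projective here, because $\Hom$ out of a coproduct is the product of the individual $\Hom$ functors and products of exact functors valued in abelian groups are exact. Finally, given $M$, I would form the coproduct $\bigoplus_{x\in\mathbb{R}^n}\bigoplus_{m\in M_x}P_x$ equipped with the map whose $(x,m)$-component is the morphism $P_x\to M$ corresponding to $m$ under $M_x\cong\Hom(P_x,M)$. Evaluating at an arbitrary $y$, the summand indexed by $(y,m)$ hits each $m\in M_y$, so the map is objectwise surjective, hence an epimorphism. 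This exhibits $M$ as a quotient of a projective and finishes that half.

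The main obstacle, conceptually, is the enough-injectives half: unlike the projective case there is no elementary objectwise construction of injective resolutions, and one must lean on the general transfinite theorem for Grothendieck categories rather than produce injectives by hand. The remaining content is bookkeeping: verifying the Grothendieck axioms and the Yoneda computation $\Hom(P_x,M)\cong M_x$, both of which reduce cleanly to the corresponding facts in $\Vectk$. I would also note that the set-theoretic smallness of $\mathbb{R}^n$ is what legitimizes the indexing coproducts and guarantees the functor category is well-behaved in the first place.
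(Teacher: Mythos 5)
Your proof is correct and follows essentially the route the paper intends: the paper gives no argument of its own, only the observation that $\mathbf{Vect}_{\mathbf{k}}^{\mathbf{R}^n}$ is a Grothendieck category together with a citation to \cite{bubenik2019homological}, where enough injectives comes from Grothendieck's theorem and enough projectives from exactly your representable modules $\mathbf{k}[U_x]$. Your write-up simply supplies the details the paper defers — AB5 checked objectwise, the generator $\bigoplus_x \mathbf{k}[U_x]$ via the Yoneda isomorphism $\Hom(\mathbf{k}[U_x],M)\cong M_x$, and the explicit epimorphism from a coproduct of these projectives — all of which are sound.
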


\begin{definition}
\label{def:lattice}
Let $(P,\le)$ be a poset. Let $a,b\in P$. The \emph{join} of $a$ and $b$ denoted by $a\vee b$ is the smallest $c\in P$ such that $a\le c$ and $b\le c$, if it exists. The \emph{meet} of $a$ and $b$ denoted by $a\wedge b$ is the largest $c\in P$ such that $c\le a$ and $c\le b$, if it exists. A poset where every join exists is called a \emph{join semilattice} and a poset where every meet exists is called a \emph{meet semillatice}. If a poset is both a meet and join semilattice it is called a \emph{lattice}. Note that every upset $U$ and every downset $D$ in a lattice is an interval.
\end{definition}

\begin{example}
\label{example:lattice}
The poset $(\mathbb{R}^n,\le)$ is a lattice.
\end{example}

\begin{proposition}{\cite[Proposition 6.8]{bubenik2019homological}}
\label{prop:classification_into_flats_and_injectives} 
Let $(P,\le)$ be a lattice. Let $D\subseteq P$ be a down-set such that for all $a,b\in D$ the join $a\vee b$ is in $D$. Then the interval module $\mathbf{k}[D]$ is injective. Let $U\subseteq \mathbb{R}^n$ be an up-set  such that for all $a,b\in U$ the meet $a\wedge b$ is in $U$. Then the interval module $\mathbf{k}[U]$ is $\grtensor$-flat.
\end{proposition}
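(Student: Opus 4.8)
The plan is to prove both assertions by reducing them to the exactness of filtered colimits in the Grothendieck category $\mathbf{Vect}_{\mathbf{k}}^{\mathbf{R}^n}$ (i.e. axiom AB5). The common observation is that the two hypotheses translate directly into directedness: a down-set $D$ closed under joins is a directed poset, since any $a,b\in D$ admit the upper bound $a\vee b\in D$; dually, an up-set $U$ closed under meets is codirected, since any $a,b\in U$ admit the lower bound $a\wedge b\in U$. I would handle the injective statement by a direct computation of the relevant $\Hom$ functor, and the flat statement by exhibiting $\mathbf{k}[U]$ as a filtered colimit of projective modules.

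For injectivity, I would first establish a natural isomorphism $\Hom(M,\mathbf{k}[D])\cong\bigl(\colim_{x\in D}M_x\bigr)^{*}$ for every persistence module $M$, where $(-)^{*}$ is the $\mathbf{k}$-linear dual. Unwinding the definition, a morphism $M\to\mathbf{k}[D]$ is a family of functionals $\phi_x\colon M_x\to\mathbf{k}$ indexed by $x\in D$ (the components at $x\notin D$ being forced to vanish), and naturality says exactly that $\phi_y\circ M_{x\le y}=\phi_x$ for all $x\le y$ in $D$; because $D$ is a down-set this is the only constraint, and such a family is precisely a cocone from the diagram $(M_x)_{x\in D}$ to $\mathbf{k}$, i.e. an element of $(\colim_{x\in D}M_x)^{*}$. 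Thus $\Hom(-,\mathbf{k}[D])$ is the composite of $\Gamma_D:=\colim_{x\in D}(-)$ with $\mathbf{k}$-linear dualization. Since $D$ is directed, $\Gamma_D$ is a filtered colimit, hence exact, and dualization is exact; the composite is therefore exact, which is precisely the statement that $\mathbf{k}[D]$ is injective.

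For $\grtensor$-flatness, I would use as base case that each principal up-set module $\mathbf{k}[U_x]$ is projective — by Yoneda, $\Hom(\mathbf{k}[U_x],-)$ is evaluation at $x$, which is exact — and hence $\grtensor$-flat, as in \cite{bubenik2019homological}. I would then write $U=\bigcup_{x\in U}U_x$ and observe that $\{\mathbf{k}[U_x]\}_{x\in U}$, ordered by reverse inclusion, forms a directed system whose colimit (a directed union of submodules) is $\mathbf{k}[U]$; directedness is exactly the closure of $U$ under meets, since $U_x\cup U_{x'}\subseteq U_{x\wedge x'}$ with $x\wedge x'\in U$. Because $\grtensor$ is defined by colimits it commutes with colimits in each variable, so for any monomorphism $A\hookrightarrow B$ the map $\mathbf{k}[U]\grtensor A\to\mathbf{k}[U]\grtensor B$ is the filtered colimit of the monomorphisms $\mathbf{k}[U_x]\grtensor A\hookrightarrow\mathbf{k}[U_x]\grtensor B$; by AB5 it is again a monomorphism, so $\mathbf{k}[U]$ is $\grtensor$-flat.

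The routine points are the bookkeeping in the $\Hom$ computation and the verification that $\grtensor$ commutes with the relevant filtered colimits (immediate from its pointwise colimit definition together with the fact that $\otimes_{\mathbf{k}}$ preserves colimits). The conceptual crux — and the only place the hypotheses enter — is recognizing that join-closure and meet-closure are exactly directedness of the indexing posets, which is what upgrades the termwise monomorphisms to a genuine monomorphism on colimits via exactness of filtered colimits. The one step I expect to need care is the base case for flatness: confirming that the principal modules $\mathbf{k}[U_x]$ are $\grtensor$-flat (equivalently, that $\mathbf{k}[U_0]$ is the monoidal unit and its shifts are exact), which I would either cite from \cite{bubenik2019homological} or verify by identifying $\mathbf{k}[U_x]$ with a free rank-one graded module.
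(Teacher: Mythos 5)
Your argument is correct, but note that the paper itself offers no proof of this proposition: it is imported verbatim by citation from \cite[Proposition~6.8]{bubenik2019homological}, so the only comparison available is with that external source, whose argument is in the same spirit as yours. Both halves of your proof are sound. For injectivity, the identification $\Hom(M,\mathbf{k}[D])\cong\bigl(\colim_{x\in D}M_x\bigr)^{*}$ is right (the down-set hypothesis is what kills all naturality constraints crossing the boundary of $D$, since $x\le y$ with $y\in D$ forces $x\in D$), and join-closure makes the indexing poset directed, so the functor is a composite of a filtered colimit with vector-space dualization, both exact. For flatness, writing $\mathbf{k}[U]$ as the directed union of the $\mathbf{k}[U_x]$, $x\in U$ (directed precisely because $x\wedge x'\in U$), and invoking AB5 together with the cocontinuity of $\grtensor$ in each variable is a clean route. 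Two small points deserve the care you already flag: (i) the base case must be established without circularity --- the paper's own \cref{lemma:thickening_alexandrov_cosheaf} proves $M\grtensor\mathbf{k}[U_x]\cong M(-x)$ by a direct colimit computation that does not use this proposition, so either that computation or the identification of $\mathbf{k}[U_x]$ as a free rank-one graded module closes the loop; (ii) the degenerate case $D=\emptyset$ (or $U=\emptyset$) should be disposed of separately, since a directed poset is by convention nonempty, though of course the zero module is both injective and flat. With those remarks your proof stands on its own and could replace the citation.
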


\begin{definition}{\cite[Definitions 1.1.3 and 1.5.2]{sanchez2020homology}}
\label{def:sections_and_cosections}
Let $(P,\le)$ be a preorder and let $F:\mathbf{P}\to \mathbf{Vect}_{\mathbf{k}}$ be a functor.  By \cref{theorem:functors_on_posets_are_sheaves_and_cosheaves}, $F$ is a sheaf on the up-sets of $P$ and a cosheaf on the down-sets of $P$. For a subset $S\subseteq P$ we define
\[\Gamma(S;F):=\lim_{s\in S} F_s,\,\,\,\, L(S;F):=\colim_{s\in S}F_s\]
which we call the \emph{sections} and \emph{cosections} of $F$ on $S$ respectively.
The functors $\Gamma(S;-):\mathbf{Vect}_{\mathbf{k}}^{\mathbf{P}}\to \mathbf{Vect}_{\mathbf{k}}$ and $L(S;-):\mathbf{Vect}_{\mathbf{k}}^{\mathbf{P}}\to \mathbf{Vect}_{\mathbf{k}}$ are left exact and right exact respectively. By \cref{prop:enough_injectives_and_projectives} we can define their derived functors. We label them as $R\Gamma(S;-)$ and $LL(S;-)$ or just $\mathbb{L}(S;-)$, respectively. 
\end{definition}

\subsection{Direct and inverse images}
Here we recall some basic facts about the direct and inverse image functors of sheaves and cosheaves. See for example \cite{curry2013sheaves,kashiwara1990sheaves}.

\begin{definition}{\cite[Definition 2.3.1]{kashiwara1990sheaves}}
\label{def:direct_and_inverse_image_sheaf}
Let $f:Y\to X$ be continuous.
\begin{itemize}
\item[1)] Let $G$ be sheaf on $Y$. The \emph{direct image of $G$ by $f$}, denoted $f_*G$ is the sheaf on $X$ defined by:
\[U\mapsto f_*G(U):=G(f^{-1}(U))\]
for all $U\subseteq X$ open.
\item[2)] Let $F$ be a sheaf on $X$. The inverse image of $F$ by $f$, denoted $f^{-1}F$, is the sheaf on $Y$ associated to the presheaf:
\[V\mapsto \colim\limits_{U}F(U)\]
where $V\subseteq Y$ is open and $U$ ranges through the family of open neighborhoods of $f(V)$ in $X$. 
\end{itemize}
\end{definition}


\begin{definition}
\label{def:direct_image_cosheaf}
Let $f:Y\to X$ be continuous.
Let $G$ be cosheaf on $Y$. The \emph{direct image of $G$ by $f$}, denoted $f_{\dagger}G$ is the cosheaf on $X$ defined by:
\[U\mapsto f_{\dagger}G(U):=G(f^{-1}(U))\]
for all $U\subseteq X$ open.
\end{definition}

We would like to define an inverse image functor for cosheaves in a similar fashian as the inverse image of sheaves. The problem is that in general cosheafifaction does not necessarily exists, however for cosheaves of vector spaces we have the following result.

\begin{theorem}{\cite[Corollary 2.3]{curry2013cosheafification}}
\label{theorem:cosheafification}
The  category  of  cosheaves  of  vector  spaces  is  a  coreflective  subcategory precosheaves of vector spaces, i.e., cosheafification exists.
\end{theorem}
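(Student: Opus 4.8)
The statement to prove is that the inclusion $\iota\colon \cat{CoSh}(X;\cat{k})\hookrightarrow \cat{PreCoSh}(X;\cat{k})$ of $\Vectk$-valued cosheaves into $\Vectk$-valued precosheaves admits a right adjoint $\hat{a}$; this right adjoint \emph{is} the cosheafification, and its existence is exactly the assertion that $\cat{CoSh}(X;\cat{k})$ is coreflective, with counit $\iota\hat{a}\hat{F}\to \hat{F}$ dual to the unit $F\to \iota a F$ of ordinary sheafification. The plan is to produce $\hat{a}$ from an adjoint functor theorem rather than from an explicit construction. The first and easiest input is that $\iota$ preserves all small colimits: colimits of precosheaves are computed objectwise over $\Open(X)$, and for each open cover the cosheaf axiom demands that a canonical coequalizer diagram be exact, which is itself a colimit condition; since colimits commute with colimits, an objectwise colimit of cosheaves is again a cosheaf. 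Hence $\cat{CoSh}(X;\cat{k})$ is closed under colimits in $\cat{PreCoSh}(X;\cat{k})$ and $\iota$ is cocontinuous.

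Since $\cat{PreCoSh}(X;\cat{k})=\Vectk^{\Open(X)}$ is a functor category from the small category $\Open(X)$ into the locally presentable (indeed Grothendieck) category $\Vectk$, it is itself locally presentable and in particular cocomplete. The strategy is then to invoke the adjoint functor theorem in its form for right adjoints: a cocontinuous functor out of a cocomplete category has a right adjoint once the dual solution-set condition is met. Because $\iota$ is already known to be cocontinuous, everything reduces to verifying this solution-set condition, equivalently to showing that $\cat{CoSh}(X;\cat{k})$ is an accessible, colimit-closed subcategory of the locally presentable category $\cat{PreCoSh}(X;\cat{k})$; in that case it is itself locally presentable and cocontinuity of $\iota$ forces it to be a left adjoint, whose right adjoint is the desired cosheafification.

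I expect the solution-set/accessibility step to be the main obstacle, and it is precisely where cosheaves behave less tamely than sheaves. Under the duality $\cat{CoSh}(X;\cat{k})^{\op}\cong \cat{Sh}(X;\Vectk^{\op})$, cosheafification corresponds to sheafification of $\Vectk^{\op}$-valued presheaves, and one is tempted to copy the usual plus-construction. This fails: the opposite of a locally presentable category is locally presentable only for preorders, so $\Vectk^{\op}$ is not locally presentable, and concretely it fails AB5 because cofiltered limits of vector spaces are not exact ($\lim^{1}$ of a tower of vector spaces can be nonzero). Thus filtered colimits do not commute with finite limits in $\Vectk^{\op}$ and the naive dual of the sheaf argument collapses. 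To get around this I would not dualize, but argue inside the locally presentable category $\cat{PreCoSh}(X;\cat{k})$ directly: choose a regular cardinal $\kappa$ for which $\Vectk$ and $\Open(X)$ are $\kappa$-accessible, show that $\kappa$-presentable precosheaves admit cosheafifications assembled from a set of data (using that every vector space is a filtered colimit of finite-dimensional ones and that $\Vectk$ is co-well-powered), and thereby extract the required solution set. Finally, in the Alexandrov setting actually used in this paper the abstract machinery can be bypassed entirely: by \cref{theorem:functors_on_posets_are_sheaves_and_cosheaves} cosheaves on the down-sets of a preordered set are just functors on the poset, so cosheafification is computed explicitly by a left Kan extension along costalks, and the general coreflectivity theorem is needed only to phrase the inverse-image functor abstractly.
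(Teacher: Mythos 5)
This statement is imported: the paper cites \cite{curry2013cosheafification} and gives no proof of its own, so there is nothing internal to compare your argument against. Judged on its own terms, your architecture is the standard (and correct) one: precosheaves $\Vectk^{\Open(X)}$ form a locally presentable category, cosheaves are closed under colimits because the cosheaf axiom is a colimit condition and colimits commute with colimits, and a colimit-closed, accessibly embedded full subcategory of a locally presentable category is coreflective (equivalently, the cocontinuous inclusion satisfies the dual solution-set condition and so has a right adjoint). Your diagnosis of why one cannot simply dualize the plus-construction is also exactly right: $\cat{CoSh}(X;\Vectk)^{\op}\cong\cat{Sh}(X;\Vectk^{\op})$, and $\Vectk^{\op}$ is neither locally presentable nor AB5 (your $\varprojlim^1$ example, e.g.\ the tower $t^n\mathbf{k}[t]\subseteq\mathbf{k}[t]$, is a correct witness), so the usual sheafification machinery is unavailable. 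This is the genuinely nontrivial point of the cited result and you have located it precisely.

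The gap is that the step you yourself flag as the main obstacle is not actually carried out. ``Choose $\kappa$ \dots\ show that $\kappa$-presentable precosheaves admit cosheafifications assembled from a set of data'' restates the problem rather than solving it -- one cannot assume cosheafification of presentable objects in order to extract the solution set that proves cosheafification exists. The clean way to close this is to observe that the cosheaf condition asks that a \emph{set} of natural transformations be invertible: for each open $U$ and each cover $\mathcal{U}$ of $U$, the comparison map $\colim_{\check{C}(\mathcal{U})}F\to F(U)$ is a natural transformation between two accessible (indeed cocontinuous) functors $\cat{PreCoSh}(X;\cat{k})\to\Vectk$, and since $X$ has only a set of opens and covers, the full subcategory on which all these transformations are isomorphisms is an iso-inserter/equifier of accessible functors, hence accessible and accessibly embedded (Ad\'amek--Rosick\'y). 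Combined with your colimit-closure argument this yields local presentability of $\cat{CoSh}(X;\cat{k})$ and coreflectivity of the inclusion. Without that (or an equivalent argument), the proposal is a correct plan with its decisive lemma unproven. Your closing remark is nevertheless apt for this paper: in the Alexandrov setting the result is only needed to define $f^{-1}$ for cosheaves, and there cosheaves are just functors on the poset, so the coreflector is computed by costalks and a left Kan extension without any of the abstract machinery.
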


Thus, we have the following defintion.

\begin{definition}
\label{def:inverse_image_cosheaf}
Let $F$ be a cosheaf of vector spaces on $X$. The inverse image of $F$ by $f$, denoted $f^{-1}F$, is the cosheaf on $Y$ associated to the precosheaf:
\[V\mapsto \lim\limits_{U}F(U)\]
where $V\subseteq Y$ is open and $U$ ranges through the family of open neighborhoods of $f(V)$ in $X$. 
\end{definition}

\begin{example}
\label{example:inverse_image_cosheaf_on_a_poset}
If $(X,\le_X)$ and $(Y,\le_Y)$ are preordered sets and let $f:X\to Y$ be continuous with respect to the Alexandrov topologies on $X$ and $Y$. If $F$ is a sheaf of vector spaces on $Y$, then $f^{-1}F_x\cong F_{f(x)}$. If $F$ is a cosheaf of vector spaces on $Y$, then we also have $f^{-1}F_x\cong F_{f(x)}$.
\end{example}

\begin{theorem}{\cite[Theorem 5.3.2]{curry2013sheaves}}
\label{theorem:direct_inverse_image_adjunction_cosheaf}
Let $(X,\le_X)$ and $(Y,\le_Y)$ be preordered sets with the Alexandrov topology and let $f:X\to Y$ be a continuous map. Then there exist canonical isomorphisms
\[\Hom_{\cat{Sh}(X;\cat{k})}(f^{-1}G,F)\cong \Hom_{\cat{Sh}(Y;\cat{k})}(G,f_{*}F)\]
and 
\[\Hom_{\cat{CoSh}(Y;\cat{k})}(f_{\dagger}F,G)\cong \Hom_{\cat{CoSh}(X;\cat{k})}(F,f^{-1}G)\]
for any functor $G:\cat{Y}\to \cat{Vect}_{\cat{k}}$ and any functor $F:\cat{X}\to \cat{Vect}_{\cat{k}}$. In other words, $f^{-1}$ is the left adjoint of $f_*$ and $f_{\dagger}$ is the left adjoint of $f^{-1}$. Consequently, the functor $f^{-1}$ is exact, $f_{\dagger}$ is right exact and $f_{*}$ is left exact.
\end{theorem}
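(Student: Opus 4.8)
The plan is to transport the entire statement across the isomorphism of categories of \cref{theorem:functors_on_posets_are_sheaves_and_cosheaves}, so that both adjunctions become the standard adjunctions between a precomposition functor and its Kan extensions. First I would use \cref{theorem:functors_on_posets_are_sheaves_and_cosheaves} to identify $\cat{Sh}(X;\cat{k})$ with the functor category $\cat{X}^{\Vectk}$ via stalks, $F\mapsto (x\mapsto F_x)$, recalling that in the Alexandrov topology the smallest open containing $x$ is $U_x$ so that $F_x=F(U_x)$, and likewise $\cat{Sh}(Y;\cat{k})\isom \cat{Y}^{\Vectk}$; the cosheaf categories are identified the same way through costalks on principal down-sets, using $F(D_x)=F_x$ as recorded in the proof of \cref{theorem:functors_on_posets_are_sheaves_and_cosheaves}. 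Since $f$ is order-preserving it induces a functor $f\colon\cat{X}\to\cat{Y}$, and by \cref{example:inverse_image_cosheaf_on_a_poset} the inverse image satisfies $f^{-1}G_x\isom G_{f(x)}$ in both the sheaf and the cosheaf case. Hence, under the identifications above, $f^{-1}$ is exactly the precomposition functor $f^{*}=(-)\circ f\colon \cat{Y}^{\Vectk}\to \cat{X}^{\Vectk}$.

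Next I would identify the two direct image functors with Kan extensions along $f$. Evaluating the sheaf direct image on a principal up-set $U_y$ gives $(f_{*}F)(U_y)=F(f^{-1}(U_y))=\lim_{x\,:\,y\le f(x)}F_x$, which is precisely the pointwise formula for the right Kan extension, so $f_{*}\isom \Ran_f$. Dually, evaluating the cosheaf direct image on a principal down-set $D_y$ gives $(f_{\dagger}F)(D_y)=F(f^{-1}(D_y))=\colim_{x\,:\,f(x)\le y}F_x$, which is the pointwise formula for the left Kan extension, so $f_{\dagger}\isom \Lan_f$. Here I would use that $f^{-1}(U_y)=\{x\,:\,y\le f(x)\}$ and $f^{-1}(D_y)=\{x\,:\,f(x)\le y\}$, that these sets are (up to equivalence) the indexing categories of the relevant comma categories, and that $\Vectk$ is complete and cocomplete so the pointwise Kan extensions exist; it suffices to check agreement on principal opens since a sheaf (resp. cosheaf) is determined by its stalks (resp. costalks).

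With these identifications in hand, both adjunctions follow at once from the standard adjoint triple $\Lan_f\dashv f^{*}\dashv \Ran_f$ between functor categories: the first display is $f^{*}\dashv \Ran_f$, i.e. $f^{-1}\dashv f_{*}$, and the second is $\Lan_f\dashv f^{*}$, i.e. $f_{\dagger}\dashv f^{-1}$. For the exactness claims, (co)kernels in $\cat{X}^{\Vectk}$ and $\cat{Y}^{\Vectk}$ are computed pointwise, so $f^{-1}=f^{*}$, being mere reindexing, is exact; $f_{*}=\Ran_f$ is a right adjoint and hence left exact, while $f_{\dagger}=\Lan_f$ is a left adjoint and hence right exact.

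I expect the main obstacle to be bookkeeping rather than anything conceptual: because cosheaves are taken with respect to the opposite (down-set) topology, one must track the variance carefully so that the cosheaf direct image lands on the left Kan extension (a colimit) while the sheaf direct image lands on the right Kan extension (a limit), and one must check that the comma-category indexing in the pointwise Kan extension formula matches $f^{-1}$ of the principal opens. Once the handedness is pinned down, the verification that the direct images agree with the Kan extensions on all opens is routine, since sheaves and cosheaves here are determined by their values on principal up- and down-sets.
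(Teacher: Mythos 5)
Your argument is correct, and it is essentially the standard one. Note that the paper itself gives no proof of this statement; it is imported verbatim from Curry's thesis, so there is nothing internal to compare against. Your route --- transport everything across the isomorphism of \cref{theorem:functors_on_posets_are_sheaves_and_cosheaves}, identify $f^{-1}$ with precomposition via \cref{example:inverse_image_cosheaf_on_a_poset}, identify $f_*$ and $f_\dagger$ with the pointwise right and left Kan extensions by evaluating on the principal opens $U_y$ and $D_y$, and then quote the adjoint triple $\Lan_f\dashv f^*\dashv\Ran_f$ --- is precisely the proof underlying the cited reference, and it meshes with the Kan-extension description already sketched in the paper's proof of \cref{theorem:functors_on_posets_are_sheaves_and_cosheaves}. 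All the individual identifications check out: $f^{-1}(U_y)=\{x: y\le f(x)\}$ and $f^{-1}(D_y)=\{x: f(x)\le y\}$ are (as posets) the comma categories $y\downarrow f$ and $f\downarrow y$, and completeness and cocompleteness of $\Vectk$ gives existence of the pointwise Kan extensions. The exactness claims follow as you say. The one point worth pinning down explicitly, which you flag but do not fully resolve, is that the cosheaf half of the statement only parses if $\cat{CoSh}(X;\cat{k})$ and $\cat{CoSh}(Y;\cat{k})$ are taken over the \emph{opposite} Alexandrov (down-set) topologies, consistent with \cref{def:persistence_modules_as_cosheaves}; your evaluation on $D_y$ implicitly makes that choice, and it is the correct one (an order-preserving $f$ is continuous for the down-set topologies as well, so $f_\dagger$ is defined there). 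Finally, a cosmetic remark: by the paper's stated convention the functor category should be written $\Vectk^{\cat{X}}$ rather than $\cat{X}^{\Vectk}$, though the paper itself is inconsistent on this point.
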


We will denote by $Rf_*$ and $Lf_{\dagger}$ the right derived functor of $f_*$ and the left derived functor of $f_{\dagger}$ respectively.

\begin{definition}
\label{def:external_product_on_prosets}
Let $(P,\le)$ be a preordered set. Let $F,G:\cat{P}\to \mathbf{Vect}_{\mathbf{k}}$ be functors. We define $(F\shtensor G):\cat{P}\to \mathbf{Vect}_{\mathbf{k}}$ to be the functor $(F\shtensor G)_x:=F_x\otimes_{\mathbf{k}}G_x$ and call it the \emph{sheaf tensor product of $F$ and $G$}. As the name suggest one can check that this definition coincides with the usual sheaf tensor product definition when we are considering $F$ and $G$ as sheaves on $P$ with the Alexandrov topology. Furthermore, this definition also coincides with the cosheaf tensor product definition when we consider $F$ and $G$ as cosheaves on $P$ with the opposite Alexandrov topology. Let $P\times P$ be endowed with the product order induced from $(P,\le)$. Define the functor $F\boxtimes G:\cat{P}\times \cat{P}\to \mathbf{Vect}_{\mathbf{k}}$ by $F\boxtimes G=\pi_1^{-1}F\shtensor \pi_2^{-1}G$ where $\pi_i$ are the canonical projections for $i=1,2$. We call $F\boxtimes G$ \emph{the external tensor product of $F$ and $G$}. As the name suggests, this definition coincides with the external tensor product of $F$ and $G$ when considered as sheaves  (\cref{section:background_sheaves}). In general, it is not possible to define an external tensor product for cosheaves as the inverse image functors are not defined as cosheafification is not guaranteed to exist. However for cosheaves of vector thanks to \cref{theorem:cosheafification}  we do not have to worry about that. Furthermore, because $-\shtensor-$ is the same for sheaves and cosheaves and because the inverse image functors for sheaves and cosheaves on pre-ordered sets agree (\cref{example:inverse_image_cosheaf_on_a_poset}) it turns out that we can define $-\boxtimes-$ for cosheaves and that it agrees with the sheaf definition (on preordered sets). Because we are also working with sheaves and cosheaves of vector spaces we have that $-\shtensor-$ is exact. As inverse image functors are also exact it follows that $-\boxtimes-$ is an exact functor. Thus, when we consider it in the derived setting we don't have to write $-\boxtimes^L-$ for its left derived functor, as one usually has to in the general case.
\end{definition}

\subsection{Derived category}
Let $\cat{A}$ be an abelian category. Suppose $\cat{A}$ has enough projectives and injectives. We assume the reader is familiar with the category of chain complexes of objects in $\cat{A}$, $C(\cat{A})$. This section is dedicated to understanding the morphisms in the derived category of $\cat{A}$. This will be necessary in our proof of Theorem 1.1. This exposition to derived categories is borrowed from Steffen Oppermann's notes \cite{oppermann2016homological}.

\begin{definition}
\label{def:quasi_isomorphism}
Let $X$ and $Y$ be in $C(\cat{A})$ and let $f\in \Hom_{c(\cat{A})}(X,Y)$. The chain map $f$ is a \emph{quasi-isomorphism} if for all $j\in \mathbb{Z}$, $H^j(f)$ is an isomorphism.
\end{definition}

\begin{definition}
\label{def:roof}
Let $X$ and $Y$ be in $C(\cat{A})$. A \emph{roof from $X$ to $Y$} is a diagram in $C(\cat{A})$ of the form:
\begin{figure}[H]
\centering
\begin{tikzcd}
&\tilde{X}\arrow[dl,"q"]\arrow[dr,"f"]\\
X&&Y
\end{tikzcd}
\end{figure}
where $q$ is a quasi-isomorphism. We write $f\cdot q^{-1}$ for this roof.
\end{definition}

\begin{lemma}[Ore condition]
\label{lemma:Ore_condition}
Let $X,\tilde{X}$ and $Y$ be in $C(\cat{A})$. Given the solid part in the diagram below, where $q$ is a quasi-isomorphism, it is possible to find the dashed part including $\tilde{Y}$ such that $r$ is also a quasi-isomorphism and that the full diagram commutes in $K(\cat{A})$.
\begin{figure}[H]
\centering
\begin{tikzcd}
\tilde{X}\arrow[r,"f"]\arrow[d,"q"]&Y\arrow[d,dashed,"r"]\\
X\arrow[r,dashed,"g"]&\tilde{Y}
\end{tikzcd}
\end{figure}
Dually, given the dashed part we can find the solid part.
\end{lemma}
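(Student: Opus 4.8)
The plan is to realize $\tilde Y$ as a homotopy pushout of the span $X \xleftarrow{q} \tilde X \xrightarrow{f} Y$, built concretely as a mapping cone, and then to read off $g$ and $r$ from the canonical structure maps. Writing $u := (-q, f)\colon \tilde X \to X\oplus Y$, I would set
\[ \tilde Y := \mathrm{Cone}(u)=\mathrm{Cone}\!\left(\tilde X \xrightarrow{(-q,\,f)} X\oplus Y\right), \]
and let $\iota\colon X\oplus Y\to \tilde Y$ be the canonical degreewise inclusion attached to any mapping cone. Define $g:=\iota|_{X}\colon X\to \tilde Y$ and $r:=\iota|_{Y}\colon Y\to \tilde Y$. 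All three maps live in $C(\cat{A})$, so they descend to $K(\cat{A})$.

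First I would check the square commutes in $K(\cat{A})$. This is immediate from the defining property of the mapping cone: the composite $\tilde X \xrightarrow{u} X\oplus Y \xrightarrow{\iota}\tilde Y$ is canonically null-homotopic, an explicit contracting homotopy being $a\mapsto (a,0,0)\in \tilde Y^{\bullet}=\tilde X[1]\oplus X\oplus Y$ (up to the sign conventions in the cone differential). Unwinding $\iota\circ u\simeq 0$ on the two summands gives $-g\circ q + r\circ f\simeq 0$, that is $g\circ q\simeq r\circ f$ in $K(\cat{A})$, as required.

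The main point, and the only real work, is to show that $r$ is a quasi-isomorphism. Here I would use the long exact cohomology sequence attached to the distinguished triangle $\tilde X \xrightarrow{u} X\oplus Y \xrightarrow{\iota}\tilde Y \xrightarrow{\partial}\tilde X[1]$ in $K(\cat{A})$ (this triangulated structure is available prior to, and independently of, the localization, so there is no circularity). On cohomology the first map is $(-H^nq,\,H^nf)$; since $q$ is a quasi-isomorphism, $H^nq$ is invertible for every $n$, so this map is split injective. Exactness then forces every connecting map $\partial$ to vanish, and the long exact sequence breaks into short exact sequences
\[ 0\to H^n\tilde X \xrightarrow{(-H^nq,\,H^nf)} H^nX\oplus H^nY \xrightarrow{(H^ng,\,H^nr)} H^n\tilde Y\to 0. \]
A direct diagram chase, using only that $H^nq$ is invertible, then shows $H^nr$ is bijective: injectivity because any $(0,\bar y)$ in the kernel of $(H^ng,H^nr)$ comes from some $\bar a\in H^n\tilde X$ with $-H^nq(\bar a)=0$, forcing $\bar a=0$ and hence $\bar y=0$; surjectivity because any class in $H^n\tilde Y$ can be corrected by an element of $\mathrm{im}\,H^nu$ to land in $\mathrm{im}\,H^nr$. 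Thus $r$ is a quasi-isomorphism.

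For the dual statement I would run the mirror-image construction: given the dashed data with $r$ a quasi-isomorphism, set $\tilde X:=\mathrm{Cone}\!\left(X\oplus Y \xrightarrow{(g,\,-r)}\tilde Y\right)[-1]$, the shifted cone (homotopy pullback / mapping fibre) of the cospan $X\xrightarrow{g}\tilde Y\xleftarrow{r}Y$, and take $q,f$ to be its two canonical projections onto $X$ and $Y$. The same long-exact-sequence argument, now using that $H^nr$ is an isomorphism to force the connecting maps to vanish, shows $q$ is a quasi-isomorphism, and the cone's null-homotopy again yields $g\circ q\simeq r\circ f$. I expect the only obstacle to be bookkeeping: pinning down the signs in the cone differential so that the homotopy in the commutativity step and the identifications in the short exact sequence are literally correct. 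The conceptual content is entirely contained in the vanishing of $\partial$, which is driven by $q$ (respectively $r$) being a quasi-isomorphism.
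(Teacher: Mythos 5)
Your proof is correct, and it is the standard argument. Note that the paper itself does not prove this lemma: it is quoted as known background from Oppermann's lecture notes, so there is no in-paper proof to compare against. Your construction of $\tilde{Y}$ as the cone of $(-q,f)\colon \tilde{X}\to X\oplus Y$, the null-homotopy of $\iota\circ u$ giving commutativity in $K(\cat{A})$, and the long exact cohomology sequence argument (split injectivity of $(-H^nq,H^nf)$ forcing the connecting maps to vanish, then the diagram chase showing $H^nr$ is bijective) is exactly how this is done in the references the paper leans on; the dual statement via the shifted cone of $(g,-r)$ is likewise correct, with the surjectivity of $H^nr$ killing the connecting maps. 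The only loose ends are the sign conventions you already flag, and the (true, and worth the one sentence you give it) observation that the triangulated structure on $K(\cat{A})$ and its long exact sequences are available before localization, so the argument is not circular.
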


\begin{definition}
\label{def:equivalent_roofs}
Let $X$ and $Y$ be in $C(\cat{A})$. Two roofs $f_1\cdot q_1^{-1}$ and $f_2\cdot q_2^{-1}$ are \emph{equivalent} if there is a chain complex $H$ and quasi-isomorphisms $h_i:H\to \tilde{X}_i$ for $i=1,2$, such that the following diagram commutes:
\begin{figure}[H]
\centering
\begin{tikzcd}
&\tilde{X}_1\arrow[dl,"q_1"]\arrow[dr,"f_1"]\\
X&H\arrow[u,"h_1"]\arrow[d,"h_2"]&Y\\
&\tilde{X}_2\arrow[ur,"q_2"]\arrow[ul,"f_2"]
\end{tikzcd}
\end{figure}
It is a standard result that this is indeed an equivalence relation between roofs from $X$ to $Y$.
\end{definition}

We now recall the composition of roofs.

\begin{definition}
\label{def:roof_composition}
Let $X,Y$ and $Z$ be in $C(\cat{A})$. Suppose $f\cdot q^{-1}$ is a roof from $X$ to $Y$ and that $g\cdot r^{-1}$ is a roof from $Y$ to $Z$. The \emph{composition of roofs} $f\cdot q^{-1}$ and $g\cdot r^{-1}$ is defined the be the equivalence class of the roof $g\tilde{f}\cdot (q\tilde{r})^{-1}$ where $\tilde{f}$ and $\tilde{r}$ exist by \cref{lemma:Ore_condition}. More specifically, the composition is a roof from $X$ to $Z$ given by the following diagram:
\begin{figure}[H]
\centering
\begin{tikzcd}
&&\tilde{\tilde{X}}\arrow[dl,"\tilde{r}"]\arrow[dr,"\tilde{f}"]\\
&\tilde{X}\arrow[dl,"q"]\arrow[dr,"f"]&&\tilde{Y}\arrow[dl,"r"]\arrow[dr,"g"]\\
X&&Y&&Z
\end{tikzcd}
\end{figure}
\end{definition}

\begin{definition}
\label{def:derived_category}
The \emph{derived category} $D(\cat{A}$ of $\cat{A}$ is given by:
\begin{itemize}
\item $\text{Ob}(D(\cat{A}))=\text{Ob}(C(\cat{A}))$.
\item For $X,Y$ in $\text{Ob}(D(\cat{A}))$, $\Hom_{D(\cat{A})}(X,Y)=\dfrac{\{\text{roofs from X to Y}\}}{\text{equivalence of roofs}}$
\end{itemize}
\end{definition}

To each object $X$ in $\cat{A}$ we can associate to it the corresponding complex concentrated in degree $0$, and to each $f\in \Hom_{\cat{A}}(X,Y)$ we can associate to it the equivalence class  of the roof $f\cdot \mathbf{1}_X^{-1}$. This yields a fully faithful \emph{localization functor} $\iota_{\cat{A}}:\cat{A}\to D(\cat{A})$. More generally, if $f:X\to Y$ is a chain map in $C(\cat{A})$ we can associate to it the equivalence class of the roof $f\cdot \cat{1}_{X}^{-1}$, although this is no longer a fully faithful localization.

\section{Convolution of persistence modules}
\label{section:convolution}

In this section we define sheaf and cosheaf convolutions of derived complexes of persistence modules. We will use these two operations to define convolution distances of derived complexes of persistence modules, that turn out to be equivalent. We show a relationship between this convolution distance and the classical interleaving distance. For the sake of generality, we will be working with a preordered set with a compatible abelian group structure $(P,\le,+,0)$. However all the examples we calculate will be for the case $P=\mathbb{R}$ and the convolution distance will be defined for the case $P=\mathbb{R}^n$. We assume $P$ is given the Alexandrov topology induced by $\le$, or the opposite topology. We denote these topological spaces by $P_{\le}$ and by $P_{\le^{op}}$ respectively. In particular, the open sets in $P_{\le}$ are the up-sets and the open sets in $P_{\le^{op}}$ are the down-sets with respect to $\le$. We denote by $s:P\times  P\to P$ the addition map $s(x,y)=x+y$. An important observation is that $s: P_{\le}\times P_{\le}\to P_{\le}$ and $s:P_{\le^{op}}\times P_{\le^{op}}\to P_{\le^{op}}$ are both continuous maps of topological spaces. This follows from the compatibility of the operation $+$ with the preorder $\le$. Therefore we can talk about the direct image functors $s_{\dagger}$ and $s_*$ and their derived functors $Ls_{\dagger}$ and $Rs_*$. We also denote by $\pi_i:P\times P\to P$ for $i=1,2$ the projections onto the first and second coordinates respectively.

\subsection{Cosheaf convolution}
Here we restrict ourselves to the cosheaf point of view of persistence modules as in \cref{def:persistence_modules_as_cosheaves} and we define the cosheaf convolution of bounded derived complexes of persistence modules.

\begin{definition}
\label{def:convolution_of_cosheaves_poset_topology}
Let $M$ and $N$ be persistence modules, define $M\bullet N:=s_{\dagger}(M\boxtimes N)$. If $M$ and $N$ are in $D^b(\cat{Vect}_{\cat{k}}^{\cat{P}})$,  we define the \emph{cosheaf convolution} to be the left derived functor 
\[M\bullet^L N:= L s_{\dagger}(M\boxtimes N).\]
\end{definition}

\begin{theorem}[\textbf{Theorem 1.2}]
\label{theorem:convolution_of_cosheaves_is_graded_tensor}
Let $M$ and $N$ be in $D^b(\cat{Vect}_{\cat{k}}^{\cat{P}})$. Then there is a canonical isomorphism $M\grtensor^L N\cong M\bullet^L N$.
\end{theorem}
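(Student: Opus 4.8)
The plan is to show that both functors, at the level of underived functors, compute the same thing on stalks/costalks, and that the derived functors agree because the derivation is carried out compatibly. Recall that $M \bullet^L N = Ls_{\dagger}(M \boxtimes N)$ where $M \boxtimes N = \pi_1^{-1}M \shtensor \pi_2^{-1}N$, and that $M \grtensor^L N$ is the left derived functor of the graded tensor product defined by $(M\grtensor N)_x = \colim_{a+b \le x} M_a \tensor_{\cat{k}} N_b$. The key observation I would establish first is that at the \emph{underived} level there is already a canonical isomorphism $M \grtensor N \cong M \bullet N = s_{\dagger}(M \boxtimes N)$. This should follow by unwinding the definitions: by \cref{def:direct_image_cosheaf} the cosheaf direct image satisfies $s_{\dagger}(M\boxtimes N)(D) = (M\boxtimes N)(s^{-1}(D))$ for a down-set $D$, and by \cref{def:sections_and_cosections} the costalk/cosection of a cosheaf is computed as a colimit of the stalk values. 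Using \cref{example:inverse_image_cosheaf_on_a_poset}, the stalks of the external tensor product are $(M\boxtimes N)_{(a,b)} \cong M_a \tensor_{\cat{k}} N_b$, and the preimage $s^{-1}(D_x)$ of a principal down-set is exactly $\{(a,b) : a + b \le x\}$. Taking the colimit (cosection) over this set recovers precisely $\colim_{a+b \le x} M_a \tensor_{\cat{k}} N_b = (M\grtensor N)_x$, giving the underived isomorphism naturally in $M$ and $N$.

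Having the underived isomorphism $- \grtensor - \cong s_{\dagger}(-\boxtimes-)$ as bifunctors, the remaining task is to pass to derived functors and check that the two left derivations coincide. Here I would use that $-\boxtimes-$ is \emph{exact} (as emphasized in \cref{def:external_product_on_prosets}, we do not need to write $-\boxtimes^L-$), so that the composite functor $s_{\dagger}(-\boxtimes-)$ is derived simply by replacing the arguments with suitable resolutions and applying $Ls_{\dagger}$. The standard tool is a Grothendieck-type composition-of-derived-functors argument: since $\boxtimes$ is exact it sends a projective (or more precisely $\grtensor$-flat, by \cref{prop:classification_into_flats_and_injectives}) resolution to an acyclic-for-$s_{\dagger}$ resolution, so $L(s_{\dagger} \circ \boxtimes) \cong Ls_{\dagger} \circ \boxtimes$. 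I would verify that a resolution computing $\grtensor^L$ (by $\grtensor$-flat modules) is sent by $\boxtimes$ to a complex that is acyclic for $s_{\dagger}$, i.e. consists of $Ls_{\dagger}$-acyclic objects, so that the same resolution computes both derived functors and the underived natural isomorphism upgrades to a derived one.

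The main obstacle I expect is precisely this compatibility of the resolutions: one must confirm that the class of modules used to compute $-\grtensor^L-$ (the $\grtensor$-flat modules) is taken by $\boxtimes$ into $Ls_{\dagger}$-acyclic cosheaves, and that $s_{\dagger}$ is acyclic on those objects. Concretely, I would need to show that if $M$ is $\grtensor$-flat then $M \boxtimes N$ (for $N$ in a suitable resolving class) has no higher left-derived direct images under $s_{\dagger}$, so that $Ls_{\dagger}(M\boxtimes N)$ is concentrated in degree $0$ and equals $M \grtensor N$. By \cref{prop:classification_into_flats_and_injectives} the relevant flats are the interval modules $\mathbf{k}[U]$ on meet-closed up-sets, and I would check the acyclicity by a direct computation of the costalks of the relevant colimits, using that $s^{-1}$ of a principal down-set has a cofinal structure making the colimit exact. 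Once this acyclicity is in hand, the isomorphism $M\grtensor^L N \cong M\bullet^L N$ follows by deriving the underived natural isomorphism of bifunctors along a common adapted resolution, and naturality is inherited from the underived level.
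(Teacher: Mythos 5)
Your proposal is correct and its first half is exactly the paper's proof: the paper establishes the underived isomorphism $M\bullet N\cong M\grtensor N$ by precisely the costalk computation you describe, namely $(s_{\dagger}(M\boxtimes N))_x\cong (M\boxtimes N)(s^{-1}(D_x))\cong\colim_{a+b\le x}(M_a\otimes_{\cat{k}}N_b)=(M\grtensor N)_x$. Where you diverge is the passage to derived functors. The paper disposes of this in one sentence by invoking the uniqueness property of derived functors; you instead insist on a composition-of-derived-functors argument, checking that the resolving class used to compute $-\grtensor^L-$ is carried by the exact functor $-\boxtimes-$ into $Ls_{\dagger}$-acyclic objects, so that a single adapted resolution computes both sides. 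This extra care addresses a genuine subtlety that the paper's appeal to uniqueness elides: $M\bullet^LN$ is by definition $Ls_{\dagger}$ applied to $M\boxtimes N$, not a priori the total left derived functor of the composite $s_{\dagger}(-\boxtimes-)$, and identifying the two is exactly the acyclicity statement you flag as the main obstacle. You leave that verification as a ``would check,'' but it does go through cleanly: by \cref{prop:enough_injectives_and_projectives} one may resolve $M$ and $N$ by direct sums of principal-up-set modules $\mathbf{k}[U_a]$, and $\mathbf{k}[U_a]\boxtimes\mathbf{k}[U_b]\cong\mathbf{k}[U_{(a,b)}]$ is again projective (hence $s_{\dagger}$-acyclic) in $\cat{Vect}_{\cat{k}}^{\cat{P}\times\cat{P}}$, which is somewhat more direct than routing through the meet-closed flats of \cref{prop:classification_into_flats_and_injectives}. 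In short: same core computation, with your version making explicit a derived-category bookkeeping step the paper takes for granted.
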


\begin{proof}
Let $x\in \mathbb{R}^n$ and suppose that $M$ and $N$ are persistence modules. Then we have the following canonical isomorphisms
\begin{gather*}
(s_{\dagger}(M\boxtimes N))_x\cong s_*(M\boxtimes N)(D_x):=(M\boxtimes N)(s^{-1}(D_x))\cong \colim\limits_{a+b\le x}(M\boxtimes N)_{(a,b)}\\
:=\colim\limits_{a+b\le x}(\pi_1^{-1}M\shtensor \pi_2^{-1}N)_{(a,b)}\cong\colim\limits_{a+b\le x}(M_{\pi_1(a,b)}\otimes_{\cat{k}} N_{\pi_2(a,b)})=\colim\limits_{a+b\le x}(M_a\otimes_{\cat{k}} N_b) 
\end{gather*}
Recall that $(M\grtensor N)_x:=\colim_{a+b\le x}(M_a\otimes_{\mathbf{k}} N_b)$. Thus, $M\bullet N\cong M\grtensor N$. The derived statement $M\grtensor^L\cong M\bullet^LN$ follows immediately by the uniqueness property of derived functors, see for example \cite{grothendieck1957quelques}.
\end{proof}

Due to \cref{theorem:convolution_of_cosheaves_is_graded_tensor} and known computations for $\grtensor$ between interval modules, see for example \cite{bubenik2019homological}, we have the following proposition.

\begin{proposition}[\textbf{Second part of Proposition 1.4}]
\label{prop:sheaf_convolution_of_interval_modules} Let $P=\mathbb{R}$ and consider two interval modules, $M=\mathbf{k}[a,b)$ and $N=\mathbf{k}[c,d)$. Then $(M\bullet^LN)_0\cong M\bullet N\cong \mathbf{k}[a+c,\min(b+c,a+d))$, $(M\bullet^LN)_1\cong \mathbf{k}[\max(a+d,b+c),b+d)$ and $(M\bullet^LN)_i=0$ for $i\ge 2$.
\end{proposition}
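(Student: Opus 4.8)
The plan is to compute $M \bullet^L N$ by resolving the interval modules and applying \cref{theorem:convolution_of_cosheaves_is_graded_tensor}, which reduces the problem to computing the derived graded tensor product $M \grtensor^L N$ for $M = \mathbf{k}[a,b)$ and $N = \mathbf{k}[c,d)$. The degree-zero part is the ordinary convolution, which by \cref{theorem:convolution_of_cosheaves_is_graded_tensor} equals $M \grtensor N$; its value is the already-known formula $(M \grtensor N)_x = \colim_{a'+b' \le x} (M_{a'} \otimes_{\mathbf{k}} N_{b'})$, and one checks directly from \cref{def:graded_module_tensor_product} that this colimit is $\mathbf{k}$ precisely when $x$ lies in the interval $[a+c, \min(b+c, a+d))$ and $0$ otherwise. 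The remaining work is to pin down the higher derived term $(M\bullet^L N)_1$ and to verify that everything above degree $1$ vanishes.

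First I would replace one of the factors, say $M = \mathbf{k}[a,b)$, by a flat resolution. Using \cref{prop:classification_into_flats_and_injectives}, the free persistence modules on principal up-sets are $\grtensor$-flat, and the interval $\mathbf{k}[a,b)$ sits in a short exact sequence built from the up-sets $U_a = [a,\infty)$ and $U_b = [b,\infty)$, namely $0 \to \mathbf{k}[U_b] \to \mathbf{k}[U_a] \to \mathbf{k}[a,b) \to 0$. Since both $\mathbf{k}[U_a]$ and $\mathbf{k}[U_b]$ are flat, the two-term complex $[\mathbf{k}[U_b] \to \mathbf{k}[U_a]]$ is a flat resolution of $M$, so $M \grtensor^L N$ is computed by the complex $[\mathbf{k}[U_b] \grtensor N \to \mathbf{k}[U_a] \grtensor N]$ concentrated in homological degrees $1$ and $0$. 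This immediately gives vanishing in degrees $\ge 2$, establishing the third bullet.

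Next I would compute the two terms $\mathbf{k}[U_a] \grtensor N$ and $\mathbf{k}[U_b] \grtensor N$ explicitly. Tensoring a flat up-set module $\mathbf{k}[U_a]$ with the interval $N = \mathbf{k}[c,d)$ is a translation-type operation: one checks that $\mathbf{k}[U_a] \grtensor \mathbf{k}[c,d) \cong \mathbf{k}[a+c, \infty)$-supported data restricted appropriately, and more precisely that the colimit defining the tensor product yields the interval $\mathbf{k}[a+c, a+d)$ shifted to reflect the starting point $a$. I would then identify the chain map between the two terms induced by the inclusion $\mathbf{k}[U_b] \hookrightarrow \mathbf{k}[U_a]$ and read off its kernel and cokernel degree by degree. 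The cokernel recovers the degree-zero answer $\mathbf{k}[a+c, \min(b+c, a+d))$, and the kernel gives $(M\bullet^L N)_1 = \mathbf{k}[\max(a+d, b+c), b+d)$.

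The main obstacle will be the careful bookkeeping of the four endpoints $a+c$, $a+d$, $b+c$, $b+d$ and verifying that the interval formulas are correct across all orderings of $a+d$ and $b+c$ (the $\min$ and $\max$ in the statement exactly encode the two cases $a+d \le b+c$ and $a+d \ge b+c$). In particular, when $a+d \le b+c$ the degree-one term is nonempty and the degree-zero term has its right endpoint at $a+d$, whereas in the opposite case the roles switch; I would check that in every case the kernel and cokernel intervals assemble consistently and that the map $\mathbf{k}[U_b] \grtensor N \to \mathbf{k}[U_a] \grtensor N$ is injective on the nose where claimed, so that no extra torsion contributes to higher homology. Finally, since the sheaf and cosheaf convolutions are additive and these formulas match the known $\grtensor$-computation in \cite{bubenik2019homological}, the proposition follows by appeal to \cref{theorem:convolution_of_cosheaves_is_graded_tensor}.
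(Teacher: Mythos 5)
Your proposal is correct and follows essentially the same route as the paper: reduce via \cref{theorem:convolution_of_cosheaves_is_graded_tensor} to the derived graded tensor product, then compute it with the flat resolution $0\to\mathbf{k}[U_b]\to\mathbf{k}[U_a]\to\mathbf{k}[a,b)\to 0$ of principal up-set modules, reading off $(M\bullet^L N)_0$ and $(M\bullet^L N)_1$ as the cokernel and kernel of $\mathbf{k}[b+c,b+d)\to\mathbf{k}[a+c,a+d)$. This is exactly the computation the paper outsources to \cite{bubenik2019homological} (and the dual of its own injective-resolution argument for the sheaf convolution), so no further comment is needed.
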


From now on, we will denote $\bullet$ by $\grtensor$ in the remainder of the paper, the exception being \cref{corollary:graded_module_adjunction}. Now let $P=\mathbb{R}^n$ and note that by \cref{prop:classification_into_flats_and_injectives} for $s\in \mathbb{R}^n$ the interval module $\cat{k}[U_s]$ is $\grtensor$-flat. Thus we can write $-\grtensor \cat{k}[U_s]$ instead of $-\grtensor^L\cat{k}[U_s]$ from now on in the $P=\mathbb{R}^n$ case. The following lemma tells us that this operation recovers the translation operation on complexes of persistence modules by $s$.

\begin{lemma}
\label{lemma:thickening_alexandrov_cosheaf}
Let $M$ be in $D(\cat{Vect}_{\cat{k}}^{\cat{R}^n})$. Then there is a canonical isomorphism $M\grtensor \mathbf{k}[U_{s}]\cong M(-s)$. In particular, for every down-set $D$ we have a canonical isomorphism $M(D(-s))\cong (M\grtensor \mathbf{k}[U_{s}])(D)$.
\end{lemma}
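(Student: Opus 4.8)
The plan is to reduce the statement to the underlying abelian category and then compute the relevant colimit directly. By \cref{prop:classification_into_flats_and_injectives} the principal up-set $U_s$ is closed under meets (if $a,b\ge s$ then $a\wedge b\ge s$), so $\mathbf{k}[U_s]$ is $\grtensor$-flat and $-\grtensor\mathbf{k}[U_s]=-\grtensor^L\mathbf{k}[U_s]$ is exact; the translation functor $(-)(-s)$ is exact as well, being a mere reindexing. Hence it suffices to produce a natural isomorphism of functors $M\grtensor\mathbf{k}[U_s]\cong M(-s)$ on $\mathbf{Vect}_{\mathbf{k}}^{\mathbb{R}^n}$, which then extends termwise to bounded complexes and descends to the derived category. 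So from now on I would take $M$ to be a single persistence module.

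Next I would unwind the stalk. By \cref{def:graded_module_tensor_product}, $(M\grtensor\mathbf{k}[U_s])_x=\colim_{a+b\le x}M_a\otimes_{\mathbf{k}}\mathbf{k}[U_s]_b$, and since $\mathbf{k}[U_s]_b=\mathbf{k}$ for $b\ge s$ and is $0$ otherwise, the diagram vanishes outside the subset $I:=\{(a,b)\,:\,a+b\le x,\ b\ge s\}$. Because $I$ is an up-set of the index poset $\{(a,b)\,:\,a+b\le x\}$ (any pair above an element of $I$ still has second coordinate $\ge s$), no nonzero object of the diagram maps to a zero object, so the vanishing terms impose no relations and the colimit restricts to $(M\grtensor\mathbf{k}[U_s])_x\cong\colim_{(a,b)\in I}M_a$, the transition map attached to $(a,b)\le(a',b')$ being $M_{a\le a'}$.

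The core of the argument is to identify this colimit with $M_{x-s}$. For every $(a,b)\in I$ one has $a\le x-b\le x-s$, so the maps $M_{a\le x-s}\colon M_a\to M_{x-s}$ form a cocone and induce $\phi\colon\colim_{I}M_a\to M_{x-s}$. Conversely $(x-s,s)\in I$, and I claim its structure map $\iota_{(x-s,s)}\colon M_{x-s}\to\colim_I M_a$ is a two-sided inverse of $\phi$. One composite is immediate: $\phi\circ\iota_{(x-s,s)}=M_{x-s\le x-s}=\mathrm{id}$. For the other I would use the zigzag $(a,b)\ge(a,s)\le(x-s,s)$ inside $I$, noting that $(a,s)\in I$ since $a+s\le a+b\le x$. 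The first morphism acts on $M$ by the identity $M_{a\le a}$ and the second by $M_{a\le x-s}$, so the cocone relations give $\iota_{(a,b)}=\iota_{(a,s)}=\iota_{(x-s,s)}\circ M_{a\le x-s}=\iota_{(x-s,s)}\circ\phi\circ\iota_{(a,b)}$; as this holds for every structure map, $\iota_{(x-s,s)}\circ\phi=\mathrm{id}$ by the universal property. This zigzag step, which substitutes for a (false) cofinality of the slice $\{(a,s)\}$ in $I$, is the one delicate point I expect; everything else is routine. The resulting isomorphism $(M\grtensor\mathbf{k}[U_s])_x\cong M_{x-s}=M(-s)_x$ is assembled from structure maps of $M$, hence is natural in $x$ (compatible with the nested index posets $I_x\subseteq I_{x'}$) and in $M$, yielding the asserted canonical isomorphism $M\grtensor\mathbf{k}[U_s]\cong M(-s)$.

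Finally, the \emph{in particular} clause is formal. For a down-set $D$ the identity $M(a)(D)=M(D(a))$ recorded earlier gives $M(-s)(D)=M(D(-s))$, and since the cosheaf evaluation $(-)(D)=\colim_{x\in D}(-)$ is a functor, applying it to the isomorphism just established yields $(M\grtensor\mathbf{k}[U_s])(D)\cong M(-s)(D)=M(D(-s))$, as claimed.
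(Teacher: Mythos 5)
Your proof is correct and follows essentially the same route as the paper: restrict the defining colimit $\colim_{a+b\le x}M_a\otimes_{\mathbf{k}}\mathbf{k}[U_s]_b$ to the indices with $b\ge s$ and identify the result with $M_{x-s}$. You simply make explicit two points the paper leaves implicit --- the justification for discarding the vanishing terms and the zigzag showing $\iota_{(x-s,s)}$ inverts the induced map $\phi$ --- as well as the passage to the derived category via $\grtensor$-flatness of $\mathbf{k}[U_s]$.
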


\begin{proof}
Suppose that $M$ is a persistence module. We have the following natural isomorphisms: 
$(M\grtensor\mathbf{k}[U_s])_x\cong \colim_{a+b\le x}(M_a\otimes_{\mathbf{k}}\mathbf{k}[U_s]_b)$. Note that if $s\not\le b$, then $\mathbf{k}[U_s]_b=0$. Thus, to evaluate the above colimit, we only need consider the case $s\le b$. Then $a+s\le a+b$. Furthermore, assuming $a+b\le x$, we get $a+s\le x$. Thus, we have $(M\grtensor \mathbf{k}[U_s])_x\cong \colim_{a+s\le x}(M_a\otimes_{\mathbf{k}}\mathbf{k})\cong \colim_{a\le x-s}M_a\cong M_{x-s}$. Thus $M\grtensor\mathbf{k}[U_{s}]\cong M(-s)$.
\end{proof}

\begin{proposition}
\label{prop:properties_of_convolving_object_cosheaf}
Let $\epsilon,\delta\in \mathbb{R}$ and let $M$ be in $D^b(\cat{Vect}_{\cat{k}}^{\cat{R}^n})$.
\begin{itemize}
\item[1.] There are natural isomorphisms $(M\grtensor \mathbf{k}[U_{\bm{\epsilon}}])\grtensor \mathbf{k}[U_{\bm{\delta}}]\cong M\grtensor \mathbf{k}[U_{\bm{\epsilon+\delta}}]$ and $M\grtensor\mathbf{k}[U_0]\cong M$ in $D^b(\cat{Vect}_{\cat{k}}^{\cat{R}^n})$.
\item[2.] If $\delta\ge  \epsilon$, there is a canonical morphism $\mathbf{k}[U_{\bm{\delta}}]\to \mathbf{k}[U_{\bm{\epsilon}}]$ in $D^b(\cat{Vect}_{\cat{k}}^{\cat{R}^n})$ inducing a canonical morphism $M\grtensor \mathbf{k}[U_{\bm{\delta}}]\to M\grtensor \mathbf{k}[U_{\bm{\epsilon}}]$. 
\item[3.] The canonical morphism $M\grtensor \mathbf{k}[U_{\bm{\delta}}]\to M\grtensor \mathbf{k}[U_{\bm{\epsilon}}]$ induces an isomorphism $\mathbb{L}(\mathbb{R}^n;M\grtensor \mathbf{k}[U_{\bm{\delta}}])\to \mathbb{L}(\mathbb{R}^n;M\grtensor \mathbf{k}[U_{\bm{\epsilon}}])$ and hence an isomorphism  in homology.
\end{itemize}
\end{proposition}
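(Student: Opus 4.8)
The plan is to deduce all three parts from the translation description of \cref{lemma:thickening_alexandrov_cosheaf}, namely $M\grtensor\mathbf{k}[U_s]\cong M(-s)$, which turns convolution with a principal up-set module into a shift. Throughout I use that $\mathbf{k}[U_s]$ is $\grtensor$-flat (\cref{prop:classification_into_flats_and_injectives}), so no deriving is needed, and that the shift functor $M\mapsto M(-s)$ is exact and hence passes to $D^b$. Part 1 is then just composition of shifts: by \cref{lemma:thickening_alexandrov_cosheaf}, $M\grtensor\mathbf{k}[U_{\bm{\epsilon}}]\cong M(-\bm{\epsilon})$, and applying the lemma once more gives $(M\grtensor\mathbf{k}[U_{\bm{\epsilon}}])\grtensor\mathbf{k}[U_{\bm{\delta}}]\cong M(-\bm{\epsilon})(-\bm{\delta})$. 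Since $M(-\bm{\epsilon})(-\bm{\delta})_x=M_{x-\bm{\delta}-\bm{\epsilon}}=M(-(\bm{\epsilon}+\bm{\delta}))_x$ and $\bm{\epsilon}+\bm{\delta}=\bm{\epsilon+\delta}$, this equals $M(-\bm{\epsilon+\delta})\cong M\grtensor\mathbf{k}[U_{\bm{\epsilon+\delta}}]$, giving the first isomorphism; the identity $M\grtensor\mathbf{k}[U_0]\cong M(-0)=M$ is the lemma with $s=0$.

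For part 2, since $\delta\ge\epsilon$ we have $\bm{\delta}\ge\bm{\epsilon}$, hence $U_{\bm{\delta}}\subseteq U_{\bm{\epsilon}}$. I would construct the canonical morphism $\mathbf{k}[U_{\bm{\delta}}]\to\mathbf{k}[U_{\bm{\epsilon}}]$ whose component at $x$ is the identity when $x\in U_{\bm{\delta}}$ and is zero otherwise; a short check of the naturality squares (the only nontrivial case being $x\le y$ with $x\in U_{\bm{\epsilon}}\setminus U_{\bm{\delta}}$ and $y\in U_{\bm{\delta}}$, where both composites vanish because the source $\mathbf{k}[U_{\bm{\delta}}]_x$ is $0$) shows this is a morphism of persistence modules, hence a morphism in $D^b$. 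Applying the functor $M\grtensor-$ yields the asserted morphism $M\grtensor\mathbf{k}[U_{\bm{\delta}}]\to M\grtensor\mathbf{k}[U_{\bm{\epsilon}}]$. Under the identifications of \cref{lemma:thickening_alexandrov_cosheaf} I would then verify that this coincides with the natural transformation $\phi\colon M(-\bm{\delta})\to M(-\bm{\epsilon})$ whose component at $x$ is the internal structure map $M_{x-\bm{\delta}}\to M_{x-\bm{\epsilon}}$ of $M$ (well defined since $-\bm{\delta}\le-\bm{\epsilon}$); this concrete description is what I feed into part 3.

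For part 3, the key observation is that $\mathbb{R}^n$ with the product order is a directed poset, so $L(\mathbb{R}^n;-)=\colim_{\mathbb{R}^n}$ is a filtered colimit and therefore exact on $\mathbf{Vect}_{\mathbf{k}}^{\mathbf{R}^n}$; consequently $\mathbb{L}(\mathbb{R}^n;-)$ has no higher terms and is computed by applying $L(\mathbb{R}^n;-)$ degreewise. It then suffices to show $L(\mathbb{R}^n;\phi)$ is an isomorphism in each degree. For a single module $M$ the translation $x\mapsto x-\bm{\delta}$ is an order isomorphism of $\mathbb{R}^n$, so the reindexings identify $\colim_x M_{x-\bm{\delta}}\cong\colim_y M_y\cong\colim_x M_{x-\bm{\epsilon}}$; tracing a class $[m]$ of $m\in M_y$ through these identifications shows $\phi$ is carried to the identity of $\colim_y M_y$, since in the colimit $[m]$ agrees with the class of its image under any structure map $M_{y\le y'}$, and $\phi$ is exactly such a map (with $y'=y+\bm{\delta}-\bm{\epsilon}\ge y$). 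Hence $L(\mathbb{R}^n;\phi)$, and therefore $\mathbb{L}(\mathbb{R}^n;\phi)$, is an isomorphism, in particular on homology. The main obstacle is precisely this last step: one must recognize that global cosections over $\mathbb{R}^n$ form a filtered, hence exact, colimit so that the derived functor collapses, and then that shifting the index by a constant vector is a cofinal reindexing under which the canonical structure-map morphism becomes the identity of $\colim M$. Parts 1 and 2 are by comparison formal consequences of \cref{lemma:thickening_alexandrov_cosheaf} and the functoriality of $\grtensor$.
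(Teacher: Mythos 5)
Your proof is correct and follows essentially the same route as the paper: all three parts are reduced to the shift description $M\grtensor\mathbf{k}[U_s]\cong M(-s)$ of \cref{lemma:thickening_alexandrov_cosheaf}, the canonical morphism in part 2 is identified with the structure maps $M_{x-\bm{\delta}}\to M_{x-\bm{\epsilon}}$, and part 3 comes down to the observation that translation is a cofinal reindexing of the colimit over $\mathbb{R}^n$ under which these structure maps become the identity. The only divergence is minor: in part 3 the paper passes to a projective resolution $P$ and computes $L(\mathbb{R}^n;P(-a))=P(\mathbb{R}^n)$, whereas you shortcut this by noting that $L(\mathbb{R}^n;-)$ is a filtered colimit of vector spaces and hence already exact, so no resolution is needed.
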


\begin{proof}
\begin{itemize}
\item[1.] This rest follows immediately by \cref{lemma:thickening_alexandrov_cosheaf}.
\item[2.] Note that for $\delta\ge \epsilon$, $\mathbf{k}[U_{\bm{\delta}}]$ is a submodule of $\mathbf{k}[U_{\bm{\epsilon}}]$ and the inclusion map is the canonical morphism. By \cref{lemma:thickening_alexandrov_cosheaf}, we have that $M\grtensor\cat{k}[U_{\bm{\delta}}]\cong M(-\bm{\delta})$ and $M\grtensor\cat{k}[U_{\bm{\epsilon}}]\cong M(-\bm{\epsilon})$. Furthermore, the morphism induced is canonically isomorphic to the chain map $M(-\bm{\delta})\to M(-\bm{\epsilon})$ whose $n$-degree component is the natural transformation with components $M^n_{x\le x+\bm{\delta}-\bm{\epsilon}}$ for $x\in \mathbb{R}^n$. The morphism in $D^b(\cat{Vect}_{\cat{k}}^{\cat{R}^n})$ is the localization of this chain map.
\item[3.] By \cref{lemma:thickening_alexandrov_cosheaf}, we have that $M\grtensor\cat{k}[U_{a}]\cong M(-a)$ for all $a\in \mathbb{R}^n$. Thus, if $P\to M$ is a projective resolution of $M$, $P(-\bm{\delta})$ and $P(-\bm{\epsilon})$ are projective resolutions of $M(-\bm{\delta})$ and $M(-\bm{\epsilon})$ respectively since $-\grtensor \mathbf{k}[U_a]$ is an exact functor for all $a\in \mathbb{R}^n$ by \cref{prop:classification_into_flats_and_injectives}. By definition, for any $a\in \mathbb{R}^n$ we have $\mathbb{L}(\mathbb{R}^n;M\grtensor\cat{k}[U_a])\cong \mathbb{L}(\mathbb{R}^n;M(-a)):=L(\mathbb{R}^n;P(-a))$. On the other hand $L(\mathbb{R}^n;P(-a))$ is the complex of $\mathbf{k}$-vector spaces $P(-a)(\mathbb{R}^n):=\colim_{x\in \mathbb{R}^n}P(-a)_x=\colim_{x\in \mathbb{R}^n}P_{x-a}=\colim_{x\in \mathbb{R}^n}P_x=P(\mathbb{R}^n)$. Furthermore, the maps from part 2 do induce isomorphisms as we would be computing the colimits $\colim_{x\in \mathbb{R}^n}P^n_{x\le x+\bm{\delta}-\bm{\epsilon}}$ and these would yield the identity chain morphism $P(\mathbb{R}^n)\to P(\mathbb{R}^n)$.
\end{itemize}
\end{proof}

\subsection{Sheaf convolution}
Here we restrict ourselves to the sheaf point of view of persistence modules as in \cref{def:persistence_modules_as_sheaves} and we define the sheaf convolution of bounded derived complexes of persistence modules.

\begin{definition}
\label{def:convolution_of_sheaves_poset_topology}
Let $M$ and $N$ be persistence modules, define $M* N:=s_{*}(M\boxtimes N)$. If $M$ and $N$ are in $D^b(\cat{Vect}_{\cat{k}}^{\cat{P}})$,  we define the \emph{sheaf convolution} to be the right derived functor 
\[M*^R N:= R s_{*}(M\boxtimes N).\]
\end{definition}

The following proposition gives us a formula for attempting to compute the sheaf convolution. We will see that this formula is useful for computing examples in the case $P=\mathbb{R}$ for persistence modules that decompose as a direct sum of interval modules.

\begin{proposition}
\label{prop:convolution_of_sheaves_poset_topology}
For all $x\in P$, there is a natural isomorphism $(M*N)_x=\lim_{a+b\ge x}(M_a\otimes N_b)$.
\end{proposition}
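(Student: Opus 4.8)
The plan is to run the exact formal dual of the computation in the proof of \cref{theorem:convolution_of_cosheaves_is_graded_tensor}, trading down-sets, costalks and colimits for up-sets, stalks and limits. The crux is the sheaf-theoretic description of persistence modules (\cref{def:persistence_modules_as_sheaves}): by \cref{theorem:functors_on_posets_are_sheaves_and_cosheaves}, a persistence module viewed as a sheaf $F$ on $P_{\le}$ recovers its value at $x$ as its sections over the principal up-set, $F_x = F(U_x)$ with $U_x = \{y : x \le y\}$, which is open in $P_{\le}$.

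First I would unwind the stalk using the direct image along the addition map. Since $s : P_{\le}\times P_{\le}\to P_{\le}$ is continuous, \cref{def:direct_and_inverse_image_sheaf} gives
\[(M*N)_x = s_*(M\boxtimes N)(U_x) := (M\boxtimes N)(s^{-1}(U_x)).\]
Next I would identify $s^{-1}(U_x) = \{(a,b) : a+b\ge x\}$, which is an up-set for the product order on $P\times P$ (if $a+b\ge x$ and $(a,b)\le (a',b')$ then $a'+b'\ge a+b\ge x$ by compatibility of $+$ with $\le$), hence open in the topology for which $M\boxtimes N$ is a sheaf. Applying the section-as-limit formula for sheaves on a preorder (\cref{def:sections_and_cosections}, equivalently the right Kan extension description in \cref{theorem:functors_on_posets_are_sheaves_and_cosheaves}) and then the stalk of the external tensor product, $(M\boxtimes N)_{(a,b)} = (\pi_1^{-1}M\shtensor\pi_2^{-1}N)_{(a,b)}\cong M_a\otimes_{\mathbf{k}}N_b$ from \cref{def:external_product_on_prosets} and \cref{example:inverse_image_cosheaf_on_a_poset}, I obtain
\[(M*N)_x \cong \lim_{a+b\ge x}(M\boxtimes N)_{(a,b)} \cong \lim_{a+b\ge x}(M_a\otimes_{\mathbf{k}}N_b).\]
Naturality in $x$ is then automatic: for $x\le x'$ the inclusion $U_{x'}\subseteq U_x$ induces the restriction map of the sheaf on the left and the canonical map between limits (over the smaller index set $\{a+b\ge x'\}$) on the right, and these agree by construction; the same bookkeeping gives functoriality in $M$ and $N$.

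I do not expect a genuine obstacle, as every ingredient is already in place in the excerpt and the argument is the mirror image of the cosheaf case. The only points that deserve a moment's care are that $s^{-1}(U_x)$ is actually open in the product-order Alexandrov topology (handled by the up-set check above, which is where the compatibility of the group operation with the order is used) and that the limit formula for sections applies to this particular open set; both are immediate once one notes that the opens of $P_{\le}$ are precisely the up-sets and that $M\boxtimes N$ is a genuine sheaf for the product order, so \cref{theorem:functors_on_posets_are_sheaves_and_cosheaves} delivers its sections as a limit of stalks.
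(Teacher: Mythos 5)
Your proposal is correct and follows essentially the same chain of isomorphisms as the paper's proof: unwind the stalk as sections over $U_x$, pull back along $s$, apply the limit-of-stalks description of sections over the up-set $s^{-1}(U_x)$, and identify the stalks of $M\boxtimes N$ with $M_a\otimes_{\mathbf{k}}N_b$. The extra checks you flag (that $s^{-1}(U_x)$ is an up-set via compatibility of $+$ with $\le$, and naturality in $x$) are left implicit in the paper but are correct and worth making explicit.
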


\begin{proof}
We have the following natural isomorphisms
\begin{gather*}
(M*N)_x:=(s_*(M\boxtimes N))_x\cong s_*(M\boxtimes N)(U_x):=(M\boxtimes N)(s^{-1}(U_x)):=\lim_{a+b\ge x}(M\boxtimes N)_{(a,b)}\\
:=\lim\limits_{a+b\ge x}(\pi_1^{-1}M\shtensor \pi_2^{-1}N)_{(a,b)}\cong \lim\limits_{a+b\ge x}(M_{\pi_1(a,b)}\otimes_{\cat{k}} N_{\pi_2(a,b)})=\lim\limits_{a+b\ge x}(M_a\otimes_{\cat{k}} N_b)
\end{gather*}
\end{proof}

\begin{proposition}
\label{prop:convolution_is_symmetric}
For every pair of persistence modules $M$ and $N$ there is a natural isomorphism $M*N\cong N*M$.
\end{proposition}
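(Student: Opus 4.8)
The plan is to exploit the commutativity of the group operation $+$ and the symmetry of the tensor product $\otimes_{\mathbf{k}}$, encoded geometrically by the coordinate-swap map. Let $\sigma : P\times P \to P\times P$ be the involution $\sigma(x,y) := (y,x)$. Since the product order on $P\times P$ is symmetric under interchanging the two factors, $\sigma$ is an order isomorphism, hence a homeomorphism for the product Alexandrov topology, and it is its own inverse. The two identities that drive the argument are $s\circ \sigma = s$, which holds because $x+y=y+x$, together with $\pi_1\circ\sigma = \pi_2$ and $\pi_2\circ\sigma=\pi_1$.

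First I would establish that $\sigma^{-1}(M\boxtimes N)\cong N\boxtimes M$. Unwinding $\boxtimes$, we have $\sigma^{-1}(M\boxtimes N) = \sigma^{-1}(\pi_1^{-1}M\shtensor\pi_2^{-1}N)$. On a preordered set with the Alexandrov topology the sheaf tensor product is computed stalkwise, $(F\shtensor G)_x = F_x\otimes_{\mathbf{k}} G_x$, and inverse image is computed by $f^{-1}F_x\cong F_{f(x)}$ (\cref{example:inverse_image_cosheaf_on_a_poset}); comparing stalks at $(x,y)$ gives $\sigma^{-1}(\pi_1^{-1}M\shtensor\pi_2^{-1}N)\cong (\pi_1\circ\sigma)^{-1}M \shtensor (\pi_2\circ\sigma)^{-1}N = \pi_2^{-1}M\shtensor\pi_1^{-1}N$. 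Applying the symmetry of $\shtensor$, which is again immediate from the stalkwise formula and the braiding of $\otimes_{\mathbf{k}}$, yields $\pi_1^{-1}N\shtensor\pi_2^{-1}M = N\boxtimes M$.

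Next I would push this through the direct image. Because $\sigma$ is a homeomorphism equal to its own inverse, direct image along $\sigma$ agrees with inverse image along $\sigma$, so $N\boxtimes M\cong \sigma^{-1}(M\boxtimes N)\cong \sigma_*(M\boxtimes N)$. Applying $s_*$ and using $s\circ\sigma = s$ together with functoriality of direct image gives
\[N*M = s_*(N\boxtimes M)\cong s_*\sigma_*(M\boxtimes N) = (s\circ\sigma)_*(M\boxtimes N) = s_*(M\boxtimes N) = M*N,\]
which is the desired isomorphism.

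The argument is essentially formal, so I do not expect a serious obstacle; the only points needing care are bookkeeping ones. The main thing to verify is that inverse image commutes with $\shtensor$ and that $\shtensor$ is symmetric, both of which reduce in the Alexandrov setting to the pointwise identities above rather than to any sheafification argument, and that the resulting isomorphism is natural in $M$ and $N$, which follows because every step is functorial. As a concrete sanity check, one may instead argue directly from \cref{prop:convolution_of_sheaves_poset_topology}: the reindexing $(a,b)\mapsto(b,a)$ is an order isomorphism of the index set $\{(a,b) : a+b\ge x\}$, and combined with the braiding $N_b\otimes_{\mathbf{k}} M_a\cong M_a\otimes_{\mathbf{k}} N_b$ it identifies $(N*M)_x=\lim_{a+b\ge x}(N_a\otimes_{\mathbf{k}} M_b)$ with $(M*N)_x=\lim_{a+b\ge x}(M_a\otimes_{\mathbf{k}} N_b)$ compatibly with the limit projections, giving naturality in $x$.
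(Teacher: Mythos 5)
Your proposal is correct. Your primary argument is a genuinely different (more structural) route than the paper's: you work at the level of the defining functors, introducing the swap homeomorphism $\sigma(x,y)=(y,x)$ and deducing $N\boxtimes M\cong\sigma^{-1}(M\boxtimes N)$ from the identities $\pi_i\circ\sigma=\pi_{3-i}$ together with the braiding of $\otimes_{\mathbf{k}}$, then pushing forward along $s$ using $s\circ\sigma=s$. The paper instead proves the statement in one line directly from the stalkwise formula of \cref{prop:convolution_of_sheaves_poset_topology}: $(M*N)_x\cong\lim_{a+b\ge x}(M_a\otimes_{\mathbf{k}}N_b)\cong\lim_{a+b\ge x}(N_b\otimes_{\mathbf{k}}M_a)\cong(N*M)_x$ --- which is precisely the ``sanity check'' you append at the end, so your proposal in fact contains the paper's proof as a special case. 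What your main route buys is that it isolates where commutativity of $+$ enters ($s\circ\sigma=s$) and where symmetry of $\otimes_{\mathbf{k}}$ enters, and, being an isomorphism of the underlying functors $s_*(M\boxtimes N)\cong s_*(N\boxtimes M)$ built from exact swaps, it transports without further work to the derived sheaf convolution $*^R$ and, mutatis mutandis, to the cosheaf convolution $\bullet^L$; the paper's computation is shorter but tied to the underived stalkwise description. The only bookkeeping point worth making explicit in your version is that for the homeomorphism $\sigma$ one has $\sigma_*=(\sigma^{-1})^{-1}=\sigma^{-1}$, which you do note; everything else is as formal as you claim.
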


\begin{proof}
Let $x\in P$. Then $(M*N)_x\cong \lim\limits_{a+b\ge x}(M_a\otimes N_b)\cong \lim\limits_{a+b\ge x}N_b\otimes_{\cat{k}} M_a\cong (N*M)_x$.
\end{proof}

Now let $P=\mathbb{R}^n$ and note that by \cref{prop:classification_into_flats_and_injectives} for $s\in \mathbb{R}^n$ the interval module $\cat{k}[D_s]$ is injective. Thus we can write $-* \cat{k}[U_s]$ instead of $-*^R\cat{k}[U_s]$ from now on in the $P=\mathbb{R}$ case. The following lemma tells us that this operation recovers the translation operation on complexes of persistence modules by $s$.

\begin{lemma}
\label{lemma:thickening_alexandrov_sheaf}
Let $M$ be in $D(\cat{Vect}_{\cat{k}}^{\cat{R}^n})$. Then there is a canonical isomorphism $M* \cat{k}[D_{s}]=M(-s)$. In particular, for every up-set $U$ we have a canonical isomorphism $M(U(-s))\cong (M* \cat{k}[D_{s}])(U)$. 
\end{lemma}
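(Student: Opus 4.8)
The plan is to mirror the proof of the cosheaf analogue \cref{lemma:thickening_alexandrov_cosheaf}, replacing the colimit description of $\bullet$ with the limit description of $*$ furnished by \cref{prop:convolution_of_sheaves_poset_topology}. Since $\mathbf{k}[D_s]$ is injective (\cref{prop:classification_into_flats_and_injectives}), the convolution $-*\mathbf{k}[D_s]$ coincides with the derived convolution $-*^R\mathbf{k}[D_s]$, as noted just before the lemma, and is computed degreewise on complexes; hence it suffices to prove the claim for a single persistence module $M$ and to observe that the resulting isomorphisms are natural in each degree. For such an $M$, \cref{prop:convolution_of_sheaves_poset_topology} gives $(M*\mathbf{k}[D_s])_x\cong\lim_{a+b\ge x}(M_a\otimes_{\mathbf{k}}\mathbf{k}[D_s]_b)$, and the task is to evaluate this limit.

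The key steps are as follows. Because $\mathbf{k}[D_s]_b=\mathbf{k}$ when $b\le s$ and $\mathbf{k}[D_s]_b=0$ otherwise, every term of the diagram is either $M_a$ (when $b\le s$) or $0$ (when $b\not\le s$). The crucial point, and the one place where the argument genuinely departs from the cosheaf case, is that a limit is in general sensitive to its vanishing terms, so I must show the limit is computed by the subdiagram $J=\{(a,b)\mid a+b\ge x,\ b\le s\}$ of nonzero terms. This holds because no zero term maps to a nonzero term: any morphism $(a,b)\le(a',b')$ has $b\le b'$, so $b\not\le s$ forces $b'\not\le s$. Thus the vanishing terms force only their own components to be $0$ and impose no relation among the remaining components, so restriction to $J$ is a bijection on limits. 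I would then note that $b\le s$ together with $a+b\ge x$ gives $a+s\ge x$, i.e. $a\in U_{x-s}$, and that the projection $J\to U_{x-s}$, $(a,b)\mapsto a$, is initial: for each $a_0\ge x-s$ the relevant comma category is nonempty (it contains $(a_0,s)$) and connected, since any $(a,b)$ in it is linked to $(a_0,s)$ through $(a,s)$, which lies in $J$ because $a+s\ge a+b\ge x$. Therefore $(M*\mathbf{k}[D_s])_x\cong\lim_{a\ge x-s}M_a=M_{x-s}$, the last equality holding because $x-s=\min U_{x-s}$, and $M_{x-s}=M(-s)_x$.

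These stalk isomorphisms are natural in $x$, so they assemble into an isomorphism of persistence modules $M*\mathbf{k}[D_s]\cong M(-s)$; the ``in particular'' clause is then immediate from the identity $M(a)(U)=M(U(a))$ recorded earlier, giving $(M*\mathbf{k}[D_s])(U)=M(-s)(U)=M(U(-s))$ for every up-set $U$. (Alternatively one can run the same computation with $s^{-1}(U)=\{(a,b)\mid a+b\in U\}$ in place of $s^{-1}(U_x)$, reducing to the initial projection $J\to U-s$, $(a,b)\mapsto a$.) I expect the main obstacle to be exactly the reduction of the limit to its nonzero terms: in the cosheaf proof the vanishing summands simply drop out of the colimit, whereas for a limit I must check carefully that the zero objects neither force extra components nor impose spurious relations, and that the surviving subdiagram $J$ is initial rather than merely a subposet. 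The down-set orientation of $D_s$ against the up-set indexing of the limit is what makes this bookkeeping the delicate part; everything else is a formal unwinding of \cref{prop:convolution_of_sheaves_poset_topology} paralleling \cref{lemma:thickening_alexandrov_cosheaf}.
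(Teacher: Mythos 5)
Your proof is correct and follows essentially the same route as the paper, which simply declares the result ``dual to the proof of \cref{lemma:thickening_alexandrov_cosheaf}'' — i.e.\ restrict the (co)limit to the terms where $\mathbf{k}[D_s]_b\neq 0$ and identify the result with $M_{x-s}$. Your added care (checking that no zero term maps into a nonzero one, and that the projection $J\to U_{x-s}$ is initial) is exactly the bookkeeping the paper's one-line appeal to duality leaves implicit, and it is carried out correctly.
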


\begin{proof}
The proof is dual to the proof of \cref{lemma:thickening_alexandrov_cosheaf}.
\end{proof}

%


\begin{proposition}
\label{prop:properties_of_convolving_object_sheaf}
Let $\epsilon,\delta\in \mathbb{R}$ and let $M$ be in $D^b(\cat{Vect}_{\cat{k}}^{\cat{R}^n})$.
\begin{itemize}
\item[1.] There are natural isomorphisms $(M* \mathbf{k}[D_{\bm{\epsilon}}])* \mathbf{k}[D_{\bm{\delta}}]\cong M* \mathbf{k}[D_{\bm{\epsilon+\delta}}]$ and $M* \mathbf{k}[D_0]\cong M$ in $D^b(\cat{Vect}_{\cat{k}}^{\cat{R}^n})$.
\item[2.] If $\delta\ge  \epsilon$, there is a canonical morphism $\mathbf{k}[D_{\bm{\delta}}]\to \mathbf{k}[D_{\bm{\epsilon}}]$ in $D^b(\cat{Vect}_{\cat{k}}^{\cat{R}^n})$ inducing a canonical morphism $M* \mathbf{k}[D_{\bm{\delta}}]\to M* \mathbf{k}[D_{\bm{\epsilon}}]$. 
\item[3.] The canonical morphism $M* \mathbf{k}[D_{\bm{\delta}}]\to M* \mathbf{k}[D_{\bm{\epsilon}}]$ induces an isomorphism $R\Gamma(\mathbb{R}^n,M* \mathbf{k}[D_{\bm{\delta}})\to R\Gamma(\mathbb{R}^n,M* \mathbf{k}[D_{\bm{\epsilon}}])$ and hence an isomorphism  in cohomology. 
\end{itemize}
\end{proposition}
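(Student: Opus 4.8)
The plan is to mirror the argument given for the cosheaf case in \cref{prop:properties_of_convolving_object_cosheaf}, with \cref{lemma:thickening_alexandrov_sheaf} now playing the role that \cref{lemma:thickening_alexandrov_cosheaf} played there. The central observation is that convolving with the injective interval module $\mathbf{k}[D_s]$ is the same as translating by $-s$, so each of the three parts reduces to an elementary fact about the translation functor on complexes. Throughout I would use that $\mathbf{k}[D_s]$ is injective by \cref{prop:classification_into_flats_and_injectives}, which is what licenses writing $-*\mathbf{k}[D_s]$ in place of $-*^R\mathbf{k}[D_s]$.

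For part 1, I would invoke \cref{lemma:thickening_alexandrov_sheaf} twice to obtain $(M*\mathbf{k}[D_{\bm{\epsilon}}])*\mathbf{k}[D_{\bm{\delta}}]\cong M(-\bm{\epsilon})(-\bm{\delta})$, and then read off $M(-\bm{\epsilon})(-\bm{\delta})=M(-(\bm{\epsilon}+\bm{\delta}))\cong M*\mathbf{k}[D_{\bm{\epsilon}+\bm{\delta}}]$; likewise $M*\mathbf{k}[D_0]\cong M(0)=M$. This is immediate once the bookkeeping of translations is recorded.

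For part 2, the first task is to produce the canonical morphism $\mathbf{k}[D_{\bm{\delta}}]\to \mathbf{k}[D_{\bm{\epsilon}}]$. Since $\delta\ge\epsilon$ forces $D_{\bm{\epsilon}}\subseteq D_{\bm{\delta}}$, this is the quotient map that is the identity on $D_{\bm{\epsilon}}$ and zero on $D_{\bm{\delta}}\setminus D_{\bm{\epsilon}}$, dual to the submodule inclusion used in the cosheaf case; I would verify it is a well-defined natural transformation by checking the relevant squares commute. Applying the exact functor $M*-$ together with \cref{lemma:thickening_alexandrov_sheaf} then identifies the induced map with the translation chain map $M(-\bm{\delta})\to M(-\bm{\epsilon})$ whose degree-$n$ part has components $M^n_{x\le x+\bm{\delta}-\bm{\epsilon}}$, and the morphism in $D^b(\cat{Vect}_{\cat{k}}^{\cat{R}^n})$ is its localization.

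The heart of the proposition, and the step I expect to require the most care, is part 3. Here I would fix an injective resolution $M\to I$, available by \cref{prop:enough_injectives_and_projectives}, and observe that, because the translation $(-a)$ is an exact autoequivalence of $\cat{Vect}_{\cat{k}}^{\cat{R}^n}$ and hence preserves injectives, $I(-a)$ is an injective resolution of $M(-a)\cong M*\mathbf{k}[D_a]$. Consequently $R\Gamma(\mathbb{R}^n;M*\mathbf{k}[D_a])$ is computed by $\Gamma(\mathbb{R}^n;I(-a))=I(-a)(\mathbb{R}^n)=\lim_{x\in\mathbb{R}^n}I_{x-a}=\lim_{x\in\mathbb{R}^n}I_x=I(\mathbb{R}^n)$, the key point being that a global section over the whole space is invariant under reindexing by a global translation. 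The remaining delicate check is that the canonical morphism of part 2 induces exactly this reindexing, so that on global sections it becomes the identity chain map $I(\mathbb{R}^n)\to I(\mathbb{R}^n)$; this yields the isomorphism $R\Gamma(\mathbb{R}^n,M*\mathbf{k}[D_{\bm{\delta}}])\to R\Gamma(\mathbb{R}^n,M*\mathbf{k}[D_{\bm{\epsilon}}])$ and hence the claimed isomorphism in cohomology. The main obstacle is thus bookkeeping rather than conceptual: confirming that the translation-indexed limit is genuinely invariant and that the map of part 2 acts as the identity after passing to sections over all of $\mathbb{R}^n$.
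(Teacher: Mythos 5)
Your proposal is correct and follows essentially the same route as the paper, whose proof of this proposition is literally ``use \cref{lemma:thickening_alexandrov_sheaf} and dualize the argument of \cref{prop:properties_of_convolving_object_cosheaf}'': you correctly identify the canonical map $\mathbf{k}[D_{\bm{\delta}}]\to\mathbf{k}[D_{\bm{\epsilon}}]$ as the quotient dual to the submodule inclusion in the cosheaf case, and your part 3 (injective resolution $I$, translation preserving injectives, $\lim_{x}I_{x-a}\cong\lim_{x}I_{x}$ with the induced map being the identity) is exactly the dual of the paper's projective/colimit computation.
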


\begin{proof}
Use \cref{lemma:thickening_alexandrov_sheaf} and dual arguments to the ones in the proof of \cref{prop:properties_of_convolving_object_cosheaf}.
\end{proof}

\begin{proposition}[\textbf{First part of Proposition 1.4}]
\label{prop:sheaf_convolution_of_interval_modules} Let $P=\mathbb{R}$ and consider two interval modules, $M=\mathbf{k}[a,b)$ and $N=\mathbf{k}[c,d)$. Then $(M*^RN)^0\cong M*N\cong \mathbf{k}[\max(a+d,b+c),b+d)$, $(M*^RN)^1\cong \mathbf{k}[a+c,\min(b+c,a+d))$ and $(M*^RN)^i=0$ for $i\ge 2$.
\end{proposition}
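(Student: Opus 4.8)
The plan is to compute the right derived functor $Rs_*$ by resolving $M\boxtimes N$ by injective modules on $\mathbb{R}\times\mathbb{R}$ for which $s_*$ is easy to evaluate, and then to read off the cohomology term by term. The key observation is that on the totally ordered line $\mathbb{R}$ every down-set is automatically closed under joins, so by \cref{prop:classification_into_flats_and_injectives} each down-set module $\mathbf{k}[(-\infty,\beta)]$ is injective. Hence each of our interval modules admits a short injective resolution; for $M=\mathbf{k}[a,b)$ I would use
\[0\to \mathbf{k}[a,b)\to \mathbf{k}[(-\infty,b)]\to \mathbf{k}[(-\infty,a)]\to 0,\]
and likewise $0\to N\to \mathbf{k}[(-\infty,d)]\to\mathbf{k}[(-\infty,c)]\to 0$. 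Call these resolutions $I^{\bullet}$ and $J^{\bullet}$.

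Next I would pass to the total complex of the double complex $I^{\bullet}\boxtimes J^{\bullet}$ on $\mathbb{R}^2$. Each term $\mathbf{k}[(-\infty,\beta)]\boxtimes\mathbf{k}[(-\infty,\delta)]=\mathbf{k}[(-\infty,\beta)\times(-\infty,\delta)]$ is injective, since the product is a down-set closed under joins in the lattice $\mathbb{R}^2$ (\cref{example:lattice}), again by \cref{prop:classification_into_flats_and_injectives}. Because $\boxtimes$ is exact in each variable (\cref{def:external_product_on_prosets}), the standard double-complex argument (filter by rows, then by columns) shows that this total complex is a genuine injective resolution of $M\boxtimes N$. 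Since injectives are $s_*$-acyclic, $M*^RN\cong Rs_*(M\boxtimes N)$ is therefore computed by applying $s_*$ to this total complex.

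The remaining input is the value of $s_*$ on the injective box terms. Using \cref{prop:convolution_of_sheaves_poset_topology},
\[\bigl(s_*(\mathbf{k}[(-\infty,\beta)]\boxtimes\mathbf{k}[(-\infty,\delta)])\bigr)_x=\lim_{a'+b'\ge x}\bigl(\mathbf{k}[(-\infty,\beta)]_{a'}\otimes\mathbf{k}[(-\infty,\delta)]_{b'}\bigr),\]
and because the box $(-\infty,\beta)\times(-\infty,\delta)$ is a down-set in $\mathbb{R}^2$, its intersection with the up-set $s^{-1}(U_x)$ is convex and meets it exactly when $x<\beta+\delta$; so this limit is $\mathbf{k}$ for $x<\beta+\delta$ and $0$ otherwise, giving $s_*(\mathbf{k}[(-\infty,\beta)]\boxtimes\mathbf{k}[(-\infty,\delta)])\cong\mathbf{k}[(-\infty,\beta+\delta)]$, with induced maps being the evident identity-on-overlap maps of nested down-set modules. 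Assembling $s_*(I^{\bullet}\boxtimes J^{\bullet})$, the complex computing $M*^RN$ becomes
\[\mathbf{k}[(-\infty,b+d)]\to \mathbf{k}[(-\infty,a+d)]\oplus\mathbf{k}[(-\infty,b+c)]\to \mathbf{k}[(-\infty,a+c)],\]
placed in cohomological degrees $0,1,2$, the differentials being assembled from these natural maps with the usual total-complex signs.

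Finally I would compute cohomology pointwise in $x\in\mathbb{R}$, which is legitimate since every term is an interval module with identity transition maps. At $x$ the stalk complex has $\mathbf{k}$ in degree $0$ iff $x<b+d$; in degree $1$ it is a sum of two copies of $\mathbf{k}$, the first present iff $x<a+d$ and the second iff $x<b+c$; in degree $2$ it is $\mathbf{k}$ iff $x<a+c$, and each nonzero differential is an identity. A short case analysis on the position of $x$ relative to the thresholds $a+c\le\{a+d,b+c\}\le b+d$ (with the two orderings of $a+d$ versus $b+c$ handled symmetrically) then yields $H^0_x=\mathbf{k}$ exactly on $[\max(a+d,b+c),b+d)$, $H^1_x=\mathbf{k}$ exactly on $[a+c,\min(a+d,b+c))$, and $H^i=0$ for $i\ge 2$; since $s_*$ is left exact, $H^0=(M*^RN)^0$ also equals the underived $M*N$. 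This reproduces the stated formulas, and one may note as a cross-check that the answer is precisely the degree-reversal of the cosheaf convolution computed in the companion part of \cref{prop:sheaf_convolution_of_interval_modules}. I expect the main obstacle to be the justification that the box product of the two short resolutions is again an injective resolution (the biexactness/double-complex step) together with careful bookkeeping of the transition maps in the case analysis; the individual limit computations and the final cohomology are then routine.
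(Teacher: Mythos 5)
Your proof is correct, and it reaches the stated formulas by a route that is organized differently from the paper's, even though both hinge on the same key input: the two-term resolution $0\to \mathbf{k}[a,b)\to \mathbf{k}(-\infty,b)\to \mathbf{k}(-\infty,a)\to 0$ by down-set modules, which are injective by \cref{prop:classification_into_flats_and_injectives}. The paper does not form the double complex at all: it first computes the underived $M*N$ (hence $(M*^RN)^0$, by left exactness of $s_*$) directly from the limit formula of \cref{prop:convolution_of_sheaves_poset_topology}, via a hands-on analysis of which antidiagonals $\{s+t=l\}$ meet the rectangle $[a,b)\times[c,d)$; it then obtains $(M*^RN)^1$ by resolving only the factor $M$ and applying the underived $-*\mathbf{k}[c,d)$ to the two injective terms, reusing the just-established $H^0$ formula for the down-set cases $\mathbf{k}(-\infty,\beta)*\mathbf{k}[c,d)\cong\mathbf{k}[\beta+c,\beta+d)$. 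You instead resolve both factors, verify that the box terms $\mathbf{k}[(-\infty,\beta)\times(-\infty,\delta)]$ are injective on the lattice $\mathbb{R}^2$ (again via \cref{prop:classification_into_flats_and_injectives} and \cref{example:lattice}), and read off all three cohomology groups from a single three-term complex of down-set modules by a pointwise rank count. What your version buys is uniformity: the only limit you ever evaluate is the easy one for a box of down-sets, and $H^0$, $H^1$, $H^{\ge 2}$ all drop out of the same case analysis, with the cosheaf computation as a visible degree-reversal cross-check. The cost is the extra justification that $\operatorname{Tot}(I^{\bullet}\boxtimes J^{\bullet})$ is an injective resolution of $M\boxtimes N$ (biexactness of $\boxtimes$ plus the standard filtration argument) — which you correctly flag and which is indeed routine — whereas the paper's one-sided resolution sidesteps this at the price of the fiddlier direct limit computation for $M*N$ itself. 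Both arguments are sound.
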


\begin{proof}
It follows by \cref{prop:convolution_of_sheaves_poset_topology} that $(M*N)_x$ is the $\mathbf{k}$-vector subspace of $\prod_{s+t=r}M_a\otimes_{\mathbf{k}}N_b$ that is the limit of the digram of vector spaces $M_p\otimes_{\mathbf{k}}N_q$ with $p+q\ge x$ and the maps in the diagram are given by $\mathbf{1}_{M_{p}}\otimes_{\mathbf{k}}N_{q\le q'}$ and $M_{p\le p'}\otimes_{\mathbf{k}}\mathbf{1}_{N_q}$ and their compositions. Suppose that $b+c\le a+d$ as in \cref{fig:2}. Note that $M_a\otimes_{\mathbf{k}}N_c\cong \mathbf{k}$. However, for any other $a'\neq a$, $c'\neq c$ such that $a'+c'=a+c$ we have $M_{a'}\otimes_{\mathbf{k}}N_{b'}=0$. Therefore,  we have $(M*N)_{a+c}=\lim_{s+t\ge a+c}M_s\otimes_{\mathbf{k}}N_t=0$. Same reasoning shows that along any antidiagonal $l<a+d$ we have $(M*N)_l=\lim_{s+t\ge l}M_s\otimes_{\mathbf{k}}N_t=0$. The antidiagonal $a+d$ is the first one where we have a nontrivial limit as all the relevant maps that contribute to the limit computation outside the rectangle in \cref{fig:2} are mapping into trivial vector spaces. Furthermore $\lim_{s+t\ge a+d}(M_s\otimes_{\mathbf{k}}N_t)\cong (M*N)_{a+d}\cong \mathbf{k}$. This is because all the vector spaces in the rectangle are copies of $\mathbf{k}$ and all the linear maps are identities and thus it is the one dimensional diagonal subspace of $\prod_{s+t=a+d}M_s\otimes_{\mathbf{k}}N_t$ that is the limit. Similar reasoning shows that for any antidiagonal $l<b+d$ we have $(M*N)_l\cong \mathbf{k}$. Finally, at $b+d$ we have $(M*N)_{b+d}=0$ as all the vector spaces relevant in the limit computation are trivial. Thus $M*N\cong \mathbf{k}[a+d,b+d)$. If we had instead assumed in the beginning that $a+d\le b+c$ by the same reasoning we would have gotten $M*N\cong \mathbf{k}[b+c,b+d)$. Thus, in general we have $M*N\cong \mathbf{k}[\max(a+d,b+c),b+d)$.

\begin{figure}[h]
\centering
\begin{tikzpicture}[line cap=round,line join=round,,x=1.0cm,y=1.0cm,scale=0.3]
\draw[->,color=black,dashed] (-3,0) -- (9,0);
\draw[->,color=black,dashed] (0,-0.5) -- (0,10.5);
\draw[color=red,line width=0.8mm] (2,0) -- (5,0);
\draw[color=red,line width=0.8mm] (0,3) -- (0,9);
\fill [color=blue] (2,0) circle (1.5mm);
\fill [color=blue] (5,0) circle (1.5mm);
\fill [color=blue] (0,3) circle (1.5mm);
\fill [color=blue] (0,9) circle (1.5mm);

\fill[fill=red!25] (5,3) rectangle (2,9);
\draw[color=black,dashed] (-2,7) -- (6,-1);
\draw[color=black,dashed] (-1,9) -- (7,1);
\draw[color=black,dashed] (0,11)--(8,3);
\draw[color=black,dashed] (0,14)--(9,5);
\draw[color=red, line width=0.5mm] (2,6)--(5,3);
\draw[color=red, line width=0.5mm] (2,9) -- (5,6);
\fill [color=red] (2,3) circle (1.5mm);
\fill [color=red] (5,9) circle (1.5mm);
\draw[color=black,line width=0.3mm] (2,4)--(3,3);
\draw[color=black,line width=0.3mm](2,8)--(5,5);
\draw[color=black,dashed] (5,5)--(7,3);
\draw[color=black] (2,-0.5) node[scale=0.8] {$a$};
\draw[color=black] (5,-0.5) node[scale=0.8] {$b$};
\draw[color=black] (0.5,3) node[scale=0.8] {$c$};
\draw[color=black] (0.5,9) node[scale=0.8] {$d$};
\draw[color=black] (-2,7.3) node[scale=0.8]{$a+c$};
\draw[color=black] (-1.5,9.5) node[scale=0.8] {$b+c$};
\draw[color=black] (0,11.5) node[scale=0.8] {$a+d$};
\end{tikzpicture}
\caption {\rmfamily Sheaf convolution of interval modules : $\mathbf{k}[a,b)* \mathbf{k}[c,d) =\mathbf{k}[\max(a+d,b+c)$,$b+d)$ }
\label{fig:2}
\end{figure} 

In order to calculate $(M*^RN)^1$ we note that by \cref{prop:classification_into_flats_and_injectives} we have the following augmented injective resolution of $M=\mathbf{k}[a,b)$:
\[0\to \mathbf{k}[a,b)\to \mathbf{k}(-\infty,b)\to \mathbf{k}(-\infty,a)\to 0
\]
Apply the functor $-* \mathbf{k}[c,d)$ to the injective resolution to get the following (no longer exact) sequence.
\[0\to \mathbf{k}[b+c,b+d)\to \mathbf{k}[a+c,a+d)\to  0
\]
Calculating cohomology in degree 1 we get the following:
\[((\mathbf{k}[a,b)*^R\mathbf{k}[c,d)))^1=\mathbf{k}[a+c,\min(b+c,a+d))\,.\]
Similarly
$(\mathbf{k}[a,b)*^R\mathbf{k}(-\infty,d))^1 =0$
and 
$(\mathbf{k}[a,\infty)*^R\mathbf{k}[c,d))^1 = \mathbf{k}[a+c,a+d)$. 
\end{proof}

Recall the definition of the sheaf hom $\scHom$. If $F$ and $G$ are sheaves on a space $X$, $\scHom(F,G)$ is the sheaf of abelian groups $\scHom(F,G)(U)=\Hom(F|_U,G|_U)$ for every open $U\subseteq X$, where $F|_U$ and $G|_U$ are the restrictions of $F$ and $G$ on $U$ respectively.

\begin{definition}
\label{def:graded_module_hom_as_a_sheaf}
Let $M,N:\cat{P}\to \cat{Vect}_{\cat{k}}$ be persistence modules. We can define $\scHom^*(M,N):=\pi_{2*}\scHom(\pi_1^{-1}M,s^{-1}N)$. If $M$ and $N$ are in $D^b(\cat{Vect}_{\cat{k}}^{\cat{P}})$, we also have the derived functor $R\scHom^*(M,N):=R\pi_{2*}\scHom(\pi_1^{-1}M,s^{-1}N)$.
\end{definition}

\begin{theorem}[\textbf{Theorem 1.3}]
\label{theorem:graded_module_hom_as_a_sheaf}
Let $M$ and $N$ be in $D^b(\cat{Vect}_{\cat{k}}^{\cat{P}})$. There is a canonical isomorphism $R\scHom^*(M,N)\cong R\uHom (M,N)$. 
\end{theorem}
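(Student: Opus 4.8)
My plan is to follow the template of the proof of \cref{theorem:convolution_of_cosheaves_is_graded_tensor}: first produce an underived natural isomorphism $\scHom^*(M,N)\cong \uHom(M,N)$ of persistence modules by computing stalks, and then deduce the derived statement from the uniqueness of derived functors. Throughout I view persistence modules as sheaves on the up-sets of $P$ via \cref{theorem:functors_on_posets_are_sheaves_and_cosheaves}, so that the value of a module at $x$ coincides with its stalk and with its value on the principal up-set $U_x$.

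First I would compute the stalk of $\scHom^*(M,N)=\pi_{2*}\scHom(\pi_1^{-1}M,s^{-1}N)$ at a point $x\in P$, for $M,N$ persistence modules. Since $\pi_{2*}$ is a direct image, its value at $x$ equals the value of $\scHom(\pi_1^{-1}M,s^{-1}N)$ on $\pi_2^{-1}(U_x)=P\times U_x$, and by the definition of the sheaf hom this is the $\mathbf{k}$-vector space of morphisms between the restricted sheaves on the sub-poset $P\times U_x$. By \cref{theorem:functors_on_posets_are_sheaves_and_cosheaves}, together with $(\pi_1^{-1}M)_{(a,b)}\cong M_a$ and $(s^{-1}N)_{(a,b)}\cong N_{a+b}$ from \cref{example:inverse_image_cosheaf_on_a_poset}, such a morphism is exactly a natural family of linear maps $\eta_{(a,b)}\colon M_a\to N_{a+b}$ indexed by the pairs with $x\le b$. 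The crucial point is that $x$ is the least element of $U_x$: naturality along $(a,x)\le (a,b)$ forces $\eta_{(a,b)}=N_{a+x\le a+b}\circ\eta_{(a,x)}$, so the family is determined by its restriction to the slice $b=x$, and every remaining naturality square reduces to the statement that $a\mapsto\eta_{(a,x)}$ is a morphism $M\to N(x)$. This identifies the stalk with $\Hom(M,N(x))=\uHom(M,N)_x$ (\cref{def:internal_hom}). I would then verify that for $x\le x'$ the restriction map to $P\times U_{x'}$ corresponds under this identification to postcomposition with the structure morphism $N(x)\to N(x')$, which is precisely the structure map of $\uHom(M,N)$; combined with functoriality in $M$ and $N$, this upgrades the pointwise isomorphism to a natural isomorphism of persistence modules $\scHom^*(M,N)\cong\uHom(M,N)$.

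To pass to the derived statement, I would observe that, for fixed $M$, both $\scHom^*(M,-)$ and $\uHom(M,-)$ are left exact functors of $N$ that have just been shown to be isomorphic, so by the uniqueness of derived functors their right derived functors are canonically isomorphic, provided one checks that the formula $R\pi_{2*}\scHom(\pi_1^{-1}M,s^{-1}(-))$ of \cref{def:graded_module_hom_as_a_sheaf} actually computes the total right derived functor of $\scHom^*(M,-)$. Granting this, evaluating on an injective resolution $N\to I^\bullet$ and applying the underived isomorphism degreewise gives $R\scHom^*(M,N)\cong\uHom(M,I^\bullet)=R\uHom(M,N)$; the extension from modules to bounded complexes is harmless because $\pi_1^{-1}$ and $s^{-1}$ are exact by \cref{theorem:direct_inverse_image_adjunction_cosheaf}.

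The main obstacle is exactly this derived compatibility. Since $s^{-1}$ is a left adjoint (\cref{theorem:direct_inverse_image_adjunction_cosheaf}) it preserves projectives but not injectives, so $s^{-1}$ applied to an injective resolution of $N$ need not be injective, nor even acyclic, on $P\times P$; one therefore cannot naively commute the resolution past $s^{-1}$ and $\scHom$, and indeed $\scHom(\pi_1^{-1}M,s^{-1}N)$ is in general not $\pi_{2*}$-acyclic, since $R\uHom(M,N)$ genuinely carries higher $\mathrm{Ext}$ terms. What must be established is that, for an injective persistence module $J$, the object $s^{-1}J$ is acyclic for $\scHom(\pi_1^{-1}M,-)$ and that $\scHom(\pi_1^{-1}M,s^{-1}J)$ is acyclic for $\pi_{2*}$, so that the injective resolution of $N$ does compute the total derived functor. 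I expect the cleanest route is to exploit the opposite variance and resolve $M$ projectively instead (available by \cref{prop:enough_injectives_and_projectives}): since $\pi_1^{-1}$ preserves projectives, each term of $\pi_1^{-1}P^\bullet$ is projective on $P\times P$ and $\scHom(\pi_1^{-1}P^j,-)$ is exact, which removes the $\scHom$-derivation and reduces the whole question to the vanishing of the higher derived sections $R^{>0}\Gamma(P\times U_x,-)$ over the Alexandrov product, where the presence of the least element $x\in U_x$ is what makes these sections collapse. Verifying this vanishing is the technical heart of the argument.
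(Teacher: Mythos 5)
Your proposal follows the same two-step strategy as the paper's proof: first compute the stalk $\scHom^*(M,N)_x$ as $\Hom(\pi_1^{-1}M|_{\pi_2^{-1}(U_x)},s^{-1}N|_{\pi_2^{-1}(U_x)})$ and identify it with $\uHom(M,N)_x$, then pass to the derived statement by the uniqueness of derived functors. Your stalk identification---using that $x$ is the least element of $U_x$ to reduce a natural family $\eta_{(a,b)}\colon M_a\to N_{a+b}$ to its slice at $b=x$, i.e.\ to a morphism $M\to N(x)$---is a restatement of the paper's computation, which instead rewrites the Hom of the restricted sheaves as $\lim_{d+c-a\ge x}\Homk(M_a,N_{c+d})$ and matches it against the limit formula for $\uHom(M,N)_x$ recalled in \cref{def:internal_hom}; the two identifications are equivalent. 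Where you diverge is the derived step: the paper asserts in one line that the derived statement follows from the uniqueness property of derived functors, whereas you correctly observe that \cref{def:graded_module_hom_as_a_sheaf} derives only the outer functor $\pi_{2*}$, so identifying that formula with the total right derived functor of the composite $N\mapsto\pi_{2*}\scHom(\pi_1^{-1}M,s^{-1}N)$ requires an acyclicity argument, for which you sketch a route (resolve $M$ projectively, reduce to vanishing of higher derived sections over $P\times U_x$) but do not carry it out. That concern is legitimate, and the paper does not address it either, so your attempt is no less complete than the published proof; just be aware that the vanishing you flag as the technical heart is genuinely left open in both accounts and would need to be verified for a fully rigorous argument.
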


\begin{proof}
Let $x\in P$ and suppose first that $M$ and $N$ are persistence modules. We have the following natural isomorphisms
\begin{gather*}
\scHom^*(M,N)_x\cong \scHom^*(M,N)(U_x):=\pi_{2*}\scHom(\pi_1^{-1}M,s^{-1}N)(U_x):=\\
:=\scHom(\pi_1^{-1}M,s^{-1}N)(\pi_2^{-1}(U_x)):=\Hom(\pi_1^{-1}M|_{\pi_2^{-1}(U_x)},s^{-1}N|_{\pi_2^{-1}(U_x)})\cong\\
\cong\lim\limits_{b\ge x,d\ge x,c-a\ge 0,d-b\ge 0 }\Homk((\pi_1^{-1}M|_{\pi_2^{-1}(U_x)})_{(a,b)},(s^{-1}N|_{\pi_2^{-1}(U)})_{(c,d)})\cong\\
\cong \lim\limits_{d\ge b\ge x,c-a\ge 0}\Homk (M_a,N_{c+d})\cong \lim\limits_{d+c-a\ge x} \Homk(M_a,N_{c+d})=\uHom(M,N)_x
\end{gather*}
The crucial step in the above composition of canonical isomorphisms was the observation 
\begin{gather*}
\Hom(\pi_1^{-1}M|_{\pi_2^{-1}(U_x)},s^{-1}N|_{\pi_2^{-1}(U_x)})\cong \\
\cong\lim\limits_{b\ge x,d\ge x,c-a\ge 0,d-b\ge 0 }\Homk((\pi_1^{-1}M|_{\pi_2^{-1}(U_x)})_{(a,b)},(s^{-1}N|_{\pi_2^{-1}(U)})_{(c,d)}),
\end{gather*}
which follows from the remark in \cref{def:internal_hom}. The derived statement $R\scHom^{*}(M,N)\cong R\uHom(M,N)$ follows immediately by the uniqueness property of derived functors, see for example \cite{grothendieck1957quelques}.
\end{proof}

The bifunctor $-\grtensor -$ is the left adjoint of $\uHom(-,-)$, see \cite{bubenik2019homological}. Note that the formulation of $R\scHom^{*}$ is very much analogous in symbols to the definition of the right adjoint of the classical convolution functor, $\scHom^{\star}$ from \cref{def:convolution_adjoint} introduced by Tamarkin in \cite{tamarkin2013microlocal}, however the underlying formula is very different. For instance, we do not use the exceptional inverse image functor of the addition map in our definition $s^{!}$. Recall that we showed that our cosheaf convolution is canonically isomorphic to the graded module tensor product, an operation well studied in graded module theory. Now we have shown its adjoint, $\uHom$, another classical operation in graded module theory, used for example to define the Matlis dual of a module, has a canonical sheaf theoretic interpretation, that has some resemblance to functors studied in microlocal analysis. This adjointness is extended in the derived setting as persistence modules are a Grothendieck category and thus have enough injectives and projectives. We thus have the following corollary. 

\begin{corollary}
\label{corollary:graded_module_adjunction}
For every triple of persistence modules $M,N$ and $P$ there exist canonical isomorphisms
\[\Hom_{D^{b}(\cat{Vect}_{\cat{k}}^{\cat{P}})}(M\bullet^L N,P)\cong \Hom_{D^{b}(\cat{Vect}_{\cat{k}}^{\cat{P}})}(M,R\scHom^*(N,P)).\]
\end{corollary}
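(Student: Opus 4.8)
The plan is to deduce the adjunction from its underived counterpart together with the two identifications already established. By \cref{theorem:convolution_of_cosheaves_is_graded_tensor} there is a canonical isomorphism $M\bullet^L N\cong M\grtensor^L N$, and by \cref{theorem:graded_module_hom_as_a_sheaf} a canonical isomorphism $R\scHom^*(N,P)\cong R\uHom(N,P)$. Substituting these into the two Hom-groups, the corollary reduces to producing a canonical isomorphism
\[
\Hom_{D^b(\cat{Vect}_{\cat{k}}^{\cat{P}})}(M\grtensor^L N,P)\cong\Hom_{D^b(\cat{Vect}_{\cat{k}}^{\cat{P}})}(M,R\uHom(N,P)),
\]
that is, the derived form of the tensor--hom adjunction $-\grtensor N\dashv\uHom(N,-)$ recorded in \cite{bubenik2019homological}, with the variable $N$ held fixed.

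To establish this I would compute both sides by resolutions. By \cref{prop:enough_injectives_and_projectives} the category $\cat{Vect}_{\cat{k}}^{\cat{P}}$ has enough projectives and injectives, so choose a bounded-above complex of projectives $Q\to M$ resolving $M$ and a bounded-below complex of injectives $P\to I$ resolving $P$. Since $-\grtensor N$ is right exact, its left derived functor is computed by resolving the first argument, so $M\grtensor^L N$ is represented by $Q\grtensor N$; dually $R\uHom(N,P)$ is represented by $\uHom(N,I)$. The underived adjunction gives, in each pair of cohomological degrees, natural isomorphisms $\Hom(Q^p\grtensor N, I^q)\cong\Hom(Q^p,\uHom(N,I^q))$, and because the adjunction is natural these assemble into an isomorphism of the associated Hom-complexes that is compatible with the differentials; in particular it carries chain maps to chain maps and chain homotopies to chain homotopies.

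The desired isomorphism then follows from the chain
\[
\Hom_{D^b}(Q\grtensor N, I)\cong\Hom_{K}(Q\grtensor N, I)\cong\Hom_{K}(Q,\uHom(N,I))\cong\Hom_{D^b}(Q,\uHom(N,I)),
\]
where the first isomorphism uses that $I$ is a bounded-below complex of injectives (so morphisms in the derived category into $I$ coincide with homotopy classes of chain maps), the middle isomorphism is the degreewise adjunction of the previous paragraph passed to homotopy classes, and the last uses that $Q$ is a bounded-above complex of projectives (so morphisms out of $Q$ coincide with homotopy classes). Reading the two ends through the quasi-isomorphisms $Q\simeq M$, $Q\grtensor N\simeq M\grtensor^L N$, and $\uHom(N,I)\simeq R\uHom(N,P)$ yields the claim; note that since all the objects at the endpoints genuinely lie in $D^b$, the ambient Hom-groups agree with those computed in $D^b$.

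The main obstacle is the bookkeeping that lifts the degreewise adjunction to the level of the homotopy and derived categories: one must check that the natural isomorphism of Hom-complexes induced by the adjunction commutes with the total differentials (signs included), so that it descends to $K$ and hence to the derived category, and that the resulting isomorphism is natural in $M$, $N$ and $P$, which is exactly what licenses calling it canonical. A secondary point to verify is that deriving $\grtensor$ in the single variable $M$ indeed computes $M\grtensor^L N$, i.e.\ the balancing of the derived tensor product, so that the representative $Q\grtensor N$ is legitimate; this is standard and follows from the flatness of projective modules. Everything else is routine once the two identifications from \cref{theorem:convolution_of_cosheaves_is_graded_tensor} and \cref{theorem:graded_module_hom_as_a_sheaf} are in hand.
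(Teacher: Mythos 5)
Your proposal is correct and follows essentially the same route as the paper: the paper also reduces the corollary to the derived form of the adjunction $-\grtensor N\dashv\uHom(N,-)$ from \cite{bubenik2019homological} via \cref{theorem:convolution_of_cosheaves_is_graded_tensor} and \cref{theorem:graded_module_hom_as_a_sheaf}, invoking the existence of enough projectives and injectives (\cref{prop:enough_injectives_and_projectives}) to pass to the derived setting. You have merely spelled out the standard resolution argument that the paper leaves implicit.
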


Observe that \cref{def:convolution_of_cosheaves_poset_topology,def:convolution_of_sheaves_poset_topology} are analogous to \cref{def:convolution_of_constructible_sheaves} and that \cref{prop:properties_of_convolving_object_sheaf,prop:properties_of_convolving_object_cosheaf} are analogous to \cref{prop:properties_of_convolving_object_constr_sheaf}. We believe this justifies  calling the operations we defined as convolutions. However, the reader should note that in our convolution definitions we are considering the direct image of the addition map $s$, while the convolution of sheaves involves the direct image with proper supports of the addition map $s$ \cref{section:background_sheaves}. Our choice of only considering the direct image and not direct image with proper supports was because of the following two reasons. A useful tool for computing the direct image with proper supports comes in the form of Beck-Chevaley Theorem (\cref{theorem:beck_chevaley}). The theorem states that computing the direct image with proper supports is equivalent to computing the compactly supported cohomology of the fibers. However, the topological spaces involved are assumed to be Hausdorff (and locally compact). Note that $\mathbb{R}^n$ is not a Hausdorff space with respect to the Alexandrov topology (for any $a,b\in \mathbb{R}^n$, $U_a\cap U_b\neq\emptyset$ and $D_a\cap D_b\neq \emptyset$). Hence, even if we had considered $s_!$ in our definition, we would have no easy way of computing the convolution for most of the examples of interest to us. The second reason is \cref{theorem:convolution_of_cosheaves_is_graded_tensor} which says that our cosheaf convolution is canonically isomorphic to the derived graded module tensor product, a canonical operation in graded module theory. This enforces our belief that we are on the right track with the cosheaf definition.

\subsection{Convolution distance}
In this section we assume $P=\mathbb{R}^n$ and use the sheaf and cosheaf convolutions of derived complexes of persistence modules to define a convolution distance. 

\begin{definition}
\label{def:convolution_distance_sheaf}
Let $M$ and $N$ be in $D^b(\cat{Vect}_{\cat{k}}^{\cat{R}^n})$ and let $\epsilon\ge 0$. We say $M$ and $N$ are $\epsilon$-isomorphic if there are morphisms $f:\mathbf{k}[D_{\bm{\epsilon}}]* M\to N$ and $g:\mathbf{k}[D_{\bm{\epsilon}}]* N\to M$ in $D^b(\cat{Vect}_{\cat{k}}^{\cat{R}^n})$ such that the composition $\mathbf{k}[D_{2\bm{\epsilon}}]* M\xrightarrow{\mathbf{k}[D_{\bm{\epsilon}}]* f}\mathbf{k}[D_{\bm{\epsilon}}]* N\xrightarrow{g} M$ coincides with the natural morphism $\mathbf{k}[D_{2\bm{\epsilon}}]* M\to M$ and the composition $\mathbf{k}[D_{2\bm{\epsilon}}]* N\xrightarrow{\mathbf{k}[D_{\bm{\epsilon}}]* g}\mathbf{k}[D_{\bm{\epsilon}}]* M\xrightarrow{g}  N$ coincides with the natural morphism $\mathbf{k}[D_{2\bm{\epsilon}}]*N \to N$. If $M$ and $N$ are $\epsilon$-isomorphic, then they are $\delta$-isomorphic for any $\delta\ge \epsilon$. Thus, one can define
\[d^C(M,N)=\inf(\{+\infty\}\cup \{\epsilon\in \mathbb{R}_{\ge 0}\,|\, \text{M and N are } \epsilon\text{-isomorphic}\})\]
\end{definition}

\begin{definition}
\label{def:convolution_distance_cosheaf}
Let $M$ and $N$ be in $D^b(\cat{Vect}_{\cat{k}}^{\cat{R}^n})$ and let $\epsilon\ge 0$. We say $M$ and $N$ are $\epsilon$-isomorphic if there are morphisms $f:\mathbf{k}[U_{\bm{\epsilon}}]\grtensor M\to N$ and $g:\mathbf{k}[U_{\bm{\epsilon}}]\grtensor N\to M$ in $D^b(\cat{Vect}_{\cat{k}}^{\cat{R}^n})$ such that the composition $\mathbf{k}[U_{2\bm{\epsilon}}]\grtensor M\xrightarrow{\mathbf{k}[U_{\bm{\epsilon}}]\grtensor f}\mathbf{k}[U_{\bm{\epsilon}}]\grtensor N\xrightarrow{g}  M$ coincides with the natural morphism $\mathbf{k}[U_{2\bm{\epsilon}}]\grtensor M\to M$ and the composition $\mathbf{k}[U_{2\bm{\epsilon}}]\grtensor N\xrightarrow{\mathbf{k}[U_{\bm{\epsilon}}]\grtensor g}\mathbf{k}[U_{\bm{\epsilon}}]\grtensor M\xrightarrow{g} N$ coincides with the natural morphism $\mathbf{k}[U_{2\bm{\epsilon}}]\grtensor N \to N$. If $M$ and $N$ are $\epsilon$-isomorphic, then they are $\delta$-isomorphic for any $\delta\ge \epsilon$. Thus, one can define
\[d_C(M,N)=\inf(\{+\infty\}\cup \{\epsilon\in \mathbb{R}_{\ge 0}\,|\, \text{M and N are } \epsilon\text{-isomorphic}\})\]
\end{definition}

\begin{lemma}
\label{lemma:convolution_distances_agree}
Let $M$ and $N$ be in $D^b(\cat{Vect}_{\cat{k}}^{\cat{R}^n})$. Then $d_C(M,N)=d^C(M,N)$.
\end{lemma}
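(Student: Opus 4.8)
The plan is to show that the two notions of ``$\epsilon$-isomorphic'' appearing in \cref{def:convolution_distance_sheaf,def:convolution_distance_cosheaf} literally coincide, so that the sets over which the two infima are taken are identical and hence $d_C=d^C$. The bridge between them is the translation functor: by \cref{lemma:thickening_alexandrov_sheaf} the sheaf-theoretic convolving operation satisfies $\mathbf{k}[D_{\bm{\epsilon}}]* M\cong M(-\bm{\epsilon})$, while by \cref{lemma:thickening_alexandrov_cosheaf} the cosheaf-theoretic operation satisfies $\mathbf{k}[U_{\bm{\epsilon}}]\grtensor M\cong M(-\bm{\epsilon})$. Composing these two canonical isomorphisms yields, for every $M$ and every $\bm{\epsilon}$, a canonical isomorphism $\theta_{\bm{\epsilon}}^M\colon \mathbf{k}[D_{\bm{\epsilon}}]* M\isomto \mathbf{k}[U_{\bm{\epsilon}}]\grtensor M$ in $D^b(\cat{Vect}_{\cat{k}}^{\cat{R}^n})$ that is natural in $M$.

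First I would use $\theta$ to transport the interleaving data. Given a sheaf-type $\epsilon$-isomorphism, i.e. morphisms $f\colon \mathbf{k}[D_{\bm{\epsilon}}]* M\to N$ and $g\colon \mathbf{k}[D_{\bm{\epsilon}}]* N\to M$, I set $\tilde f:=f\circ(\theta_{\bm{\epsilon}}^{M})^{-1}$ and $\tilde g:=g\circ(\theta_{\bm{\epsilon}}^{N})^{-1}$, which are morphisms $\mathbf{k}[U_{\bm{\epsilon}}]\grtensor M\to N$ and $\mathbf{k}[U_{\bm{\epsilon}}]\grtensor N\to M$, respectively; since $\theta$ is an isomorphism, pre-composition with its inverse is a bijection on the relevant Hom-sets. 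The content of the argument is then to verify that $f,g$ satisfy the two triangle identities of \cref{def:convolution_distance_sheaf} if and only if $\tilde f,\tilde g$ satisfy the corresponding identities of \cref{def:convolution_distance_cosheaf}.

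The key step is to check that the two structural (``natural'') morphisms $\mathbf{k}[D_{2\bm{\epsilon}}]* M\to M$ and $\mathbf{k}[U_{2\bm{\epsilon}}]\grtensor M\to M$ agree under $\theta$. Both are induced by the canonical comparison of convolving objects (part 2 of \cref{prop:properties_of_convolving_object_sheaf} and of \cref{prop:properties_of_convolving_object_cosheaf}) together with the identifications $\mathbf{k}[D_0]*M\cong M$ and $\mathbf{k}[U_0]\grtensor M\cong M$; under the translation identifications both become the single canonical morphism $M(-2\bm{\epsilon})\to M$ whose degreewise components are the structure maps $M^n_{x-2\bm{\epsilon}\le x}$. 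Granting this, the compatibility of $\theta$ with the halving operations---i.e. that $\theta$ identifies $\mathbf{k}[D_{\bm{\epsilon}}]* f$ with $\mathbf{k}[U_{\bm{\epsilon}}]\grtensor\tilde f$ after applying the associativity isomorphisms $\mathbf{k}[D_{2\bm{\epsilon}}]*M\cong\mathbf{k}[D_{\bm{\epsilon}}]*(\mathbf{k}[D_{\bm{\epsilon}}]*M)$ and its cosheaf analogue from part 1 of the same propositions---turns each sheaf triangle diagram into the corresponding cosheaf triangle diagram. Hence $M$ and $N$ are sheaf-$\epsilon$-isomorphic if and only if they are cosheaf-$\epsilon$-isomorphic, the two defining sets coincide, and $d_C(M,N)=d^C(M,N)$.

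The main obstacle I anticipate is precisely the naturality and coherence bookkeeping in the key step: one must confirm that the canonical isomorphisms of \cref{lemma:thickening_alexandrov_sheaf,lemma:thickening_alexandrov_cosheaf} are compatible with the comparison morphisms $\mathbf{k}[D_{\bm{\delta}}]\to\mathbf{k}[D_{\bm{\epsilon}}]$ and $\mathbf{k}[U_{\bm{\delta}}]\to\mathbf{k}[U_{\bm{\epsilon}}]$ and with the additivity/associativity isomorphisms, so that $\theta$ assembles into a coherent natural transformation between the two families of convolving functors rather than a mere objectwise isomorphism. Once this coherence is in place the triangle identities transport formally, and no further computation is needed.
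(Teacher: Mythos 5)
Your proposal is correct and follows essentially the same route as the paper: the paper's proof simply invokes \cref{lemma:thickening_alexandrov_cosheaf,lemma:thickening_alexandrov_sheaf} to get $\mathbf{k}[U_{a}]\grtensor M\cong M(-a)\cong \mathbf{k}[D_a]*M$ and concludes directly from the definitions of $d_C$ and $d^C$. You have merely spelled out the naturality and coherence bookkeeping that the paper leaves implicit, which is a reasonable (indeed more careful) elaboration of the same argument.
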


\begin{proof}
By 
\cref{lemma:thickening_alexandrov_cosheaf,lemma:thickening_alexandrov_sheaf} 
we have that for every $a\in \mathbb{R}^n$, $\mathbf{k}\grtensor M\cong M(-a)\cong \mathbf{k}[D_a]*M$. Thus the result follows from the definitions of $d_C$ and $d^C$.
\end{proof}

From now one we use $d_C$ to denote both $d_C$ and $d^C$. Furthemore, by construction $d_C$ is an extended pseudo-metric on $D^b(\cat{Vect}_{\cat{k}}^{\cat{R}^n})$. The authors in \cite{kashiwara2018persistent} use the idea behind the definition of the interleaving distance of persistence modules to define a convolution distance between complexes of sheaves on euclidean space. We now show that in fact the classical interleaving distance is a type of convolution in the sense that the convolution distance extends it in the derived setting.

\begin{theorem}[\textbf{Theorem 1.1}]
\label{theorem:interleaving_is_convolution}
Let $M$ and $N$ be persistence modules which we think of as complexes of persistence modules concentrated in degree $0$. Then $d_C(M,N)=d_I(M,N)$.
\end{theorem}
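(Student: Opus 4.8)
The plan is to show that, for persistence modules $M$ and $N$ regarded as complexes concentrated in degree $0$, the defining condition of $\epsilon$-isomorphism in \cref{def:convolution_distance_cosheaf} is equivalent to the defining condition of $\epsilon$-interleaving in \cref{def:interleaving_distance_of_persistence_modules}, so that the two infima defining $d_C$ and $d_I$ range over identical sets of $\epsilon$. By \cref{lemma:convolution_distances_agree} it suffices to work with the cosheaf convolution version of $d_C$ (the $\grtensor$-version). The main tool is \cref{lemma:thickening_alexandrov_cosheaf}, which supplies a natural isomorphism $\mathbf{k}[U_{\bm{\epsilon}}]\grtensor M\cong M(-\bm{\epsilon})$; since $\mathbf{k}[U_{\bm{\epsilon}}]$ is $\grtensor$-flat by \cref{prop:classification_into_flats_and_injectives}, this isomorphism already holds before deriving, so $\mathbf{k}[U_{\bm{\epsilon}}]\grtensor M$ is again a complex concentrated in degree $0$.

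First I would reduce the comparison of morphisms to the abelian category. Since $M(-\bm{\epsilon})$ and $N$ are concentrated in degree $0$ and the localization functor $\iota_{\cat{A}}\colon\cat{A}\to D(\cat{A})$ is fully faithful, there are bijections $\Hom_{D^b(\cat{Vect}_{\cat{k}}^{\cat{R}^n})}(\mathbf{k}[U_{\bm{\epsilon}}]\grtensor M,N)\cong\Hom_{D^b(\cat{Vect}_{\cat{k}}^{\cat{R}^n})}(M(-\bm{\epsilon}),N)\cong\Hom(M(-\bm{\epsilon}),N)$, and symmetrically with $M$ and $N$ exchanged. Thus a pair of derived morphisms $f,g$ as in \cref{def:convolution_distance_cosheaf} corresponds uniquely to a pair of persistence-module morphisms $f_0\colon M(-\bm{\epsilon})\to N$ and $g_0\colon N(-\bm{\epsilon})\to M$ as in \cref{def:interleaving_distance_of_persistence_modules}.

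Next I would check that the two composition conditions correspond under this bijection. Using the symmetry of $\grtensor$, the exactness of $\mathbf{k}[U_{\bm{\epsilon}}]\grtensor-$ (so that it commutes with $\iota_{\cat{A}}$), and the naturality of the isomorphism in \cref{lemma:thickening_alexandrov_cosheaf}, the morphism $\mathbf{k}[U_{\bm{\epsilon}}]\grtensor f$ is identified with $f_0(-\bm{\epsilon})$, while \cref{prop:properties_of_convolving_object_cosheaf}(1) identifies its source $\mathbf{k}[U_{\bm{\epsilon}}]\grtensor(\mathbf{k}[U_{\bm{\epsilon}}]\grtensor M)$ with $\mathbf{k}[U_{2\bm{\epsilon}}]\grtensor M\cong M(-2\bm{\epsilon})$. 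Moreover, the canonical structure morphism $\mathbf{k}[U_{2\bm{\epsilon}}]\grtensor M\to M$ occurring in \cref{def:convolution_distance_cosheaf} is, by \cref{prop:properties_of_convolving_object_cosheaf}(2) applied with parameters $2\epsilon$ and $0$, exactly the natural transformation $M(-2\bm{\epsilon})\to M$ with components $M_{x-2\bm{\epsilon}\le x}$ used in \cref{def:interleaving_distance_of_persistence_modules}. Transporting every arrow this way turns the two triangle-commutativity conditions of \cref{def:convolution_distance_cosheaf} into the two interleaving conditions of \cref{def:interleaving_distance_of_persistence_modules} verbatim.

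Consequently $M$ and $N$ are $\epsilon$-isomorphic in the convolution sense if and only if they are $\epsilon$-interleaved, the two sets of admissible $\epsilon$ coincide, and $d_C(M,N)=d_I(M,N)$. The main obstacle is bookkeeping rather than conceptual: one must verify that the several canonical isomorphisms in play---the thickening isomorphism of \cref{lemma:thickening_alexandrov_cosheaf}, the monoidal isomorphism of \cref{prop:properties_of_convolving_object_cosheaf}(1), and the full faithfulness of $\iota_{\cat{A}}$---are mutually compatible, so that composing the translated arrows in $\cat{Vect}_{\cat{k}}^{\cat{R}^n}$ agrees with first composing the roofs in $D^b(\cat{Vect}_{\cat{k}}^{\cat{R}^n})$ and then transporting along the identifications. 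This compatibility is precisely what the naturality clauses of \cref{lemma:thickening_alexandrov_cosheaf} and \cref{prop:properties_of_convolving_object_cosheaf} guarantee, and it is the only point requiring genuine care.
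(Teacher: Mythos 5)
Your argument is correct and follows essentially the same route as the paper's own (very terse) proof: identify $\mathbf{k}[U_{\bm{\epsilon}}]\grtensor M$ with $M(-\bm{\epsilon})$ via \cref{lemma:thickening_alexandrov_cosheaf}, and use full faithfulness of the localization functor to match morphisms and the composition conditions on both sides. Your write-up simply makes explicit the bookkeeping that the paper leaves to the reader.
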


\begin{proof}
This follows from the definitions and \cref{lemma:thickening_alexandrov_cosheaf} or \cref{lemma:thickening_alexandrov_sheaf} and the fact that the localization functor $\iota_{\cat{Vect}_{\cat{k}}^{\cat{R}^n}}:\cat{Vect}_{\cat{k}}^{\cat{R}^n}\to D(\cat{Vect}_{\cat{k}}^{\cat{R}^n})$ is fully faithful.
\end{proof}

\begin{proposition}
\label{theorem:interleaving_of_complexes}
Let $M$ and $N$ be in $D^b(\cat{Vect}_{\cat{k}}^{\cat{R}^n})$. Suppose that the boundary maps of $M$ and $N$ are $0$. Then
\[d_C(M,N)\le \max_nd_I(M^n,N^n).\]
Without the $0$ boundary maps assumption, we still have
 \[d_C(H^{\bullet}(M),H^{\bullet}(N)) \le\max_nd_I(H^n(M),H^n(N))\].
\end{proposition}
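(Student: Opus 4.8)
The plan is to prove the first inequality and then deduce the second as an immediate consequence, working throughout with the cosheaf description of the convolution distance from \cref{def:convolution_distance_cosheaf} (legitimate since $d_C$ and $d^C$ coincide by \cref{lemma:convolution_distances_agree}). If $\max_n d_I(M^n,N^n)=+\infty$ there is nothing to prove, so assume this maximum is finite. Since the differentials of $M$ and $N$ vanish, each is canonically identified in $D^b(\cat{Vect}_{\cat{k}}^{\cat{R}^n})$ with the finite direct sum of its shifted components, $M\cong\bigoplus_n M^n[-n]$ and $N\cong\bigoplus_n N^n[-n]$, where each $M^n$ and $N^n$ is a persistence module viewed as a complex concentrated in degree $0$.

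The core of the argument is to transport interleavings in each degree to an $\epsilon$-isomorphism of the assembled complexes. Fix any $\epsilon>\max_n d_I(M^n,N^n)$. For each $n$ the modules $M^n$ and $N^n$ are then $\epsilon$-interleaved, so by \cref{theorem:interleaving_is_convolution} they are $\epsilon$-isomorphic as degree-$0$ complexes; this produces morphisms $f_n\colon \mathbf{k}[U_{\bm{\epsilon}}]\grtensor M^n\to N^n$ and $g_n\colon \mathbf{k}[U_{\bm{\epsilon}}]\grtensor N^n\to M^n$ in $D^b(\cat{Vect}_{\cat{k}}^{\cat{R}^n})$ satisfying the two composition identities of \cref{def:convolution_distance_cosheaf}. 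I would then set $f:=\bigoplus_n f_n[-n]$ and $g:=\bigoplus_n g_n[-n]$. The key structural input is that $\mathbf{k}[U_{\bm{\epsilon}}]\grtensor-$ is exact, since $\mathbf{k}[U_{\bm{\epsilon}}]$ is $\grtensor$-flat by \cref{prop:classification_into_flats_and_injectives}; hence it is a triangulated endofunctor that commutes with the shift $[-n]$ and with finite direct sums, giving $\mathbf{k}[U_{\bm{\epsilon}}]\grtensor M\cong\bigoplus_n(\mathbf{k}[U_{\bm{\epsilon}}]\grtensor M^n)[-n]$, and similarly for $N$ and for the twofold convolution $\mathbf{k}[U_{2\bm{\epsilon}}]\grtensor M$.

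With $f$ and $g$ in hand one verifies the two composition identities summand by summand: because each was arranged to hold for the pair $(M^n,N^n)$ and because convolution, shift, and direct sum are all functorial, the composites $g\circ(\mathbf{k}[U_{\bm{\epsilon}}]\grtensor f)$ and $f\circ(\mathbf{k}[U_{\bm{\epsilon}}]\grtensor g)$ reduce to the direct sums of the shifted per-degree composites. This shows $M$ and $N$ are $\epsilon$-isomorphic, whence $d_C(M,N)\le\epsilon$; taking the infimum over all such $\epsilon$ gives $d_C(M,N)\le\max_n d_I(M^n,N^n)$. For the second statement I would simply apply this to the cohomology complexes $H^{\bullet}(M)$ and $H^{\bullet}(N)$, which by definition have vanishing differentials and $n$-th terms $H^n(M)$ and $H^n(N)$; the first inequality then reads $d_C(H^{\bullet}(M),H^{\bullet}(N))\le\max_n d_I(H^n(M),H^n(N))$.

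The step I expect to be the main obstacle is the summand-by-summand verification in the derived category. One must confirm that the natural morphism $\mathbf{k}[U_{2\bm{\epsilon}}]\grtensor M\to M$ of \cref{def:convolution_distance_cosheaf} decomposes as the direct sum of the shifted natural morphisms $(\mathbf{k}[U_{2\bm{\epsilon}}]\grtensor M^n\to M^n)[-n]$ — which holds because this morphism is $M\grtensor\iota$ for the canonical $\iota\colon\mathbf{k}[U_{2\bm{\epsilon}}]\to\mathbf{k}[U_0]$ and convolving with a fixed morphism is additive and shift-compatible — and that composition of the representing roofs is compatible with forming biproducts and shifts. This is formal but delicate, since the identities must be checked as equalities of morphisms in $D^b(\cat{Vect}_{\cat{k}}^{\cat{R}^n})$ rather than of honest chain maps.
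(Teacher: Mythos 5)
Your proof is correct and follows essentially the same route as the paper's: reduce to the individual degrees, use the per-degree interleavings, and exploit the vanishing differentials to combine them. The paper just packages the combination step more directly — since the differentials are zero, the degree-wise interleaving morphisms $f^n,g^n$ trivially assemble into honest chain maps $M(-\bm{\delta})\to N$ and $N(-\bm{\delta})\to M$ whose localizations give the $\delta$-isomorphism, which sidesteps the derived-category biproduct bookkeeping you flag as the delicate step.
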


\begin{proof}
If $\max_nd_I(M^n,N^n)=\infty$ we are done. So suppose $\max_nd_I(M^n,N^n)<\infty$, say $\max_nd_I(M^n,N^n)=\epsilon$.  Let $\delta>\epsilon$. Then, the persistence modules $M^n$ and $N^n$ are $\delta$-interleaved for every $n\in \mathbb{Z}$. Thus there exist persistence module morphisms $f^n:M^n(-\bm{\delta})\to N^n$ and $g^n:N^n(-\bm{\delta})\to M^n$ that achieve the $\delta$-interleaving for all $n\in \mathbb{Z}$. As all the boundary maps in the complexes $M$ and $N$ are $0$, the sequences of persistence module morphisms $f^n$ and $g^n$ assemble into chain maps $f:M(-\bm{\delta})\to N$ and $g:N(-\bm{\delta})\to N$. The localizations of the maps $f$ and $g$ thus give the desired $\delta$-isomorphism in $D^b(\cat{Vect}_{\cat{k}}^{\cat{R}^n})$. Thus the result follows. 
\end{proof}

\section{Stability}
\label{section:stability}
In this section we assume $P=\mathbb{R}^n$ and discuss stability results involving the convolution distance of bounded derived complexes of persistence modules using the sheaf and cosheaf points of view. Let $f:X\to \mathbb{R}^n$ be a set-theoretic function. Note that for every $a\le b\in \mathbb{R}^n$ we have inclusion maps $f^{-1}(D_a)\to f^{-1}(D_b)$ and $f^{-1}(U_b)\to f^{-1}(U_a)$. Applying singular homology and cohomology in degree $n$ functors with coefficients in a field $\mathbf{k}$ for every $a\in \mathbb{R}^n$ we obtain vector spaces $H_n(f^{-1}(D_a);\mathbf{k})$ and $H^n(f^{-1}(U_a);\mathbf{k})$. Furthermore, for $a\le b$ the mentioned inclusions maps induce $\mathbf{k}$-linear maps $H_n(f^{-1}(D_a);\mathbf{k})\to H_n(f^{-1}(D_b);\mathbf{k})$ and $H^n(f^{-1}(U_a);\mathbf{k})\to H^n(f^{-1}(U_b);\mathbf{k})$, respectively. Thus, by considering homology and cohomology of inverse images of principal down-sets and up-sets in $\mathbb{R}^n$, respectively, we obtain two persistence modules which we label by $H_nf$ and $H^nf$ respectively. We have the following classical result in persistence theory.

\begin{theorem}{\cite[Theorems 4.3,4.4 and Example 4.6]{bubenik2015metrics}}
\label{theorem:stability}
Let $X$ be a topological space and let $f,g:X\to \mathbb{R}^n$ be set-theoretic functions. Then $d_I(H_{n}f,H_{n}g)\le ||f-g||_{\infty}$ and $d_I(H^{n}f,H^{n}g)\le ||f-g||_{\infty}$.
\end{theorem}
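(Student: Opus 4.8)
The plan is to construct explicit $\epsilon$-interleavings directly from inclusions of sublevel and superlevel sets, where $\epsilon=||f-g||_{\infty}$; if $\epsilon=\infty$ both inequalities are vacuous, so I assume $\epsilon<\infty$. The only geometric input needed is elementary: for every $x\in X$ and every coordinate $i$ we have $|f_i(x)-g_i(x)|\le\epsilon$, so, writing $\bm{\epsilon}$ for the vector all of whose entries equal $\epsilon$, the implication $f(x)\le a-\bm{\epsilon}\Rightarrow g(x)\le a$ holds (and symmetrically with $f,g$ swapped). Passing to principal down-sets, this gives inclusions of sublevel sets $f^{-1}(D_{a-\bm{\epsilon}})\subseteq g^{-1}(D_a)$ and $g^{-1}(D_{a-\bm{\epsilon}})\subseteq f^{-1}(D_a)$ for every $a\in\mathbb{R}^n$; dually, for superlevel sets, $g^{-1}(U_a)\subseteq f^{-1}(U_{a-\bm{\epsilon}})$ and $f^{-1}(U_a)\subseteq g^{-1}(U_{a-\bm{\epsilon}})$.

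First I would treat the homology case. Applying the covariant functor $H_n(-;\mathbf{k})$ to the inclusions $f^{-1}(D_{a-\bm{\epsilon}})\subseteq g^{-1}(D_a)$ produces, for each $a$, a linear map $(H_nf)_{a-\bm{\epsilon}}\to(H_ng)_a$; because homology is functorial and these inclusions commute with the transition inclusions of the two filtrations, they assemble into a morphism of persistence modules $\phi\colon(H_nf)(-\bm{\epsilon})\to H_ng$, matching the convention $M(-\bm{\epsilon})_a=M_{a-\bm{\epsilon}}$. The symmetric family of inclusions gives $\psi\colon(H_ng)(-\bm{\epsilon})\to H_nf$. These are exactly the data required by \cref{def:interleaving_distance_of_persistence_modules}.

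Next I would verify the two interleaving identities. The $a$-component of the composite $\psi\circ\phi(-\bm{\epsilon})$ is obtained by applying $H_n$ to the chain of inclusions $f^{-1}(D_{a-2\bm{\epsilon}})\subseteq g^{-1}(D_{a-\bm{\epsilon}})\subseteq f^{-1}(D_a)$. Since the composite inclusion is simply $f^{-1}(D_{a-2\bm{\epsilon}})\subseteq f^{-1}(D_a)$ and $H_n$ carries a composite of inclusions to the composite of the induced maps, this equals the transition map $(H_nf)_{a-2\bm{\epsilon}}\to(H_nf)_a$, which is precisely the structure morphism with components $M_{x-2\bm{\epsilon}\le x}$ demanded in the definition. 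The other composite is identical with the roles of $f$ and $g$ exchanged. Hence $H_nf$ and $H_ng$ are $\epsilon$-interleaved and $d_I(H_nf,H_ng)\le\epsilon$.

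Finally, the cohomology case is the contravariant mirror image. Applying $H^n(-;\mathbf{k})$ to $g^{-1}(U_a)\subseteq f^{-1}(U_{a-\bm{\epsilon}})$ reverses the arrow to give components $(H^nf)_{a-\bm{\epsilon}}\to(H^ng)_a$, assembling into $\phi\colon(H^nf)(-\bm{\epsilon})\to H^ng$, and symmetrically one obtains $\psi\colon(H^ng)(-\bm{\epsilon})\to H^nf$; the two triangle identities again follow from functoriality, now using that contravariant $H^n$ turns a composite of two inclusions into the composite of the two induced maps in the correct order, so that the composite recovers the transition map of $H^nf$ (respectively $H^ng$). The one point demanding care — and the step I expect to be the main obstacle — is the bookkeeping of variance together with the translation index: one must check that the direction of each induced map and the shift by $\bm{\epsilon}$ line up so that $\phi$ and $\psi$ are genuinely morphisms $(H_{\bullet}f)(-\bm{\epsilon})\to H_{\bullet}g$ and not maps in the wrong direction or degree. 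Once the indexing is fixed consistently, both inequalities follow at once, establishing the theorem.
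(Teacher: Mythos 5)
Your argument is correct and is exactly the standard proof underlying the cited result: the paper itself does not prove this theorem but defers to \cite{bubenik2015metrics}, and your interleaving-from-inclusions construction for sublevel sets is precisely the argument of Theorems 4.3--4.4 and Example 4.6 there. Your contravariant treatment of the superlevel-set/cohomology case, with the arrow reversal making $g^{-1}(U_a)\subseteq f^{-1}(U_{a-\bm{\epsilon}})$ induce $(H^nf)_{a-\bm{\epsilon}}\to(H^ng)_a$, is exactly the ``straightforward dual adaptation'' that \cref{remark:stability} asserts but does not carry out, so you have in fact supplied the one piece the paper leaves implicit.
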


\begin{remark}
\label{remark:stability}
Theorems 4.3, 4.4 and Example 4.6 in \cite{bubenik2015metrics} only give us $d_I(H_nf,H_ng)\le ||f-g||_{\infty}$. However, it is straighforward to adapt the arguments in a dual way to prove the cohomology case as well.
\end{remark}

Given a set theoretic function $f:X\to \mathbb{R}^n$ we can also use $f$ to construct complexes of persistence modules in the following way. For every $a$ we can assing to $f^{-1}(D_a)$ the chain complex of $\mathbf{k}$ vector spaces that is the chain complex of singular chains with coefficients in $\mathbf{k}$, $C_{\bullet}(f^{-1}(D_a);\mathbf{k})$. Similarly we can assign to $f^{-1}(U_a)$ and the cochain complex of $\mathbf{k}$ vector spaces that is the cochain complex of singular cochains with coefficients in $\mathbf{k}$, $C^{\bullet}(f^{-1}(U_a);\mathbf{k})$. Denote these complexes of persistence modules by $C_{\bullet}f$ and $C^{\bullet}f$ respectively. Denote by $H_{\bullet}f$ and by $H^{\bullet}f$ the respective homology and cohomology complexes. Note that $(H_{\bullet}f)_n=H_nf$ and $(H^{\bullet}f)^n=H^nf$. From \cref{theorem:stability} and \cref{theorem:interleaving_of_complexes} we have \cref{corollary:stability_complexes}.

\begin{corollary}
\label{corollary:stability_complexes}
Let $X$ be a topological space and let $f,g:X\to \mathbb{R}^n$ be set-theoretic functions. Suppose $H_{\bullet}f$ and $H^{\bullet}f$ are bounded complexes of persistence modules. Then $d_C(H_{\bullet}f,H_{\bullet}g)\le ||f-g||_{\infty}$ and  $d_C(H^{\bullet}f,H^{\bullet}g)\le ||f-g||_{\infty}$.
\end{corollary}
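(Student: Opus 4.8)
The plan is to deduce this as an immediate consequence of Proposition \ref{theorem:interleaving_of_complexes} combined with the classical stability statement Theorem \ref{theorem:stability}, so that no genuinely new convolution-theoretic argument is needed. The essential point is that the complexes named in the statement are, by construction, exactly of the special kind to which the first (and stronger) inequality of Proposition \ref{theorem:interleaving_of_complexes} applies, namely complexes whose differentials all vanish.

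First I would record that the homology complex $H_{\bullet}f$ and the cohomology complex $H^{\bullet}f$ (and likewise for $g$) carry the zero differential. This is immediate from the convention fixed in the background: for a complex $M$ the associated complexes $H_{\bullet}M$ and $H^{\bullet}M$ are defined to have all boundary, respectively coboundary, maps equal to $0$, with $n$-th term $H_n(M)$ and $H^n(M)$. In the present situation we have $(H_{\bullet}f)_n = H_n f$ and $(H^{\bullet}f)^n = H^n f$, and these complexes are bounded by hypothesis, so they are legitimate objects of $D^b(\cat{Vect}_{\cat{k}}^{\cat{R}^n})$ with zero differential.

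Next I would invoke the first part of Proposition \ref{theorem:interleaving_of_complexes}. Because $H^{\bullet}f$ and $H^{\bullet}g$ have zero coboundary maps, the proposition yields
\[
d_C(H^{\bullet}f, H^{\bullet}g) \le \max_n d_I\big((H^{\bullet}f)^n, (H^{\bullet}g)^n\big) = \max_n d_I(H^n f, H^n g).
\]
The homology case is handled identically once $H_{\bullet}f$ and $H_{\bullet}g$ are regarded as complexes with zero differential (relabeling homological degree as cohomological degree, which leaves the maximum over $n$ untouched), giving $d_C(H_{\bullet}f, H_{\bullet}g) \le \max_n d_I(H_n f, H_n g)$.

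Finally I would apply Theorem \ref{theorem:stability} degreewise: for every $n$ one has $d_I(H^n f, H^n g) \le ||f-g||_{\infty}$ and $d_I(H_n f, H_n g) \le ||f-g||_{\infty}$. Since these bounds are uniform in $n$, passing to the maximum preserves them, and chaining with the inequalities of the previous step delivers both assertions. The only place requiring any care — and really the sole ``obstacle'' — is checking that the hypotheses of Proposition \ref{theorem:interleaving_of_complexes} are actually met: boundedness (assumed) and vanishing of the differentials (automatic from the definition of the (co)homology complex). Once these are verified the argument reduces to a short chain of inequalities.
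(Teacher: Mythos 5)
Your proposal is correct and follows exactly the route the paper intends: the paper derives this corollary directly from \cref{theorem:stability} and \cref{theorem:interleaving_of_complexes}, relying on the convention that $H_{\bullet}f$ and $H^{\bullet}f$ carry zero differentials so that the first (stronger) inequality of the proposition applies. Your write-up simply makes explicit the degreewise application of the classical stability bound and the verification of the zero-differential hypothesis, which is precisely what the paper leaves implicit.
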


\subsection{Stability for direct images}
Here we consider stability of direct images of sheaves and cosheaves on $\mathbb{R}^n$.

\begin{theorem}[Stability for direct images]
\label{theorem:stability_for_direct_images}
Let $X$ be a topological space and let $f,g:X\to \mathbb{R}^n_{\le}$ be continuous maps. Let $F$ be in $D^b(\cat{Sh}(X;\cat{k}))$. Then 
\[d_C(Rf_*F,Rg_*F)\le ||f-g||_{\infty}.\]
Dually, if $G$ is in $D^b(\cat{CoSh}(X;\cat{k}))$. Then 
\[d_C(Lf_{\dagger}G,Lg_{\dagger}G)\le ||f-g||_{\infty}.\]
\end{theorem}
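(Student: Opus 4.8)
The plan is to show that for every $\epsilon\ge ||f-g||_{\infty}$ (if the latter is $+\infty$ there is nothing to prove) the complexes $Rf_*F$ and $Rg_*F$ are $\epsilon$-isomorphic in the sense of \cref{def:convolution_distance_cosheaf}; since $d_C$ is the infimum over all such $\epsilon$, this gives $d_C(Rf_*F,Rg_*F)\le ||f-g||_{\infty}$. By \cref{lemma:thickening_alexandrov_cosheaf} the operation $\mathbf{k}[U_{\bm{\epsilon}}]\grtensor(-)$ is identified with the translation $(-)(-\bm{\epsilon})$, so it suffices to produce morphisms $\phi\colon (Rf_*F)(-\bm{\epsilon})\to Rg_*F$ and $\psi\colon (Rg_*F)(-\bm{\epsilon})\to Rf_*F$ in $D^b(\cat{Vect}_{\cat{k}}^{\cat{R}^n})$ whose two relevant composites coincide with the canonical structure morphisms $(Rf_*F)(-2\bm{\epsilon})\to Rf_*F$ and $(Rg_*F)(-2\bm{\epsilon})\to Rg_*F$.

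The only geometric input is the following elementary observation. If $\epsilon\ge ||f-g||_{\infty}$, then for every $x\in X$ we have $g(x)\le f(x)+\bm{\epsilon}$ and $f(x)\le g(x)+\bm{\epsilon}$, and hence for every up-set $U\subseteq\mathbb{R}^n$ the inclusions $g^{-1}(U)\subseteq f^{-1}(U-\bm{\epsilon})$ and $f^{-1}(U)\subseteq g^{-1}(U-\bm{\epsilon})$ hold: indeed, if $g(x)\in U$ then, $U$ being an up-set and $f(x)+\bm{\epsilon}\ge g(x)$, we get $f(x)+\bm{\epsilon}\in U$, i.e.\ $f(x)\in U-\bm{\epsilon}$, and symmetrically. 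Since $(Rf_*F)(-\bm{\epsilon})$ is computed from $F$ over the sets $f^{-1}(U-\bm{\epsilon})$ and $Rg_*F$ from $F$ over $g^{-1}(U)$, these inclusions are exactly what is needed to restrict.

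To realize the morphisms I would use a single resolution computing both derived images. Because the inverse image functors $f^{-1}$ and $g^{-1}$ on Alexandrov spaces are exact (\cref{example:inverse_image_cosheaf_on_a_poset}, together with the adjunction $f^{-1}\dashv f_*$ of \cref{theorem:direct_inverse_image_adjunction_cosheaf}), the functors $f_*$ and $g_*$ preserve injectives, so an injective resolution $F\xrightarrow{\sim}I^\bullet$ yields $Rf_*F\cong f_*I^\bullet$ and $Rg_*F\cong g_*I^\bullet$ simultaneously. Using $g^{-1}(U)\subseteq f^{-1}(U-\bm{\epsilon})$, the restriction maps of each sheaf $I^k$ assemble into a chain map $\phi\colon (f_*I^\bullet)(-\bm{\epsilon})\to g_*I^\bullet$; it commutes with the differentials because each $d^k\colon I^k\to I^{k+1}$ is a morphism of sheaves and hence natural with respect to restriction. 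The symmetric inclusion gives $\psi\colon (g_*I^\bullet)(-\bm{\epsilon})\to f_*I^\bullet$, and localizing produces the desired morphisms in $D^b(\cat{Vect}_{\cat{k}}^{\cat{R}^n})$. Equivalently and more invariantly, the inclusions define natural transformations of the left-exact functors $(f_*-)(-\bm{\epsilon})\Rightarrow g_*(-)$ and $(g_*-)(-\bm{\epsilon})\Rightarrow f_*(-)$, which descend to the right derived functors since translation is exact.

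Finally, the interleaving identities already hold at the chain level: evaluated on an up-set $U$, the composite $(f_*I^\bullet)(-2\bm{\epsilon})\xrightarrow{\phi(-\bm{\epsilon})}(g_*I^\bullet)(-\bm{\epsilon})\xrightarrow{\psi}f_*I^\bullet$ is a composite of restriction maps of $I^\bullet$ along $f^{-1}(U)\subseteq g^{-1}(U-\bm{\epsilon})\subseteq f^{-1}(U-2\bm{\epsilon})$, which by functoriality of restriction is the single restriction along $f^{-1}(U)\subseteq f^{-1}(U-2\bm{\epsilon})$, i.e.\ exactly the canonical morphism $(f_*I^\bullet)(-2\bm{\epsilon})\to f_*I^\bullet$; the other composite is symmetric. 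This establishes the $\epsilon$-isomorphism, hence the sheaf bound. The cosheaf statement is proved by the dual argument: replace the injective resolution by a projective resolution (now $f_\dagger,g_\dagger$ preserve projectives, since $f^{-1},g^{-1}$ are exact \emph{right} adjoints by \cref{theorem:direct_inverse_image_adjunction_cosheaf}), replace restriction by the corestriction maps of the cosheaf along the same inclusions, and verify the analogous composite identities. The main obstacle is bookkeeping rather than depth: one must ensure a \emph{single} resolution computes both derived direct images (guaranteed by exactness of the inverse images) and that the composite identities hold on the nose as chain maps (guaranteed by functoriality of (co)restriction), with the cosheaf case additionally requiring care in tracking the direction of the structure maps under the opposite Alexandrov topology.
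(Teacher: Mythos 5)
Your proposal is correct and follows essentially the same route as the paper's own proof: both take a single injective resolution $E$ of $F$, use the inclusions $f^{-1}(U)\subseteq g^{-1}(U(-\bm{\epsilon}))\subseteq f^{-1}(U(-2\bm{\epsilon}))$ for up-sets $U$ to assemble restriction maps into chain maps $\Phi\colon f_*E(-\bm{\epsilon})\to g_*E$ and $\Psi\colon g_*E(-\bm{\epsilon})\to f_*E$, and observe that the interleaving identities hold on the nose by functoriality of restriction before localizing. Your added justification that $f_*$ and $g_*$ preserve injectives (via the exact left adjoint $f^{-1}$) makes explicit a point the paper leaves implicit, but the argument is the same.
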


\begin{proof}
We prove the sheaf statement. The cosheaf one is dual. If $||f-g||_{\infty}=\infty$ we are done so suppose that $||f-g||_{\infty}=\epsilon<\infty$. Let $F\to E$ be an injective resolution of $F$. We need to show that $d_C(f_*E,g_*E)\le \epsilon$. Let $U\subseteq\mathbb{R}^n$ be an up-set. Since $||f-g||_{\infty}\le \epsilon$ we have that $f^{-1}(U)\subseteq g^{-1}(U(-\bm{\epsilon}))\subseteq f^{-1}(U(-2\bm{\epsilon}))$ and $g^{-1}(U)\subseteq f^{-1}(U(-\bm{\epsilon}))\subseteq g^{-1}(U(-2\bm{\epsilon}))$. By definition, we have $f_*E^n(U):=E^n(f^{-1}(U))$ for all $n\in \mathbb{Z}$. Thus, we have the following sheaf restriction maps that commute with the boundary maps, as $E$ is assumed to be a complex:
\begin{figure}[H]
\centering
\begin{tikzcd}
E^n(f^{-1}(U(-\bm{\epsilon})))\arrow[r]\arrow[d,"\partial"]&E^n(g^{-1}(U(-\bm{\epsilon})))\arrow[r]\arrow[d,"\partial"]& E^n(f^{-1}(U))\arrow[d,"\partial"]\\
E^{n+1}(f^{-1}(U(-\bm{\epsilon})))\arrow[r] & E^{n+1}(g^{-1}(U(-\bm{\epsilon})))\arrow[r]&E^{n+1}(f^{-1}(U))
\end{tikzcd}
\end{figure}
and
\begin{figure}[H]
\centering
\begin{tikzcd}
E^n(g^{-1}(U(-\bm{\epsilon})))\arrow[r]\arrow[d,"\partial"]&E^n(f^{-1}(U(-\bm{\epsilon})))\arrow[r]\arrow[d,"\partial"]& E^n(g^{-1}(U))\arrow[d,"\partial"]\\
E^{n+1}(g^{-1}(U(-\bm{\epsilon})))\arrow[r] & E^{n+1}(f^{-1}(U(-\bm{\epsilon})))\arrow[r]&E^{n+1}(g^{-1}(U))
\end{tikzcd}
\end{figure}
We can thus define a chain maps $\Phi: f_*E(-\bm{\epsilon})\to g_*E$ and $\Psi:g_*E(-\bm{\epsilon})\to f_*E$ where for each up-set $U\subseteq \mathbb{R}^n$, $\Phi^n (U)$ and $\Psi^n(U)$ are the restrictions $E^n(f^{-1}(U(-\bm{\epsilon})))\to E^n(g^{-1}(U))$ and $E^n(g^{-1}(U(-\bm{\epsilon})))\to E^n (f^{-1}(U))$ respectively. Then by construction, the localizations of the chain maps $\Phi$ and $\Psi$ give us an $\epsilon$-isomorphism between $f_*E$ and $g_*E$ and thus $d_C(f_*E,g_*E)\le \epsilon$.
\end{proof}

The natural question to ask is in how many examples of interest in topological data analysis do we actually encounter continuous maps $f:X\to \mathbb{R}^n_{\le}$ or $f:X\to \mathbb{R}^n_{\le^{op}}$. The following example shows that even a somewhat canonical example, like a Morse function on a circle, does not fit in this framework. 

\begin{example}
Let $p:S^1\to \mathbb{R}$ be the projection onto the $x$-axis of the unit circle, centered somewhere on the $y$-axis (\cref{fig:3}). Note that $p:S^1\to \mathbb{R}^n_{\le}$ and $p:S^1\to \mathbb{R}^n_{\le^{op}}$ are not continuous maps. Indeed, $[0,\infty)\subseteq \mathbb{R}$ is open in $\mathbb{R}^n_{\le}$ and the inverse image $p^{-1}([0,\infty))$ is not. Similarly for $(-\infty,0]$ in $\mathbb{R}^n_{\le^{op}}$.
\label{example:sheaf_direct_image_circle}
\begin{figure}[h]
\centering
\begin{tikzpicture}[line cap=round,line join=round,,x=1.0cm,y=1.0cm,scale=0.5]
\draw[color=black] (-3,0) -- (9,0);
\draw[black, thick] (3,6) circle (2 cm);
\draw[->,color=black] (3,3)--(3,1);
\draw[color=black] (3.5,2) node[scale=0.8] {$p$};
\draw[color=red,line width=1mm] (3,0) -- (9,0);
\draw[color=black] (3,-0.5) node[scale=0.8] {$0$};
\draw [red, xshift=3cm, yshift=6cm, domain=-90:90,scale=2,line width=1mm] plot(\x:1);
\draw[color=black] (12,0) -- (24,0);
\draw[black, thick] (18,6) circle (2 cm);
\draw[->,color=black] (18,3)--(18,1);
\draw[color=black] (18.5,2) node[scale=0.8] {$p$};
\draw[color=red,line width=1mm] (12,0) -- (18,0);
\draw[color=black] (18,-0.5) node[scale=0.8] {$0$};
\draw [red, xshift=18cm, yshift=6cm, domain=90:270,scale=2,line width=1mm] plot(\x:1);
\end{tikzpicture}
\caption {\rmfamily The complex of persistence module $H^{\bullet}p$ is not equal to $Rp_*F$, where $F$ is a complex of sheaves valued of $\mathbf{k}$-vector spaces on $S^1$ that assigns to each open $U\subseteq S^1$  the singular cohomology complex valued in $\mathbf{k}$, $H^{\bullet}(U;\mathbf{k})$. Furthermore, $Rp_*F$ is not even defined as $p$ is not continuous. Same for $H_{\bullet}p$ and $Lp_{\dagger}F$.}
\label{fig:3}
\end{figure}
\end{example}

Thus, if $p:S^1\to \mathbb{R}^n$ is as in \cref{example:sheaf_direct_image_circle} direct images of sheaves or cosheaves of vector spaces on $S^1$ are not defined. In particular, persistent homology/cohomology persistence modules are not obtained as direct images of homology and cohomology sheaves on $S^1$. This observation somewhat limits the applicability of \cref{theorem:stability_for_direct_images}. We attempt to address this issue in two ways, by modifying the topology on the domain and codomain.

\subsection{Stability for modified direct images on the domain}
Let $f:X\to \mathbb{R}^n_{\le}$ be a set-theoretic map. Denote by $X_f$ the topological space with the underlying set $X$ and the topology the pullback topology induced by $f$. That is, $A\subseteq X_f$ is open if and only if there exists an open $U\subseteq \mathbb{R}^n$ such that $f^{-1}(U)=A$. Similarly we denote by $X_f^{op}$ the pullback topology of a map $f:X\to \mathbb{R}^n_{\le^{op}}$. If $X$ is already a topological space to being with, we will abuse notation and denote by $X_f$ the topological space whose open sets are all the open sets in $X$ and also all the open sets in $X_f$. Same for $X_f^{op}$. Thus, the identity maps $\mathbf{1}_{X}:X_f\to X$ and $\mathbf{1}_X:X_f^{op}\to X$ are continuous as the topologies on $X_f$ and $X_f^{op}$ are finer than the one on $X$ by construction. 

\begin{figure}[H]
\centering
\begin{tikzcd}
X_f\arrow[r,"\mathbf{1}_X"]\arrow[dr,"f"]&X\arrow[d,"f"]&&&X_f^{op}\arrow[r,"\mathbf{1}_X"]\arrow[dr,"g"]&X\arrow[d,"g"]\\
&\mathbb{R}^n_{\le}&&&& \mathbb{R}^n_{\le^{op}}
\end{tikzcd}
\caption{If $F$ is a sheaf on $X$ and $f:X\to \mathbb{R}^n_{\le}$ is not necessarily continuous, we can construct the sheaf $f_*\mathbf{1}_{X}^{-1}F$  on $\mathbb{R}^n_{\le}$. Similarly, if $G$ is a cosheaf on $X$ and $g:X\to \mathbb{R}^n_{\le^{op}}$ is not necessarily continuous, we can construct the cosheaf $g_{\dagger}\mathbf{1}_{\mathbb{R}^n}^{-1}G$ on $\mathbb{R}^n_{\le^{op}}$.}
\label{fig:4}
\end{figure}

If $X$ is a topological space and $f,g:X\to \mathbb{R}^n_{\le}$ or $f,g:X\to\mathbb{R}^n_{\le^{op}}$ are not necessarily continuous maps, we denote by $X_{fg}$ and by $X_{fg}^{op}$ the topologies on $X$ that have the open sets all the open sets from $X_f$, $X_g$ and $X$ and $X_f^{op},X_g^{op}$ and $X$ respectively. Thus, the identity maps $\mathbf{1}_X:X_{fg}\to X$ and $\mathbf{1}_X:X_{fg}^{op}\to X$ are continuous. We then we have the following corollary to \cref{theorem:stability_for_direct_images}.

\begin{corollary}
\label{corollary:modified_stability_1}
Let $X$ be a topological space and let $f,g:X\to \mathbb{R}^n$. Let $F$ be in $D^b(\cat{Sh}(X;\cat{k}))$. Then 
\[d_C(Rf_*\mathbf{1}_X^{-1}F,Rg_*\mathbf{1}_X^{-1}F)\le ||f-g||_{\infty}.\]
Dually, if $G$ is in $D^b(\cat{CoSh}(X;\cat{k}))$. Then 
\[d_C(Lf_{\dagger}\mathbf{1}_X^{-1}G,Lg_{\dagger}\mathbf{1}_X^{-1}G)\le ||f-g||_{\infty}.\]
\end{corollary}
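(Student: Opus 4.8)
The plan is to reduce the statement to \cref{theorem:stability_for_direct_images} by passing to the refined topology $X_{fg}$ (respectively $X_{fg}^{op}$) on which the two a priori discontinuous maps $f$ and $g$ become \emph{simultaneously} continuous. Recall that $X_{fg}$ was defined precisely so that $f^{-1}(U)$ and $g^{-1}(U)$ are open for every $U\subseteq \mathbb{R}^n$ open in $\mathbb{R}^n_{\le}$; hence $f,g\colon X_{fg}\to \mathbb{R}^n_{\le}$ are continuous, while the identity $\mathbf{1}_X\colon X_{fg}\to X$ is continuous because the topology of $X_{fg}$ refines that of $X$.

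First I would transport $F$ to $X_{fg}$ along $\mathbf{1}_X$. Since the inverse image functor of sheaves of $\mathbf{k}$-vector spaces is exact (its stalks satisfy $(\mathbf{1}_X^{-1}F)_x\cong F_x$, and stalks detect exactness), the complex $\mathbf{1}_X^{-1}F$ is a well-defined object of $D^b(\cat{Sh}(X_{fg};\cat{k}))$, obtained by applying $\mathbf{1}_X^{-1}$ termwise with no need to derive it.

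Next I would apply \cref{theorem:stability_for_direct_images} verbatim, with $X_{fg}$ in the role of $X$, the continuous maps $f,g\colon X_{fg}\to \mathbb{R}^n_{\le}$, and the object $\mathbf{1}_X^{-1}F\in D^b(\cat{Sh}(X_{fg};\cat{k}))$. This yields $d_C(Rf_*\mathbf{1}_X^{-1}F,Rg_*\mathbf{1}_X^{-1}F)\le ||f-g||_{\infty}$, and the two derived direct images produced this way are exactly the composites $Rf_*\circ\mathbf{1}_X^{-1}$ and $Rg_*\circ\mathbf{1}_X^{-1}$ named in the statement. The cosheaf case is dual: one works on $X_{fg}^{op}$ with codomain $\mathbb{R}^n_{\le^{op}}$, uses \cref{theorem:cosheafification} to guarantee that $\mathbf{1}_X^{-1}G$ is a well-defined bounded complex of cosheaves on $X_{fg}^{op}$, and invokes the cosheaf half of \cref{theorem:stability_for_direct_images} to bound $d_C(Lf_{\dagger}\mathbf{1}_X^{-1}G,Lg_{\dagger}\mathbf{1}_X^{-1}G)$.

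The main obstacle, such as it is, lies entirely in the bookkeeping of this construction rather than in any new estimate: one must check that $X_{fg}$ makes \emph{both} $f$ and $g$ continuous into $\mathbb{R}^n_{\le}$ while keeping $\mathbf{1}_X$ continuous into $X$, so that the composite functor $Rf_*\circ\mathbf{1}_X^{-1}$ is legitimate and coincides with the object appearing in the statement. Once this identification is in place, the inequality is an immediate corollary of \cref{theorem:stability_for_direct_images}, since all the analytic content—the nested inclusions $f^{-1}(U)\subseteq g^{-1}(U(-\bm{\epsilon}))\subseteq f^{-1}(U(-2\bm{\epsilon}))$ controlled by $||f-g||_{\infty}$ and the resulting $\epsilon$-isomorphism—is already packaged inside that theorem.
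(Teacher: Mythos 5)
Your proposal is correct and is essentially the paper's own proof: the paper's argument is precisely to apply \cref{theorem:stability_for_direct_images} to $\mathbf{1}_X^{-1}F$ and $\mathbf{1}_X^{-1}G$, using the refined topologies $X_{fg}$ and $X_{fg}^{op}$ on which $f$ and $g$ become simultaneously continuous. You have simply made explicit the bookkeeping (exactness of $\mathbf{1}_X^{-1}$, continuity of $f,g$ on the refined space) that the paper leaves implicit.
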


\begin{proof}
Apply \cref{theorem:stability_for_direct_images} to $\mathbf{1}_{X}^{-1}F$ and $\mathbf{1}_{X}^{-1}G$.
\end{proof}

\subsection{Stability for modified direct images on the codomain}
Consider the following topology on $\mathbb{R}^n$. A set $U$ is open if and only if it is an up-set and also open in the Euclidean topology on $\mathbb{R}^n$. This is an example of a $\gamma$-topology on $\mathbb{R}^n$ where $\gamma$ is the cone $U_0$ \cite[Chapter 3.5.]{kashiwara1990sheaves}. Dually, we can declare a set $D$ to be open if and only if it is a down-set and open in the Euclidean topology. This is an example of a $\gamma^{op}$-topology where $\gamma^{op}$ is the antipodal cone to $\gamma$, $D_0$. Denote by $\mathbb{R}^n_{\gamma}$ the set $\mathbb{R}^n$ with the $\gamma$ topology and by $\mathbb{R}^n_{\gamma^{op}}$ the set $\mathbb{R}^n$ witht he $\gamma^{op}$ topology. Note that by construction the identity maps $\mathbf{1}_{\mathbb{R}^n}:\mathbb{R}^n_{\le}\to \mathbb{R}^n_{\gamma}$ and $\mathbf{1}_{\mathbb{R}^n}:\mathbb{R}^n_{\le^{op}}\to \mathbb{R}^n_{\gamma^{op}}$ are continuous as the topologies on the domains are finer by construction. If $f:X\to \mathbb{R}^n_{\gamma}$ is continuous we have the following diagram of topological spaces.
\begin{figure}[H]
\centering
\begin{tikzcd}
X\arrow[rd,"f"]\arrow[d,"f"]&&& X\arrow[rd,"g"]\arrow[d,"g"]\\
\mathbb{R}^n_{\le}\arrow[r,"\mathbf{1}_{\mathbb{R}^n}"]&\mathbb{R}^n_{\gamma}&& \mathbb{R}^n_{\ge}\arrow[r,"\mathbf{1}_{\mathbb{R}^n}"]&\mathbb{R}^n_{\gamma^{op}}
\end{tikzcd}
\caption{If $F$ is a sheaf on $X$ and $f:X\to \mathbb{R}^n_{\gamma}$ is continuous, we can construct the sheaf $\mathbf{1}_{\mathbb{R}^n}^{-1}f_*F$  on $\mathbb{R}^n_{\le}$. Similarly, if $G$ is a cosheaf on $X$ and $g:X\to \mathbb{R}^n_{\gamma^{op}}$ is continuous, we can construct the cosheaf $\mathbf{1}_{\mathbb{R}^n}^{-1}g_{\dagger}G$ on $\mathbb{R}^n_{\ge}$.}
\label{fig:5}
\end{figure}

\begin{theorem}
\label{theorem:modified_stability}
Let $f,g:X\to \mathbb{R}^n$ be set-theoretic maps. Suppose that $f,g:X\to \mathbb{R}^n_{\gamma}$ are continuous. Let $F$ be in $D^b(\cat{Sh}(X;\mathbf{k}))$. Then 
\[d_C(\mathbf{1}_{\mathbb{R}^n}^{-1}Rf_*F,\mathbf{1}_{\mathbb{R}^n}^{-1}Rg_*F)\le ||f-g||_{\infty}.\]
Dually, suppose that $f,g:X\to \mathbb{R}^n_{\gamma^{op}}$ are continuous. Let $G$ be in $D^b(\cat{CoSh}(X;\cat{k}))$. Then
\[d_C(\mathbf{1}_{\mathbb{R}^n}^{-1}Lf_{\dagger}G,\mathbf{1}_{\mathbb{R}^n}^{-1}Lg_{\dagger}G)\le ||f-g||_{\infty}.\] 
\end{theorem}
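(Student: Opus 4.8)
The plan is to imitate the proof of \cref{theorem:stability_for_direct_images}, carrying out the construction one topology up---on $\mathbb{R}^n_{\gamma}$---and then transporting everything down to the Alexandrov space $\mathbb{R}^n_{\le}$ along the continuous identity $\mathbf{1}_{\mathbb{R}^n}:\mathbb{R}^n_{\le}\to \mathbb{R}^n_{\gamma}$, whose inverse image functor is exact. I only treat the sheaf statement, the cosheaf one being dual. As before, if $||f-g||_{\infty}=\infty$ there is nothing to prove, so set $\epsilon:=||f-g||_{\infty}<\infty$, choose an injective resolution $F\to E$ in $\cat{Sh}(X;\cat{k})$, so that $Rf_*F\cong f_*E$ and $Rg_*F\cong g_*E$ as complexes of sheaves on $\mathbb{R}^n_{\gamma}$, and, using exactness of $\mathbf{1}_{\mathbb{R}^n}^{-1}$, obtain $\mathbf{1}_{\mathbb{R}^n}^{-1}Rf_*F\cong \mathbf{1}_{\mathbb{R}^n}^{-1}f_*E$ and likewise for $g$. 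It then suffices to produce an $\epsilon$-isomorphism between $\mathbf{1}_{\mathbb{R}^n}^{-1}f_*E$ and $\mathbf{1}_{\mathbb{R}^n}^{-1}g_*E$ in the sense of \cref{def:convolution_distance_sheaf}.

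The engine of the argument is the same set-theoretic containment used in \cref{theorem:stability_for_direct_images}, which I would isolate as a small observation: if $V$ is any up-set and $||f-g||_{\infty}\le\epsilon$, then $g^{-1}(V)\subseteq f^{-1}(V(-\bm{\epsilon}))$ and $f^{-1}(V)\subseteq g^{-1}(V(-\bm{\epsilon}))$, whence the chain $f^{-1}(V)\subseteq g^{-1}(V(-\bm{\epsilon}))\subseteq f^{-1}(V(-2\bm{\epsilon}))$ and its $f\leftrightarrow g$ swap. The key point is that this only uses that $V$ is an up-set, so it applies verbatim to $\gamma$-open sets $V$, and $V(-\bm{\epsilon})$ is again $\gamma$-open since translation preserves both the up-set condition and Euclidean openness.

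Working first at the level of $\gamma$-sheaves, I would define chain maps $\tilde{\Phi}:(f_*E)(-\bm{\epsilon})\to g_*E$ and $\tilde{\Psi}:(g_*E)(-\bm{\epsilon})\to f_*E$ whose component on a $\gamma$-open $V$ is, degreewise, the sheaf restriction $E^n(f^{-1}(V(-\bm{\epsilon})))\to E^n(g^{-1}(V))$ (respectively $E^n(g^{-1}(V(-\bm{\epsilon})))\to E^n(f^{-1}(V))$) supplied by the containment; here I use $(f_*E)(-\bm{\epsilon})(V)=f_*E(V(-\bm{\epsilon}))=E(f^{-1}(V(-\bm{\epsilon})))$. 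These are morphisms of complexes of $\gamma$-sheaves because every component is a restriction map, and restrictions are natural in $V$ and commute with the coboundary maps of $E$. Chasing the two composites along $f^{-1}(V)\subseteq g^{-1}(V(-\bm{\epsilon}))\subseteq f^{-1}(V(-2\bm{\epsilon}))$ shows that $\tilde{\Psi}\circ\tilde{\Phi}(-\bm{\epsilon})$ and $\tilde{\Phi}\circ\tilde{\Psi}(-\bm{\epsilon})$ are exactly the natural translation maps $(f_*E)(-2\bm{\epsilon})\to f_*E$ and $(g_*E)(-2\bm{\epsilon})\to g_*E$, since a composite of two restrictions along nested inclusions is the restriction along the outer inclusion.

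Finally I would transport the data down. Applying the exact functor $\mathbf{1}_{\mathbb{R}^n}^{-1}$ yields chain maps $\Phi:=\mathbf{1}_{\mathbb{R}^n}^{-1}\tilde{\Phi}$ and $\Psi:=\mathbf{1}_{\mathbb{R}^n}^{-1}\tilde{\Psi}$ between complexes of persistence modules. The one structural fact I need is that $\mathbf{1}_{\mathbb{R}^n}^{-1}$ commutes with the translation functor $(-)(-\bm{\epsilon})$: writing translation as the inverse image along the order-isomorphism $\sigma(x)=x-\bm{\epsilon}$ and using $\mathbf{1}_{\mathbb{R}^n}\circ\sigma=\sigma\circ\mathbf{1}_{\mathbb{R}^n}$ together with \cref{example:inverse_image_cosheaf_on_a_poset}, one gets $\mathbf{1}_{\mathbb{R}^n}^{-1}(M(-\bm{\epsilon}))\cong(\mathbf{1}_{\mathbb{R}^n}^{-1}M)(-\bm{\epsilon})$. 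Hence $\Phi$ and $\Psi$ have the required sources and targets, and since $\mathbf{1}_{\mathbb{R}^n}^{-1}$ preserves composition, the composite identities pass from $\tilde{\Phi},\tilde{\Psi}$ to $\Phi,\Psi$. Localizing to $D^b(\cat{Vect}_{\cat{k}}^{\cat{R}^n})$ and invoking \cref{lemma:thickening_alexandrov_sheaf} to rewrite $(-)(-\bm{\epsilon})$ as $\mathbf{k}[D_{\bm{\epsilon}}]*(-)$ exhibits an $\epsilon$-isomorphism, giving $d_C\le\epsilon$. I expect the main obstacle to be the bookkeeping in this last step---verifying that the construction genuinely descends along $\mathbf{1}_{\mathbb{R}^n}^{-1}$, that is, that inverse image commutes with translation (equivalently with convolution by $\mathbf{k}[D_{\bm{\epsilon}}]$) and preserves the composite conditions---rather than the geometric containment, which is elementary.
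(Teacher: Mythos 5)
Your proposal is correct and follows essentially the same route as the paper: construct the interleaving chain maps $\Phi,\Psi$ out of sheaf restriction maps along the containments $f^{-1}(V)\subseteq g^{-1}(V(-\bm{\epsilon}))\subseteq f^{-1}(V(-2\bm{\epsilon}))$ for $\gamma$-open $V$, then transport them to $\mathbb{R}^n_{\le}$ by applying $\mathbf{1}_{\mathbb{R}^n}^{-1}$ (the paper phrases this descent as taking colimits over $\gamma$-open neighborhoods of the principal up-sets $U_x$, which is the same operation). Your explicit justification that $\mathbf{1}_{\mathbb{R}^n}^{-1}$ commutes with translation is a point the paper leaves implicit, but it is the same argument.
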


\begin{proof}
The proof is analogous to the proof of \cref{theorem:stability_for_direct_images}, we sketch out the details for the sheaf case. Suppose that $||f-g||_{\infty}=\epsilon$. Note that if $U\subseteq \mathbb{R}^n$ is open in the $\gamma$-topology, then $U(a)$ is also open in the $\gamma$-topology for all $a\in \mathbb{R}^n$. Suppose that $F\to E$ is an injective resolution of $F$. We thus need to show that $d_C(\mathbf{1}_{\mathbb{R}^n}^{-1}f_*E,\mathbf{1}_{\mathbb{R}^n}^{-1}g_*E)\le \epsilon$. We can construct chain maps $\Phi$ and $\Psi$ as in the proof of \cref{theorem:stability_for_direct_images}, on the complexes $f_*E$ and $g_*E$. In particular, for $U$ open in $\mathbb{R}^n_{\gamma}$, $\Phi^n(U)$ is the sheaf restriction map $E^n(f^{-1}(U(-\bm{\epsilon})))\to  E^n(g^{-1}(U))$ and $\Psi^n(U)$ is the sheaf restriction map $E^n(g^{-1}(U(-\bm{\epsilon})))\to  E^n(f^{-1}(U))$. Note that open set $U$ in $\mathbb{R}^n_{\gamma}$ is also open in $\mathbb{R}^n_{\le}$ by construction. Thus $\mathbf{1}_{\mathbb{R}^n}^{-1}f_*E(U)=f_*E(U)$. However the principal up-sets $U_x$ for $x\in \mathbb{R}^n$ are not open in $\mathbb{R}^n_{\gamma}$. In particular,  by definition we have that $\mathbf{1}_{\mathbb{R}^n}^{-1}f_*E(U_x)$ is the complex of sheaves associated to the complex of presheaves $\colim_{U_x\subseteq V}f_*E(V)$. Since we already constructed the maps $\Phi$ and $\Psi$, applying the colimit functor gives us extensions of $\Phi$ and $\Psi$ to $\mathbf{1}_{\mathbb{R}^n}^{-1}\Phi: \mathbf{1}_{\mathbb{R}^n}^{-1}*E(-\bm{\epsilon})\to \mathbf{1}_{\mathbb{R}^n}^{-1}g_*E$ and $\mathbf{1}_{\mathbb{R}^n}^{-1}\Psi:\mathbf{1}_{\mathbb{R}^n}^{-1}g_*E(-\bm{\epsilon})\to \mathbf{1}_{\mathbb{R}^n}^{-1}f_*E$. 
Thus, the functor $\mathbf{1}_{\mathbb{R}^n}^{-1}$ then gives us chain maps $\mathbf{1}_{\mathbb{R}^n}^{-1}(\Phi)$ and $\mathbf{1}_{\mathbb{R}^n}^{-1}(\Psi)$ whose localizations give us an $\epsilon$-isomorphism between $\mathbf{1}_{\mathbb{R}^n}^{-1}f_*E$ and $\mathbf{1}_{\mathbb{R}^n}^{-1}g_*E$ by construction.
\end{proof}

This approach, of modifying the topology on the codomain, also fixes the issue in \cref{example:sheaf_direct_image_circle} making the projection maps $p:S^1\to \mathbb{R}^n_{\gamma}$ and $p:S^1\to \mathbb{R}^n_{\gamma^{op}}$ continuous. More generally, if $M$ is a manifold and a map $f: M\to \mathbb{R}^n$ is continuous (with the Euclidean topology on $\mathbb{R}^n$) or even Morse, then the maps $f:M\to \mathbb{R}^n_{\gamma}$ and $f:M\to \mathbb{R}^n_{\gamma^{op}}$ are also continuous as the Euclidean topology on $\mathbb{R}^n$ is finer than both $\mathbb{R}^n_{\gamma}$ and $\mathbb{R}^n_{\gamma^{op}}$ by construction.  Thus, a lot of examples we might care about where a particular map $f:X\to \mathbb{R}^n_{\le}$ is not necessarily continuous, the same map on the modified codomain $\mathbb{R}^n_{\gamma}$ or $\mathbb{R}^n_{\gamma^{op}}$ will be continuous.

\section{Concluding remarks}
\label{section:concluding_remarks}

In this work, we have introduced two convolution operations on the derived category of functors $\cat{P}\to \cat{Vect}_{\cat{k}}$ using the sheaf and cosheaf perspectives. Our convolution operations can also be thought of as thickening of sheaves, originally introduced in the work by Curry in \cite{curry2013sheaves} and later expanded in the derived setting by Schapira and Kashiwara in \cite{kashiwara2018persistent}. After all in the case $P=\mathbb{R}^n$, for an up-set $U$, $U(-\bm{\epsilon})$ is the  up-set $U$ thickened by $\epsilon$ in the Hausdorff distance sense. 

We defined a convolution distance and showed stability theorems for direct images of sheaves and cosheaves on $\mathbb{R}^n$ with the Alexandrov topology. Currently, we have no way of computing this distance for arbitrary complexes of persistence modules. Perhaps looking at a particular subclass of complexes the distance is computable. It might also be possible to define a matching type distance that is isometric and easier to compute analogous to the work of Berbouk and Ginot in \cite{berkouk2018derived}.

Convolution operations for sheaves of vector spaces on $\mathbb{R}^n_{\gamma}$ and cosheaves of vector spaces on $\mathbb{R}^n_{\gamma^{op}}$ seem plausible as well. It does not seem that sheaves and cosheaves on these topological spaces are isomorphic to graded modules over a graded ring, thus a graded module tensor product is not defined. However one can still define convolutions by setting $M\bullet^LN:=Ls_{\dagger}(M\boxtimes N)$ and $M*^RN:=Rs_{*}(M\boxtimes N)$. The reason why $s_!$ might not be a good candidate is that we run into the same issue of non-Hausdorffness and thus cannot apply the Beck-Chevaley Theorem (\cref{theorem:beck_chevaley}) to do computations. Thus we suspect direct images are the correct choice rather than direct images with proper supports. 

\subsection*{Acknowledgments}
We thank Nicolas Berbouk for his feedback and pointing us to relevant literature that was missed in an earlier version of this work. In particular, we were not originally aware of the history of the functor $\scHom^{\star}$ from which we drew inspiration to define right derived functor $R\scHom^{*}$. This material is based upon work supported by, or in part by, the Army Research Laboratory and the Army Research Office under contract/grant number W911NF-18-1-0307.

\appendix

\section{Sheaves}
\label{section:background_sheaves}

We introduce some notions from sheaf theory. For more details see \cite[Chapter 1]{Bredon:SheafTheory} and
\cite[Chapter 2]{kashiwara1990sheaves}. Throughout this section, $X$ is a topological space. 

Given a presheaf $F$ on $X$ there exists a sheaf $F^+$ and a morphism $\theta:F\to F^+$ such that for any sheaf $G$ the homomorphism given by $\theta$:
 \[\Hom_{\mathbf{Sh}(X)}(F^+,G)\to \Hom_{\mathbf{PSh}(X)}(F,G)\]
 is an isomorphism. In other words, $F\mapsto F^+$ is the left adjoint functor of the inclusion functor $\cat{Sh}(X)\to \cat{PSh}(X)$. Moreover, $(F^+,\theta)$ is unique up to isomorphism, and for any $x\in X$, $\theta_x:F_x\to F^+_x$ is an isomorphism. The sheaf $F^+$ is called \emph{the sheaf associated to $F$} or \emph{sheafification of $F$.}

Given an abelian group $A$, we denote by $A_X$ the sheaf associated to the presheaf $U\mapsto A$, where $U$ is open in $X$, and we say $A_X$ is the \emph{constant sheaf} on $X$ with stalk $A$.
%

Let $\mathcal{R}$ be a sheaf of rings on $X$. The pair $(X,\mathcal{R})$ is called a \emph{ringed space}. A left $\mathcal{R}$-module $M$ is a sheaf of abelian groups $M$ such that for every open $U\subset X$, $M(U)$ is a left $\mathcal{R}(U)$-module, and for any inclusion $V\subset U$, $V$ and $U$ open, the restriction morphism is compatible with the structure of the module, that is, $M(V\subset U)(sm)=\mathcal{R}(V\subset U)(s)\cdot M(V\subset U)(m)$ for every $s\in \mathcal{R}(U)$ and $m\in M(U)$. Define right $\mathcal{R}$-modules in the obvious way and morphisms between left(right) modules is a natural transformation compatible with the structure of the module. Denote these sets of natural transformations by $\text{Hom}_{\mathcal{R}}(M,N)$. We denote the category of right $\mathcal{R}$-modules by $\shModR{\mathcal{R}}$, and the category of left $\mathcal{R}$-modules by $\shRMod{\mathcal{R}}$.

  Denote by $\mathbb{Z}_X$ the sheaf associated to the constant presheaf $U\mapsto \mathbb{Z}$ for every open $U\subset X$. Then $\mathbb{Z}_{X}$-modules are precisely sheaves with values in abelian groups, i.e, $\mathbf{Mod}(\mathbb{Z}_X)=\mathbf{Sh}(X)$. More generally, define $R_X$ to be the sheaf associated to the constant presheaf $U\mapsto R$ for every open $U\subset X$.
  For example, we have the constant sheaf $\mathbf{k}_{\R^n}$.


Let $F$ be a right $\mathcal{R}$-module and $G$ be a left $\mathcal{R}$-module. Define $F\otimes_{\mathcal{R}}G$ to be the sheaf associated to the presheaf of abelian groups $U\mapsto F(U)\otimes_{\mathcal{R}(U)}G(U)$, and call $F\otimes_{\mathcal{R}}G$ the tensor product of $F$ and $G$ over $\mathcal{R}$.

\begin{definition}{\cite[Definition 2.2.11]{kashiwara1990sheaves}}
\label{def:locally_constant_sheaf}
\begin{itemize}
\item[1)] Let $A$ be an abelian group. One denotes by $A_X$ the sheaf associated to the presheaf $U\mapsto A$, for $U\subseteq X$ open, and says that $A_X$ is the \emph{constant sheaf on $X$ with stalk $A$}.
\item[2)] Let $F$ be a sheaf on $X$. One says $F$ is \emph{locally constant on} $X$ if there exists an open covering $X=\bigcup\limits_{i} U_i$ such that for each $i$, $F|_{U_i}$ is a constant sheaf.  
\end{itemize}
\end{definition}

\begin{definition}{\cite[Notation 2.3.12]{kashiwara1990sheaves}}
\label{def:external_tensor_product}
Let $p_X:X\to S$ and $p_Y:Y\to S$ be two continuous maps, and let $X\times_S Y$ be the fiber product of $X$ and $Y$ over $S$. Denote by $q_1$ and $q_2$ the projections from $X\times_SY$ to $X$ and $Y$ respectively, and by $p$ the projection $X\times_S Y\to S$. Let $\mathscr{R}$ be a sheaf of rings on $S$, let $F$ (respectively $G$) be a sheaf of $p_X^{-1}(\mathscr{R}^{op})$-modules (respectively $p_Y^{-1}\mathscr{R}$-modules). One sets:
\[F\underset{S}{\boxtimes_{\mathscr{R}}}G:=q_1^{-1}F\otimes_{p^{-1}\mathscr{R}}q_2^{-1}G\]
If there is no risk of confusion, we write $F\boxtimes_S G$, and if $S$ is the one point space, we simply write $F\boxtimes G$.
The sheaf $F\boxtimes G$ is called the \emph{external tensor product} of $F$ and $G$ (over $S$).
\end{definition}

\begin{definition}
\label{def:support_of_a_section}
Let $F$ be a sheaf on $X$. Define the \emph{support of a section} $s$ of $F$ on an open set $U$ as the complement in $U$ of the union of open sets $V\subseteq U$ such that the restriction of $s$ on $V$ is $0$. Denote this set by $\text{supp}(s)$. More explicitly, we have $\text{supp}(s)=\{x\in U\,|\, s_x=0\}$.
\end{definition}

\begin{definition}{\cite[Section 2.5]{kashiwara1990sheaves}}
\label{def:direct_image_with_compact_supports}
Let $f:Y\to X$ be continuous. Recall that $f$ is \emph{proper} if $f$ is closed and its fibers are relatively Hausdorff (two distinct points in the fiber have disjoint neighborhoods in $Y$) and compact. If $X$ and $Y$ are locally compact, $f$ is proper if and only if the inverse image of any compact subset of $X$ is compact. Let $G$ be a sheaf on $Y$. Let $f_!G$ be the subsheaf of $f_*G$ defined by:
\[\Gamma(U;f_!G):=\{s\in \Gamma(f^{-1}(U);G)\,|\, f:\text{supp}(s)\to U\, \text{is proper}\}\]
for all $U\subseteq X$ open. This sheaf is called the \emph{direct image with proper supports} of $G$.
We also define 
\[\Gamma_c(U;F)=\{s\in \Gamma(U;F)\,|\, \text{supp}(s)\, \text{is compact and Hausdorff}\}\]
\end{definition}

\begin{theorem}{\cite[Theorem 2.3.26]{dimca2004sheaves}}[Beck-Chevaley or Proper Base Change Theorem]
\label{theorem:beck_chevaley}
Let $X$ and $Y$ be locally compact Hausdorff spaces, $f:Y\to X$ a continuous map and $G$ a sheaf on $Y$. Then for all $x\in X$, there is a canonical isomorphism
\[(f_!G)_x\to \Gamma_c(f^{-1}(x);G|_{f^{-1}(x)})\]
Moreover, we also have a canonical isomorphism
\[(R^nf_!G)_x\cong R^n\Gamma_c(f^{-1}_x;G|_{f^{-1}(x)})\]
\end{theorem}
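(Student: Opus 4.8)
The plan is to prove the non-derived stalk formula first and then bootstrap to the derived statement using a $c$-soft resolution. The natural candidate map is restriction of sections to the fiber. By definition of the stalk as a filtered colimit over open neighborhoods $U\ni x$,
\[(f_!G)_x \cong \colim_{U\ni x}\Gamma(U;f_!G),\]
and each section $s\in\Gamma(U;f_!G)\subseteq\Gamma(f^{-1}(U);G)$ has support on which $f$ is proper. Restricting $s$ to $f^{-1}(x)=\bigcap_{U\ni x}f^{-1}(U)$ yields a section of $G|_{f^{-1}(x)}$ whose support is $\mathrm{supp}(s)\cap f^{-1}(x)$, a closed subset of $\mathrm{supp}(s)$ that is proper over the single point $x$ and hence compact. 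This defines a morphism $(f_!G)_x\to\Gamma_c(f^{-1}(x);G|_{f^{-1}(x)})$, and the whole proof amounts to showing it is a canonical isomorphism.

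First I would verify the isomorphism in the non-derived case. For injectivity, suppose $s$ restricts to $0$ on $f^{-1}(x)$; then $\mathrm{supp}(s)\cap f^{-1}(x)=\emptyset$, so $x\notin f(\mathrm{supp}(s))$, which is closed in $U$ because $f$ is closed on $\mathrm{supp}(s)$. Shrinking $U$ to the open set $U\setminus f(\mathrm{supp}(s))$ kills $s$ in the colimit. For surjectivity, given $t\in\Gamma_c(f^{-1}(x);G|_{f^{-1}(x)})$ with compact support $K\subseteq f^{-1}(x)$, I would use local compactness of $Y$ and a tube-lemma argument to produce a neighborhood $U\ni x$ and an extension $s\in\Gamma(f^{-1}(U);G)$ restricting to $t$, then cut down its support (by choosing a relatively compact open tube around $K$) so that $f|_{\mathrm{supp}(s)}$ is proper over $U$. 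The hard part will be exactly this surjectivity step: producing a proper-support extension out of germs along the compact fiber-support uses the local compactness and Hausdorff hypotheses in an essential way, and it is the reason the theorem fails for the Alexandrov topology on $\mathbb{R}^n_{\le}$ (as the paper notes, $U_a\cap U_b\neq\emptyset$ always, so fibers are never Hausdorff).

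For the derived statement I would pass to a resolution $G\to I^\bullet$ by $c$-soft sheaves, which are acyclic both for $f_!$ and for $\Gamma_c$, so that $R^nf_!G=H^n(f_!I^\bullet)$ and $R^n\Gamma_c(f^{-1}(x);G|_{f^{-1}(x)})=H^n(\Gamma_c(f^{-1}(x);I^\bullet|_{f^{-1}(x)}))$. Two auxiliary facts are needed: that $c$-softness is preserved under restriction to the locally closed fiber $f^{-1}(x)$, and that the non-derived formula applies termwise to each $I^n$. Since taking the stalk at $x$ is a filtered colimit and therefore exact, it commutes with $H^n$, giving
\[(R^nf_!G)_x \cong H^n\bigl((f_!I^\bullet)_x\bigr) \cong H^n\bigl(\Gamma_c(f^{-1}(x);I^\bullet|_{f^{-1}(x)})\bigr) \cong R^n\Gamma_c(f^{-1}(x);G|_{f^{-1}(x)}),\]
which is the asserted isomorphism.

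The remaining bookkeeping — naturality of the restriction map in $x$ and $G$, and independence of the identifications from the chosen $c$-soft resolution — is routine and follows from the universal properties of derived functors. The genuinely substantive point is the surjectivity in the stalk formula, where compactness of the fiber-support and the tube-lemma extension are indispensable; that is the step that would occupy most of a full write-up.
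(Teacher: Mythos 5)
The paper offers no proof of this statement at all: it is quoted verbatim as \cite[Theorem 2.3.26]{dimca2004sheaves} in the appendix and is used in the body only to explain why $s_*$ rather than $s_!$ appears in the convolution definitions (the Alexandrov topology on $\mathbb{R}^n$ is not Hausdorff, so the theorem does not apply there). So there is nothing in the paper to compare your argument against; what you have written is, in outline, the standard textbook proof (essentially Kashiwara--Schapira, Propositions 2.5.2 and 2.6.7). Your map, the injectivity argument via closedness of $f(\mathrm{supp}(s))$, and the derived bootstrap through a $c$-soft resolution (using that $c$-softness passes to the locally closed fiber, that $c$-soft sheaves are acyclic for both $f_!$ and $\Gamma_c$, and that stalks are exact) are all correct. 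The one place where your write-up is genuinely incomplete is the one you flag yourself: surjectivity. Beyond the tube lemma, the extension $\tilde t$ of $t$ from a neighborhood of $K$ in $f^{-1}(x)$ to an open $W\subseteq Y$ will not vanish off a compact set, so you cannot glue it with zero directly; you must first observe that $\mathrm{supp}(\tilde t)\cap(\overline V\setminus V)$ is compact and disjoint from $f^{-1}(x)$ for a relatively compact tube $V$, push it forward, and shrink $U$ to avoid its image before gluing --- this is also where you need $\Gamma(K;G|_K)\cong\colim_{W\supseteq K}\Gamma(W;G)$ for compact $K$, which itself uses the Hausdorff hypothesis. With that step spelled out the proof is complete.
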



\printbibliography
\end{document}